\documentclass[reqno,11pt]{amsart}
\textheight22cm \topmargin-0.3cm \oddsidemargin7mm
\evensidemargin7mm \textwidth14cm \headsep0.8cm \headheight0.4cm
\numberwithin{equation}{section}
\usepackage{amsmath}
\usepackage{esint} 
\usepackage{amsthm}
\usepackage{epsfig}
\usepackage{psfrag}
\usepackage{graphicx}
\usepackage{graphpap,latexsym,epsf}
\usepackage{color}
\usepackage{amssymb,mathrsfs,enumerate}
\definecolor{citegreen}{rgb}{0,0.6,0}
\definecolor{refred}{rgb}{0.8,0,0}
\usepackage[colorlinks, citecolor=citegreen, linkcolor=refred]{hyperref}
\newcommand{\R}{\mathbb{R}}

\newcommand{\Sph}{\mathbb{S}}
\def\DDD{{\rm D}}
\def\GGG{{\rm G}}
\def\HHH{{\rm H}}
\def\RRR{{\mathrm R}}
\def\WWW{{\mathrm W}}
\def\SSS{{\rm S}}
\def\EEE{{\rm E}}
\def\TTT{\stackrel{\circ}{\rm R}}

\newcommand{\pa}{\partial}
\newcommand{\na}{\nabla}
\newcommand{\Om}{\Omega}
\newcommand{\De}{\Delta}

\newcommand{\ep}{\varepsilon}
\newcommand{\Rm}{{\rm Rm}}
\newcommand{\Ric}{{\rm Ric}}
\newcommand{\rmd}{{\rm d}}

\mathchardef\emptyset="001F

\definecolor{vgreen}{rgb}{0.1,0.5,0.2}
\definecolor{viola}{RGB}{85,26,139}
\renewcommand{\theequation}{\thesection.\arabic{equation}}

\newtheorem{theorem}{Theorem}[section]
\newtheorem{remark}[theorem]{Remark}
\newtheorem{corollary}[theorem]{Corollary}

\newtheorem{proposition}[theorem]{Proposition}

\newtheorem{lemma}[theorem]{Lemma}


\addtolength{\textwidth}{2.4cm}
\addtolength{\textheight}{2cm}
\addtolength{\voffset}{-1cm}
\hoffset=-19pt
\addtolength{\footskip}{1cm}


\begin{document}
\title[Riemannian aspects of potential theory]
{Riemannian aspects of potential theory}


\author[V.~Agostiniani]{Virginia Agostiniani}
\address{V.~Agostiniani, Mathematical Institute\\
Woodstock Road\\
Oxford OX2 6GG, UK
} 
\email{Virginia.Agostiniani@maths.ox.ac.uk}

\author[L.~Mazzieri]{Lorenzo Mazzieri}
\address{L.~Mazzieri, Scuola Normale Superiore di Pisa\\
Piazza Cavalieri 7\\
56126 Pisa, Italy
}
\email{l.mazzieri@sns.it}


\begin{abstract} 
In this paper we provide a new method for establishing the rotational symmetry of the solutions to a couple of very classical overdetermined problems arising in potential theory, in both the exterior and the interior punctured domain. 
Thanks to a conformal reformulation of the problems, we obtain Riemannian manifolds with zero Weyl tensor satisfying a quasi-Einstein type equation. Exploiting these geometric properties, we conclude via a splitting argument that the manifolds obtained are half cylinders. 
In turn, the rotational symmetry of the potential is implied.
To the authors' knowledge, some of the overdetermining conditions considered here are new.
\end{abstract}

\maketitle

\noindent\textsc{MSC (2010): 35N25, 
53C21,
35B06, 
31B15.} 

\smallskip
\noindent\keywords{\underline{Keywords}: overdetermined boundary value problems, conformal geometry, electrostatic potential, splitting theorem.} 

\date{\today}

\maketitle


\section{Introduction and statement of the results}
\label{s:intro}
In this paper we deal with two classical elliptic boundary value problems from potential theory, namely
\begin{equation}
\label{eq:problemi}
\left\{
\begin{array}{rcll}
\displaystyle
-\Delta u\!\!\!\!&=&\!\!\!\!0 & {\rm in }\quad\R^n\setminus\overline\Omega,\\
\displaystyle
  u\!\!\!\!&=&\!\!\!\!c  &{\rm on }\ \ \pa\Om,\\
\displaystyle
u(x)\!\!\!\!&\to&\!\!\!\!0 & \mbox{as }\ |x|\to+\infty,
\end{array}
\right.
\qquad\mbox{and}\qquad
\left\{
\begin{array}{rcll}
\displaystyle
-\De u\!\!\!\!&=&\!\!\!\!0 & {\rm in }\quad\Om\setminus\{0\},\\
\displaystyle
  u\!\!\!\!&=&\!\!\!\!c  &{\rm on }\ \ \pa\Om,\\
\displaystyle
u(x)\!\!\!\!&\to&\!\!\!\!+\infty & \mbox{as }\ x\to0.
\end{array}
\right.
\end{equation}
Here and throughout this paper, $\Om\subset\R^n$, $n\geq3$, is a bounded domain with smooth boundary which contains the origin.
It is well known that if $\Om$ is the ball $B_{r_0}$ of radius $r_0$ and centred at zero, then the function 
\begin{equation*}
u_0(x)=\frac{|x|^{2-n}}{(n-2)|\Sph^{n-1}|}
\end{equation*}
satisfies both problems with $c=r_0^{2-n}/(n-2)|\Sph^{n-1}|$.
Here, $|\Sph^{n-1}|$ is the area of the $(n-1)$-dimensional unit sphere.
In the following we will revisit for both problems some classical results where the radial symmetry of the solution, and thus of the domain, is derived, provided some extra assumptions are imposed so that the problems become overdetermined. It is worth pointing out that the literature on this subject is vaste so that giving an exhausted list of the contributors is out of reach. Here, we limit ourselves to mention the results that are more related to the subsequent discussion:
\cite{Pay_Phi_1,Ga_Sa,Fra_Ga_2008,Cra_Fra_Ga} for the exterior domain,
\cite{Pay_Sch,Enc_Per} for the interior punctured domain.

In order to accomplish our programme, we propose an unifying point of view, which relies on a suitable reformulation of the problems in the framework of Riemannian geometry. In particular, we are going to exploit the conformal structure of the domains under consideration to obtain an equivalent formulation of the problems which is suitable to be treated via splitting methods. We refer the reader to the recent work \cite{Farina} for other interesting applications of splitting principles in the study of overdetermined problems.
   
To describe our approach in more detail, we set some notation and recall some basic geometric facts.
Throughout this paper, we let $g_{\R^n}$ denote the \emph{Euclidean metric} of $\R^n$. Recall that, in $\R^n\setminus\{0\}$, the metric
$g_{\R^n}$ can be written as
$
dr\otimes dr+r^2g_{\Sph^{n-1}},
$ 
where $g_{\Sph^{n-1}}$ is the metric induced by $g_{\R^n}$ on the unit sphere $\Sph^{n-1}$ and $r>0$ represents the Euclidean distance to the origin.
The \emph{cylindrical metric} on $\R^n\setminus\{0\}$ is instead given by
$
g_{cyl}=r^{-2}dr\otimes dr+g_{\Sph^{n-1}}
$  
and it is related to $g_{\R^n}$ by the conformal factor $r^{-2}$.
Note that, setting $t:=\log r$, one obtains the expression
\[
g_{cyl}=r^{-2}g_{\R^n}=dt\otimes dt+g_{\Sph^{n-1}}.
\]
Geometrically, the variable $t$ can be viewed as the signed cylindrical distance to the hypersurface $\{r=1\}$ or, equivalently, to $\{t=0\}$.
It is clear from this expression that the cylindrical metric is a product metric.
To describe the heuristic of our approach, let us remark that when $\Om$ is the ball $B_{r_0}$ and $u_0$ is the radial solution introduced above, the conformal change which relates the Euclidean metric to the cylindrical one can be written in terms of $u_0$. More precisely,
\[
g_{cyl}=\big((n-2)|\Sph^{n-1}|\,u_0\big)^{\frac2{n-2}}\,g_{\R^n}.
\] 
In general, without assuming a priori that $\Om$ is a ball, we take the solution $u$ to one of our problems and we build the ansatz metric
\[
g=u^{\frac2{n-2}}\,g_{\R^n}
\]
on the manifold with boundary $M$, which eventually corresponds to $\R^n\setminus\Om$ or to $\overline\Om\setminus\{0\}$.
We aim to show that under suitable extra conditions (see, for example, the hypotheses of Theorems \ref{thm:Lore_Virgi} and Theorem  \ref{thm:nabla_const} below) the metric $g$ is the cylindrical metric, up to a multiplicative constant, so that $(M,g)$ is isometric to one half round cylinder. Once this has been established, the rotational symmetry of $u$ follows at once. 

The advantage of this strategy is that one can divide the problem into two steps which can be treated by means of very general principles. The first step consists in proving that $g$ is the Riemannian product of a half line with a level set of $u$. To do this we observe that the metric $g$ and the function $f:=\log u$ satisfy in $M$ the coupled system
\begin{equation}
\label{eq:coupled}
{\rm Ric}_g+\na^2f+\frac{df\otimes df}{n-2}
\,\, = \,\,\,\frac{|\na f|^2_{ g}}{n-2} \, g\qquad\mbox{and}\qquad\Delta_gf=0,
\end{equation}
consisting of a quasi-Einstein type equation and the prescription that $f$ is $g$-harmonic.
We refer the reader to Section \ref{sec:reformulation} for the notation and the computations pertaining to the conformal change.
It is interesting to notice how this formulation suggests a connection between this problem and the static vacuum Einstein equations in General Relativity
\begin{equation*}
f\,{\rm Ric}_g-\na^2f\,\,=\,\,0
\qquad\mbox{and}\qquad\Delta_gf=0,
\end{equation*}
for which analogous rigidity issues arise. To give an example, we just mention the well-known Black Hole Uniqueness Theorem (see \cite{Anderson_book} for a mathematical presentation).
  
Combining the relations in (\ref{eq:coupled}) with the Bochner identity, we deduce that $|\na f|_g^2$ obeys the PDE  
\[
\Delta_g|\na f|_g^2+\big\langle\na|\na f|^2_g\,\big|\,\na f\big\rangle_g
=\,\,2|\na^2f|^2_g.
\]
We then perform in Section \ref{sec:int_per_parti} an analysis of this equation in several situations with the help of different overdetermining conditions and we conclude in each case that $|\na f|_g^2$ is constant and in turn that $\na f$ is a nontrivial parallel vector field. This implies that $g$ has the desired product structure, by a general Riemannian splitting principle. This is described in Section \ref{sec:dim_thm}, where all our results are finally proved. 
We notice en passant that $f$ is an affine function of the $g$-distance to $\pa M$.  
This gives a geometric interpretation of the quantity 
\begin{equation}
\label{eq:gradienti_u_f}
\frac{|\DDD u|^2}{u^{2\frac{n-1}{n-2}}}=|\na f|^2_g\,,
\end{equation}
which is known in the literature as the $P$-function associated with the electrostatic potential (see \cite{Pay_Phi_1}).
In other words, the square root of the $P$-function is the gradient-length of the $g$-distance to $\pa M$, up to a constant factor. 

The second step of our strategy amounts to showing that the fibers of the product are isometric to constant curvature spheres.
Here the key ingredient is the fact that the Weyl tensor of the metric $g$ is always zero, so that we can argue in the same spirit as in \cite{Ca_Ma_Ma_Ri}, where a general study of locally conformally flat quasi-Einstein manifolds is addressed. It is worth mentioning that similar results were first obtained for locally conformally flat gradient Ricci solitons in \cite{Cao_Chen,Cat_Man}.

We now pass to enlist in more detail the results which we have obtained implementing the technique described above.
We are aware of the fact that the overdetermining conditions we deal with are not optimal, but at this stage our aim is more to 
illustrate in a case study a symmetrization technique, 
which is not based neither on the Alexandroff-Serrin method (see \cite{Serrin_ARMA,Reich_1996,Reich_ARMA,Af_Bu_ARMA}) nor on the isoperimetric inequality (see \cite{Pay_Sch,Ga_Sa,Enc_Per}), and which gives some geometrical insights into the method of integral identities introduced by Weinberger, Payne, and Philippin
(see \cite{Wei_ARMA,Pay_Phi_1}). 
We believe that our approach can be applied to other type of equations and boundary conditions, provided that a satisfactory description of the rotationally symmetric solution is available.  
This will be the object of future investigations.
    
\medskip    
We first consider the problem in the exterior domain
\begin{equation}
\label{eq:pb}
\left\{
\begin{array}{rcll}
\displaystyle
-\De u\!\!\!\!&=&\!\!\!\!0 & {\rm in }\quad\R^n\setminus\overline\Om,\\
\displaystyle
  u\!\!\!\!&=&\!\!\!\!1  &{\rm on }\ \ \pa\Om,\\
\displaystyle
u(x)\!\!\!\!&\to&\!\!\!\!0 & \mbox{as }\ |x|\to\infty,
\end{array}
\right.
\end{equation}
where the Dirichlet boundary condition has been normalized to $1$ for sake of simplicity.
It is worth pointing out that for every $0<c\leq1$ the level set $\{u=c\}$ is compact, due to the properness of $u$. 
Moreover, as it is noted in Remark \ref{rm:diffeo}, 
we have that $\{u=c\}$ is diffeomorphic to a 
$(n-1)$-dimensional sphere, and thus connected, 
for every $c>0$ sufficiently small.

Our first result shows that the rotational symmetry of the potential 
$u$ can be deduced provided a suitable integral inequality is 
satisfied on some level set of $u$.
To the authors' knowledge, such an overdetermining condition 
as well as its weaker versions (see conditions (\ref{eq:cond_Lore_Virgi_cor_cap}) and (\ref{eq:cond_Lore_Virgi_cor}) below)
have not yet been considered in the literature.

\begin{theorem}
\label{thm:Lore_Virgi}
Let $u$ be the unique solution to problem (\ref{eq:pb}).
Assume that, for some $0<c\leq1$,
the level set $\{u=c\}$ is regular and it holds
\begin{equation}
\label{eq:cond_Lore_Virgi}
\int\limits_{\{u=c\}}|\DDD u|^2\left[\frac{\HHH}{n-1}-
\frac{|\DDD u|}{(n-2)\,u}\right]\rmd\sigma\leq0,
\end{equation}
where $\HHH$ is the mean curvature of the hypersurface $\{u=c\}$ with respect to the normal pointing towards the interior of $\{u\leq c\}$. 
Then $\Om$ is a ball and $u$ is rotationally symmetric.
\end{theorem}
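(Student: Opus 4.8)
I would follow the two-step scheme of the Introduction, realising the first step through a single integral identity on the region exterior to the level set $\{u=c\}$.

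\emph{Setup.} Let $M$ be the unbounded connected component of $\{u\leq c\}$, which by the regularity assumption on the value $c$ is a smooth manifold with compact boundary $\pa M=\{u=c\}$; endow it with the ansatz metric $g=u^{\frac{2}{n-2}}\,g_{\R^n}$ and set $f=\log u$, so that by the computations of Section \ref{sec:reformulation} the pair $(g,f)$ satisfies the system (\ref{eq:coupled}) and the Weyl tensor of $g$ vanishes on $M$. Putting $w:=|\na f|^2_g$ and combining (\ref{eq:coupled}) with the Bochner formula (using $\Delta_g f=0$) one gets, as recalled above,
\[
\Delta_g w+\langle\na w\,,\na f\rangle_g\,=\,2\,|\na^2 f|^2_g\,\geq\,0 ,
\]
i.e. $L_f w\geq 0$, where $L_f:=\Delta_g+\langle\na f\,,\na(\,\cdot\,)\rangle_g$ is self-adjoint with respect to the weighted measure $e^f\rmd\mu_g$, $\rmd\mu_g$ being the volume element of $g$.

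\emph{Step 1: the integral identity forces $\na^2 f\equiv 0$.} This is the heart of the argument, and is the specialisation to the present case of the analysis of Section \ref{sec:int_per_parti}. Integrating $L_f w=2|\na^2 f|^2_g$ against $e^f\rmd\mu_g$ over $M\cap B_R$ and applying the divergence theorem, I would obtain that $\int_{M\cap B_R}2\,|\na^2 f|^2_g\,e^f\rmd\mu_g$ equals the flux of $e^f\na w$ across $\{u=c\}$ plus the flux across $M\cap\pa B_R$, with $\nu$ the $g$-unit normal pointing out of $M$, i.e. towards increasing $u$. On $\{u=c\}$ one has $\nu=\na f/|\na f|_g$, hence $\langle\na w,\nu\rangle_g=2\,|\na f|_g\,\na^2 f(\nu,\nu)$; since $\Delta_g f=0$ and $f$ is constant on $\{u=c\}$, the level-set identity $\Delta_g f=\na^2 f(\nu,\nu)+\HHH_g|\na f|_g$ gives $\na^2 f(\nu,\nu)=-\HHH_g|\na f|_g$, with $\HHH_g$ the $g$-mean curvature of $\{u=c\}$ with respect to $\nu$. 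Expressing $\HHH_g$, the element $\rmd\sigma_g$ and $|\na f|^2_g=\frac{|\DDD u|^2}{u^{2\frac{n-1}{n-2}}}$ in Euclidean terms through the conformal factor $u^{\frac1{n-2}}$ — and here the conformal transformation rule for the mean curvature and the orientation of the normal must be tracked carefully — a direct computation identifies the boundary integral over $\{u=c\}$ with a \emph{positive} constant times
\[
\int\limits_{\{u=c\}}|\DDD u|^2\left[\frac{\HHH}{n-1}-\frac{|\DDD u|}{(n-2)\,u}\right]\rmd\sigma ,
\]
which is $\leq 0$ by the hypothesis (\ref{eq:cond_Lore_Virgi}). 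On the other hand the flux across $M\cap\pa B_R$ tends to $0$ as $R\to\infty$, by the asymptotic expansion of the harmonic function $u$ near infinity together with the attendant estimates on $\DDD u$ and $\DDD^2 u$. Letting $R\to\infty$ yields $\int_M|\na^2 f|^2_g\,e^f\rmd\mu_g\leq 0$, hence $\na^2 f\equiv 0$ on $M$; equivalently $w=|\na f|^2_g$ is a positive constant (and, a posteriori, equality holds in (\ref{eq:cond_Lore_Virgi}) with pointwise vanishing integrand).

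\emph{Step 2: splitting and fiber rigidity.} Since $u$ is non-constant, $\na f$ is a parallel, nowhere-vanishing vector field on the connected manifold $M$, and a short computation with the conformal factor shows that $(M,g)$ is complete in the direction of $\na f$ (the $g$-distance to Euclidean infinity is infinite). By the Riemannian splitting principle of Section \ref{sec:dim_thm}, $(M,g)$ is then isometric to $\big([0,+\infty)\times N,\ \rmd t\otimes\rmd t+g_N\big)$, with $(N,g_N)$ the level set $\{u=c\}$ carrying the metric induced by $g$ and $f$ affine in $t$. Since $g$ is locally conformally flat, the quasi-Einstein structure (\ref{eq:coupled}) forces, by the analysis of \cite{Ca_Ma_Ma_Ri}, the cross-section $(N,g_N)$ to be a space form; being compact and diffeomorphic to $\Sph^{n-1}$, it is a round sphere. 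Hence $(M,g)$ is a half round cylinder, i.e. $g$ is a constant multiple of $g_{cyl}$ on $M$; unravelling the conformal change shows that $u$ is rotationally symmetric on $M$, whence on all of $\R^n\setminus\ove\Om$ by unique continuation for harmonic functions. Therefore $\pa\Om=\{u=1\}$ is a round sphere and $\Om$ is a ball.

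\emph{Main obstacle.} The delicate point is the integral identity of Step 1: one has to arrange the integration by parts so that the boundary term on $\{u=c\}$ is \emph{exactly} the quantity appearing in (\ref{eq:cond_Lore_Virgi}) — which demands precise bookkeeping of the conformal rules for mean curvature and surface element and of the sign of the normal — while simultaneously showing that the flux at infinity is negligible, using the behaviour of $u$ as $|x|\to\infty$. The fiber-rigidity part of Step 2 is essentially imported from \cite{Ca_Ma_Ma_Ri}; the only extra care needed there is to ensure that the splitting produces a genuine half-cylinder rather than an incomplete or degenerate object, which the completeness computation takes care of.
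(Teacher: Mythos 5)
Your proposal is correct and follows essentially the same route as the paper: the weighted Bochner integration by parts with weight $e^f$ is exactly the paper's choice $\varphi(f)=f$ in Proposition \ref{prop:int_part_1}, the identification of the boundary term with the quantity in (\ref{eq:cond_Lore_Virgi}) is identity (\ref{eq:ipotesi_riscritta}), the decay of the flux at infinity is handled by the same asymptotic estimates on $u$, $\DDD u$, $\DDD^2 u$, and your Step 2 is Theorem \ref{prop:LCF_sfera} followed by the same analyticity/unique-continuation argument. The only cosmetic differences are the exhaustion by Euclidean balls $B_R$ instead of by the sublevel sets $\{u>\ep\}$, and your (unnecessary) restriction to the unbounded component of $\{u\le c\}$, which the paper avoids since connectedness is not needed for the argument.
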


Let us briefly mention that the regularity of the level set
required in the previous statement is a condition
which ensures that the normal is well-defined all over the 
hypersurface and allows us to safely use the Divergence Theorem 
in the proof. 
However, while we refer the reader to Remark \ref{rm:diffeo}
for some comments about the regularity of the level sets of $u$,
we point out that the thesis of this theorem and of
the related corollaries
still holds when the level set considered is not regular.
We will address this issue with more detail in future work.
Also, let us note that we do not require the connectedness of the
level set where condition \eqref{eq:cond_Lore_Virgi} is fulfilled. 
Indeed, a perusal of the proof shows that
our argument does not rely on the connectedness of
$\{u=c\}$ and, in particular, on the connectedness of $\pa\Om$.
This information can be deduced a posteriori, 
as a consequence of \eqref{eq:cond_Lore_Virgi}.
Indeed, this integral condition is exploited in the proof
to show that $|\na f|_g$ is a positive constant and hence, in view
of \eqref{eq:gradienti_u_f}, that all the level sets $\{u=t\}$
for $0<t\leq c$ are regular. In particular, these level sets
are diffeomorphic to each other and hence connected,
since $\{u=c\}$ is diffeomorphic to a sphere for 
every $c>0$ small enough, as already observed.
 
By means of the coarea formula and Sard's Lemma, one can show that
an analogous ``sphere theorem'' holds if 
the slice-wise condition (\ref{eq:cond_Lore_Virgi}) 
is replaced by a global integral condition. 
This is the content of the following corollary.

\begin{corollary}
\label{cor:cond_glob}
Let $u$ be the unique solution to problem (\ref{eq:pb})
and suppose that 
\begin{equation}
\label{eq:cond_Phi}
\frac{\Phi(1)}{\displaystyle\int_0^1\Phi(c)\,\rmd c}
\,\leq\,
2\,\Big(\frac{n-1}{n-2}\Big),
\qquad
\mbox{with}
\quad
\Phi(c):=\int\limits_{\{u=c\}}\!\frac{|\DDD u|^3}u\,\rmd\sigma.
\end{equation} 
Then $\Om$ is a ball and $u$ is rotationally symmetric.
\end{corollary}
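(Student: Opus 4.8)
\textbf{Proof plan for Corollary \ref{cor:cond_glob}.}
The strategy is to deduce the global condition \eqref{eq:cond_Phi} from the slice-wise condition \eqref{eq:cond_Lore_Virgi} of Theorem \ref{thm:Lore_Virgi} holding on \emph{some} regular level set. More precisely, I would argue by contradiction: assuming the conclusion fails, Theorem \ref{thm:Lore_Virgi} forces the reverse (strict) inequality
\[
\int\limits_{\{u=c\}}|\DDD u|^2\left[\frac{\HHH}{n-1}-\frac{|\DDD u|}{(n-2)\,u}\right]\rmd\sigma \,>\,0
\]
for every regular value $c\in(0,1]$. By Sard's Lemma almost every $c\in(0,1]$ is a regular value, so this integral inequality holds for a.e.\ $c$. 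The first step is then to rewrite the boundary integral of the mean curvature term as a derivative in $c$ of the quantity $\Phi(c)$, using the divergence structure of $\Delta u=0$.

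The key computation is the following: on a regular level set $\{u=c\}$ the outward unit normal (pointing towards larger $u$, i.e.\ towards the interior of $\{u\le c\}$ after a sign check) is $\nu = \DDD u/|\DDD u|$, and for any $\R^n$-valued or scalar flux one has the coarea identity $\frac{d}{dc}\int_{\{u>c\}}\!F\,|\DDD u|\,\rmd x = -\int_{\{u=c\}}F\,\rmd\sigma$ type relations. Applying the Divergence Theorem to the vector field $|\DDD u|^2\,\DDD u / u$ on the region $\{c_1 < u < c_2\}$ (or directly computing $\Delta$ of an appropriate scalar), and recalling that $\Delta u = 0$ implies $\mathrm{div}(\DDD u)=0$ together with the Bochner-type identity already invoked in the paper, one expresses $\Phi'(c)$ in terms of exactly the integrand appearing in \eqref{eq:cond_Lore_Virgi}. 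I expect the outcome to be an identity of the shape
\[
\Phi'(c) \;=\; \frac{1}{c}\,\Phi(c)\;-\;\frac{2(n-1)}{n-2}\,\frac{1}{c}\!\!\int\limits_{\{u=c\}}\!\!\frac{|\DDD u|^3}{u}\,\rmd\sigma \;+\;(\text{const})\!\!\int\limits_{\{u=c\}}\!\!|\DDD u|^2\Big[\tfrac{\HHH}{n-1}-\tfrac{|\DDD u|}{(n-2)u}\Big]\rmd\sigma,
\]
or something reducible to it, so that the a.e.\ positivity of the slice integral translates into a differential inequality for $\Phi$ of the form $c\,\Phi'(c) \ge \text{const}\cdot\Phi(c)$ (with the appropriate sign). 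The honest determination of this ODE — getting every constant and sign right, and handling the fact that $\HHH$ appears through the second fundamental form of the level sets — is the technical heart of the argument.

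Once such a differential inequality is in hand, the second step is purely one-dimensional. Integrating $c\,\Phi'(c)\ge \alpha\,\Phi(c)$ with $\alpha = 2(n-1)/(n-2)$ over $(0,1]$, or equivalently examining the monotonicity of $c^{-\alpha}\Phi(c)$, yields a lower bound on $\Phi(1)$ in terms of $\int_0^1 \Phi(c)\,\rmd c$; an elementary estimate (integrating $\Phi(c)=\Phi(1)\,c^{\alpha}\cdot(\text{something}\le 1)$, or a direct application of the mean-value/Chebyshev-type inequality for monotone quotients) gives precisely
\[
\frac{\Phi(1)}{\displaystyle\int_0^1\Phi(c)\,\rmd c}\;>\;2\Big(\frac{n-1}{n-2}\Big),
\]
contradicting the hypothesis \eqref{eq:cond_Phi}. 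Therefore the assumption was false, Theorem \ref{thm:Lore_Virgi} applies on some regular level set, and $\Omega$ is a ball with $u$ rotationally symmetric.

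The main obstacle I anticipate is twofold: first, justifying the differentiation under the integral sign and the use of the Divergence Theorem across the (a.e.-regular but possibly non-regular on a null set) level sets — this is where the coarea formula and Sard's Lemma enter, exactly as the text announces; and second, pinning down the exact coefficient $2(n-1)/(n-2)$ in the ODE so that it matches the constant in \eqref{eq:cond_Phi}. A minor subtlety is the behaviour as $c\to 0^+$: one must check that $\Phi(c)$ and $\int_0^c\Phi$ are controlled there (using that $u$ decays like $|x|^{2-n}$ at infinity, so $|\DDD u|\sim|x|^{1-n}$ and the integrals converge), so that the boundary term at $c=0$ in the integration-by-parts vanishes and does not spoil the final inequality.
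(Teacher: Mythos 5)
Your plan is correct in substance and lands on the right constant, but it is the ``differentiated'' version of what the paper does in ``integrated'' form, and the key identity you display is not the right one. The paper applies the coarea formula once to convert $\int_0^1\!\int_{\{u=c\}}|\DDD u|^2\big[\HHH/(n-1)-|\DDD u|/((n-2)u)\big]\rmd\sigma\,\rmd c$ into a single bulk integral over $\R^n\setminus\Om$, evaluates $\int_{\R^n\setminus\Om}\DDD^2u(\DDD u,\DDD u)\,\rmd\mu=\tfrac12\int_{\pa\Om}|\DDD u|^3\rmd\sigma$ by one application of the Divergence Theorem, and observes that hypothesis \eqref{eq:cond_Phi} is exactly the statement that this bulk integral is nonpositive; a mean-value argument then produces one regular value $c$ at which \eqref{eq:cond_Lore_Virgi} holds, and Theorem \ref{thm:Lore_Virgi} finishes. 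Your contrapositive/ODE route works too, but the identity you should be aiming for is, with $\Psi(c):=\int_{\{u=c\}}|\DDD u|^3\rmd\sigma=c\,\Phi(c)$,
\begin{equation*}
\Psi'(c)\,=\,2\!\!\int\limits_{\{u=c\}}\!\!|\DDD u|^2\,\HHH\,\rmd\sigma
\qquad\text{for a.e. }c\in(0,1),
\end{equation*}
which follows from $\mathrm{div}(|\DDD u|^2\DDD u)=2\DDD^2u(\DDD u,\DDD u)$, the Divergence Theorem on $\{c<u<1\}$, the coarea formula, and $\HHH=\DDD^2u(\DDD u,\DDD u)/|\DDD u|^3$. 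The version you wrote down cannot be right as it stands: its first two terms are both constant multiples of $\Phi(c)/c$, and it still carries the $\HHH$-integral that the identity is supposed to eliminate. With the correct identity, the a.e.\ strict positivity of the slice integral becomes $\Psi'(c)>\tfrac{2(n-1)}{n-2}\,\Phi(c)$; since $\Psi$ is absolutely continuous and $\Psi(0^+)=0$ (by the decay $|\DDD u|\lesssim|x|^{1-n}$ and $|\{u=c\}|\lesssim c^{-(n-1)/(n-2)}$ from Lemmata \ref{lem:u_behaves} and \ref{lem:u_deriv_behaves}), integration over $(0,1)$ gives $\Phi(1)=\Psi(1)>\tfrac{2(n-1)}{n-2}\int_0^1\Phi(c)\,\rmd c$, the desired contradiction. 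So your approach buys nothing over the paper's (it needs the extra checks of absolute continuity and the boundary term at $c=0^+$, which the paper's one-shot integration avoids), but it is a legitimate alternative once the identity for $\Psi'$ is pinned down correctly.
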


To give a physical interpretation of condition (\ref{eq:cond_Lore_Virgi}), we recall that the \emph{electrostatic capacity} or \emph{Newtonian capacity} of $\Om$ is defined, according to \cite[Section 2.9]{Gil_Tru_book}, as
\begin{equation*}
{\rm Cap}(\Om):=\,\inf\bigg\{\,\int_{\R^n}|\DDD w|^2\,{\rm d}\mu\,:\,w\in C_{c}^{\infty},\,w\equiv1\mbox{ in }\Om\bigg\}.
\end{equation*}
It is easy to see that the electrostatic capacity can be equivalently expressed in terms of the solution $u$ to problem (\ref{eq:pb}) as 
\begin{equation*}
{\rm Cap}(\Om)\,=\!\!\int\limits_{\R^n\setminus\Om}|\DDD u|^2\,{\rm d}\mu=\int\limits_{\pa\Om}|\DDD u|\,{\rm d}\sigma
=\!\!\!\int\limits_{\{u=c\}}\!\!\!|\DDD u|\,{\rm d}\sigma,
\end{equation*}
for every $0<c<1$. Note that the Dirichlet integral of $u$ is finite thanks to Lemmata \ref{lem:u_behaves} and \ref{lem:u_deriv_behaves} and that the second and the third equality are due to 
the Divergence Theorem.
In the following corollary, which is an immediate consequence of Theorem \ref{thm:Lore_Virgi}, we replace condition (\ref{eq:cond_Lore_Virgi}) by a condition involving ${\rm Cap}(\Om)$ and the \emph{total mean curvature} of $\pa\Om$, which is defined as
$
\int_{\pa\Om}\HHH/(n-1)\,{\rm d}\sigma.
$

\begin{corollary}
\label{cor:Lore_Virgi_cap}
Let $u$ be a solution to problem (\ref{eq:pb}) satisfying the condition 
\begin{equation}
\label{eq:cond_Lore_Virgi_cor_cap}
\Big(\max\limits_{\pa\Om}|\DDD u|^2\Big/\min\limits_{\pa\Om}|\DDD u|^2\Big)
\!\!\int\limits_{\pa\Om}\frac{\HHH}{n-1}\,{\rm d}\sigma
\,\leq\,
\frac{{\rm Cap}(\Om)}{n-2}.
\end{equation}
Then $\Om$ is a ball and $u$ is rotationally symmetric.
\end{corollary}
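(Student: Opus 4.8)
The plan is to derive Corollary \ref{cor:Lore_Virgi_cap} directly from Theorem \ref{thm:Lore_Virgi} by showing that the hypothesis \eqref{eq:cond_Lore_Virgi_cor_cap} forces the integral inequality \eqref{eq:cond_Lore_Virgi} to hold on the level set $\{u=c\}=\pa\Om$, i.e. for $c=1$. Since $u\equiv 1$ on $\pa\Om$, the denominator $(n-2)u$ in the bracket of \eqref{eq:cond_Lore_Virgi} is simply $n-2$, so the inequality to be checked reads
\[
\int\limits_{\pa\Om}|\DDD u|^2\,\frac{\HHH}{n-1}\,\rmd\sigma
\,\leq\,
\frac{1}{n-2}\int\limits_{\pa\Om}|\DDD u|^3\,\rmd\sigma .
\]
First I would bound the left-hand side from above by replacing $|\DDD u|^2$ with its maximum over $\pa\Om$, obtaining $\max_{\pa\Om}|\DDD u|^2\int_{\pa\Om}\HHH/(n-1)\,\rmd\sigma$; note here that one should first check that the total mean curvature is nonnegative (otherwise the statement is interpreted with the obvious sign conventions), which follows since on $\pa\Om$ the outward Euclidean normal points in the direction of $-\DDD u$ and the solution is superharmonic near the boundary, or alternatively one simply observes that the conclusion is only interesting when the left side is positive. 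Then I would bound the right-hand side from below by replacing $|\DDD u|$ inside the cube with $\min_{\pa\Om}|\DDD u|$ in one factor only, writing $|\DDD u|^3\geq (\min_{\pa\Om}|\DDD u|)\,|\DDD u|^2$, wait — that goes the wrong way. Instead I would write $|\DDD u|^3 = |\DDD u|\cdot|\DDD u|^2 \geq (\min_{\pa\Om}|\DDD u|)\,|\DDD u|^2$ is again the wrong direction for a lower bound on the integral; the correct move is to use $\int_{\pa\Om}|\DDD u|^3\,\rmd\sigma \geq (\min_{\pa\Om}|\DDD u|)\int_{\pa\Om}|\DDD u|^2\,\rmd\sigma$, which IS a valid lower bound, and then recall $\int_{\pa\Om}|\DDD u|^2\rmd\sigma$ is not the capacity — the capacity is $\int_{\pa\Om}|\DDD u|\,\rmd\sigma$.

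Let me reorganize. The clean route is: starting from \eqref{eq:cond_Lore_Virgi_cor_cap}, one has
\[
\int\limits_{\pa\Om}\frac{\HHH}{n-1}\,\rmd\sigma
\,\leq\,
\frac{\min_{\pa\Om}|\DDD u|^2}{\max_{\pa\Om}|\DDD u|^2}\cdot\frac{{\rm Cap}(\Om)}{n-2}
\,=\,
\frac{\min_{\pa\Om}|\DDD u|^2}{\max_{\pa\Om}|\DDD u|^2}\cdot\frac{1}{n-2}\int\limits_{\pa\Om}|\DDD u|\,\rmd\sigma,
\]
using the identity ${\rm Cap}(\Om)=\int_{\pa\Om}|\DDD u|\,\rmd\sigma$ recorded in the excerpt. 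Multiplying through by $\max_{\pa\Om}|\DDD u|^2$ and using $\max_{\pa\Om}|\DDD u|^2\geq|\DDD u|^2$ pointwise on the left, and $\min_{\pa\Om}|\DDD u|^2\cdot|\DDD u|\leq|\DDD u|^3$ pointwise on the right, gives exactly
\[
\int\limits_{\pa\Om}|\DDD u|^2\,\frac{\HHH}{n-1}\,\rmd\sigma
\,\leq\,
\max\limits_{\pa\Om}|\DDD u|^2\!\int\limits_{\pa\Om}\frac{\HHH}{n-1}\,\rmd\sigma
\,\leq\,
\frac{1}{n-2}\int\limits_{\pa\Om}\min\limits_{\pa\Om}|\DDD u|^2\cdot|\DDD u|\,\rmd\sigma
\,\leq\,
\frac{1}{n-2}\int\limits_{\pa\Om}|\DDD u|^3\,\rmd\sigma,
\]
which is \eqref{eq:cond_Lore_Virgi} with $c=1$, using $u\equiv 1$ on $\pa\Om$. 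Note that the first inequality in this chain uses $\HHH/(n-1)\geq 0$ pointwise; if this is not assumed one argues instead with the total mean curvature being nonnegative, which is a standard consequence of the Willmore-type bound, or one simply restricts to the case where the left-hand integral is positive since otherwise \eqref{eq:cond_Lore_Virgi} holds trivially.

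Once \eqref{eq:cond_Lore_Virgi} is verified for the regular level set $\pa\Om=\{u=1\}$, Theorem \ref{thm:Lore_Virgi} applies verbatim and yields that $\Om$ is a ball and $u$ is rotationally symmetric. The main (and essentially only) obstacle is bookkeeping the direction of each estimate — in particular making sure the ratio $\max/\min$ lands on the correct side so that the pointwise replacements of $|\DDD u|$ by its extrema are consistent — together with the minor point of the sign of the mean curvature, which must be handled either by a geometric observation (the boundary of a domain whose exterior carries a positive harmonic function decaying at infinity has nonnegative mean curvature in the relevant convention, or at worst one invokes that the interesting case has positive total mean curvature) or by simply noting that \eqref{eq:cond_Lore_Virgi} is trivially satisfied when $\int_{\pa\Om}|\DDD u|^2\HHH/(n-1)\,\rmd\sigma\leq 0$. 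No new geometric input beyond Theorem \ref{thm:Lore_Virgi} and the capacity identity is needed, so the proof is short.
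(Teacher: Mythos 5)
Your chain of inequalities is exactly the intended one: the paper presents this corollary as an ``immediate consequence'' of Theorem \ref{thm:Lore_Virgi}, and the role of the ratio $\max_{\pa\Om}|\DDD u|^2/\min_{\pa\Om}|\DDD u|^2$ in \eqref{eq:cond_Lore_Virgi_cor_cap} is precisely to permit the two pointwise replacements
\begin{equation*}
\int\limits_{\pa\Om}|\DDD u|^2\,\frac{\HHH}{n-1}\,\rmd\sigma
\,\leq\,
\Big(\max_{\pa\Om}|\DDD u|^2\Big)\!\int\limits_{\pa\Om}\frac{\HHH}{n-1}\,\rmd\sigma
\qquad\text{and}\qquad
\Big(\min_{\pa\Om}|\DDD u|^2\Big)\!\int\limits_{\pa\Om}|\DDD u|\,\rmd\sigma
\,\leq\,
\int\limits_{\pa\Om}|\DDD u|^3\,\rmd\sigma,
\end{equation*}
which, combined with ${\rm Cap}(\Om)=\int_{\pa\Om}|\DDD u|\,\rmd\sigma$, yield condition \eqref{eq:cond_Lore_Virgi} with $c=1$ (note that $\pa\Om=\{u=1\}$ is automatically a regular level set by the Hopf lemma, so $\min_{\pa\Om}|\DDD u|^2>0$ and the ratio makes sense). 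So the skeleton of your proposal coincides with the paper's.

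The step you rightly flag — the first inequality above — genuinely requires $\HHH\geq0$ pointwise on $\pa\Om$, and none of your proposed remedies supplies this. The geometric claim that a domain whose exterior carries a positive harmonic function decaying at infinity must have nonnegative boundary mean curvature is false: every smooth bounded domain admits such a capacitary potential, while any non-convex $\pa\Om$ has $\HHH<0$ somewhere. Positivity of the total mean curvature is also insufficient: splitting $\HHH=\HHH^{+}-\HHH^{-}$ one only gets
\begin{equation*}
\int\limits_{\pa\Om} f\,\HHH\,\rmd\sigma\,\leq\,\big(\max f\big)\!\int\limits_{\pa\Om}\HHH\,\rmd\sigma+\big(\max f-\min f\big)\!\int\limits_{\pa\Om}\HHH^{-}\rmd\sigma,
\end{equation*}
and the extra nonnegative term cannot be absorbed. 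Finally, the reduction to the ``trivial case'' disposes of $\int_{\pa\Om}|\DDD u|^2\HHH\,\rmd\sigma\leq0$ but leaves open exactly the problematic case in which that integral is positive while $\HHH$ changes sign. As written, your derivation (like the paper's implicit one) closes only under the tacit assumption $\HHH\geq0$ on $\pa\Om$; otherwise either this must be imposed as an additional hypothesis or the implication \eqref{eq:cond_Lore_Virgi_cor_cap} $\Rightarrow$ \eqref{eq:cond_Lore_Virgi} needs a different argument.
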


To give another interpretation of condition (\ref{eq:cond_Lore_Virgi}), note that, a posteriori, this becomes an equality.  
More precisely, if $u$ is rotationally symmetric, one has that
\begin{equation}
\label{eq:H=}
\frac{\HHH}{n-1}=\frac{|\DDD u|}{(n-2)\,u},
\end{equation}
on every level set of $u$. 
Thus, the geometrical character of condition (\ref{eq:cond_Lore_Virgi}) is better understood by its pointwise version, which gives rise to the following corollary.

\begin{corollary}
\label{cor:Lore_Virgi}
Let $u$ be a solution to problem (\ref{eq:pb}) satisfying the condition 
\begin{equation}
\label{eq:cond_Lore_Virgi_cor}
\frac{\HHH}{n-1}
\,\leq\,
\frac{|\DDD u|}{(n-2)\,u}
\qquad\mbox{on }\ \{u=c\},
\end{equation}
for some $0<c\leq 1$
such that $\{u=c\}$ is a regular level set.
Then $\Om$ is a ball and $u$ is rotationally symmetric.
\end{corollary}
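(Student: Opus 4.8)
The plan is to deduce Corollary~\ref{cor:Lore_Virgi} directly from Theorem~\ref{thm:Lore_Virgi}, by showing that the pointwise overdetermining condition \eqref{eq:cond_Lore_Virgi_cor} forces the integral inequality \eqref{eq:cond_Lore_Virgi} on the very same level set $\{u=c\}$. The key observation I would use is that the weight $|\DDD u|^2$ appearing in \eqref{eq:cond_Lore_Virgi} is nonnegative, so that if
\[
\frac{\HHH}{n-1}-\frac{|\DDD u|}{(n-2)\,u}\,\leq\,0
\]
holds everywhere on $\{u=c\}$, then the integrand $|\DDD u|^2\big[\tfrac{\HHH}{n-1}-\tfrac{|\DDD u|}{(n-2)\,u}\big]$ is pointwise nonpositive, and hence so is its integral over $\{u=c\}$. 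Since $\{u=c\}$ is assumed regular, all the hypotheses of Theorem~\ref{thm:Lore_Virgi} are in force, and its conclusion — that $\Om$ is a ball and $u$ is rotationally symmetric — follows immediately.

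The only point that deserves a word is that the integrand in \eqref{eq:cond_Lore_Virgi} is indeed integrable, but I expect this to be automatic: on a regular level set the functions $|\DDD u|$, $u\equiv c$, and the mean curvature $\HHH$ (taken with respect to the normal pointing into $\{u\leq c\}$) are smooth, and $\{u=c\}$ is compact by the properness of $u$, so the integral is finite. No estimates beyond this should be needed, and no appeal to the coarea formula or Sard's Lemma (in contrast with Corollary~\ref{cor:cond_glob}) is required here.

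I do not expect any genuine obstacle: this is an \emph{immediate} consequence of Theorem~\ref{thm:Lore_Virgi}, and all the substance is contained in that theorem and in the splitting machinery of Sections~\ref{sec:int_per_parti}--\ref{sec:dim_thm}. The only subtlety is conceptual rather than technical — namely, as already noted after the statement of Theorem~\ref{thm:Lore_Virgi} (see \eqref{eq:H=}), the inequality \eqref{eq:cond_Lore_Virgi_cor} becomes an identity on every level set of $u$ once rotational symmetry is established, so the hypothesis is, a posteriori, saturated and cannot be relaxed to a strict inequality; this is what makes it the natural pointwise companion of the integral condition \eqref{eq:cond_Lore_Virgi}.
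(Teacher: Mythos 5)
Your argument is correct and coincides with the paper's (implicit) reasoning: the pointwise inequality \eqref{eq:cond_Lore_Virgi_cor} makes the integrand in \eqref{eq:cond_Lore_Virgi} nonpositive on the regular, compact level set $\{u=c\}$, so Theorem \ref{thm:Lore_Virgi} applies directly. The paper treats the corollary as an immediate consequence in exactly this way, so there is nothing to add.
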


Let us give one more comment about the geometric nature of (\ref{eq:H=}). In force of identity (\ref{eq:formula_H_H_g}), we have that (\ref{eq:H=}) holds true on some level set of $u$ is and only if such level set is a minimal hypersurface inside the conformally flat Riemannian manifold $(M,g)$.

\medskip
We now pass to
consider a more classical set of overdetermining conditions.
The following theorem tells us that if a solution to problem (\ref{eq:pb}) is such that $|\DDD u|$ is constant on $\{u=c\}$, then the same conclusions as in Theorem \ref{thm:Lore_Virgi} can be derived if we replace $\HHH$ by its infimum on $\{u=c\}$ in (\ref{eq:cond_Lore_Virgi_cor}).
It is worth pointing out that this result has been obtained
under weaker assumptions by several authors. Dropping any 
attempt of being complete, 
we refer the reader to
\cite{Af_Bu_ARMA,Fra_Ga_2008,Ga_Sa,Reich_1996,Reich_ARMA}.  

\begin{theorem}
\label{thm:nabla_const}
Let $u$ be a solution to problem (\ref{eq:pb}), satisfying the conditions
\begin{equation}
\label{eq:hyp_nabla_const}
|\DDD u|=d\quad\ \mbox{ on }\ \{u=c\}\qquad\mbox{ and }
\qquad
\inf_{\{u=c\}}\frac{\HHH}{n-1}\leq\frac{|\DDD u|}{(n-2)\,u}
\quad\ \mbox{on }\ \{u=c\},
\end{equation}
for some $0<c\leq 1$ and some $d>0$.
Then $\Om$ is a ball and $u$ is rotationally symmetric.
\end{theorem}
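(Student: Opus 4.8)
The plan is to deduce Theorem~\ref{thm:nabla_const} from the machinery underlying Theorem~\ref{thm:Lore_Virgi} by showing that the two conditions in \eqref{eq:hyp_nabla_const} already force the $P$--function $P:=|\na f|^2_g=|\DDD u|^2u^{-2(n-1)/(n-2)}$ (see \eqref{eq:gradienti_u_f}) to be constant on $M=\R^n\setminus\ove\Om$. Once this is known the conclusion follows as in Section~\ref{sec:dim_thm}: by the Bochner--type identity $\De_g P+\langle\na P\,|\,\na f\rangle_g=2|\na^2 f|^2_g$ one gets $\na^2 f\equiv 0$, so $\na f$ is a nontrivial parallel vector field, $(M,g)$ splits isometrically over a level set of $u$, and the vanishing of the Weyl tensor of $g$ forces that level set to be a round sphere, whence $\Om$ is a ball and $u$ is rotationally symmetric. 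Note at once that $\Sigma:=\{u=c\}$ is automatically a regular level set, since $|\DDD u|=d>0$ on it, and that the first condition in \eqref{eq:hyp_nabla_const} makes $P$ \emph{constant} on $\Sigma$, equal to $P_c:=d^2c^{-2(n-1)/(n-2)}$.

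I would then reuse two facts already at the core of the proof of Theorem~\ref{thm:Lore_Virgi}. First, multiplying the Bochner identity by $u=e^f>0$ gives $\mathrm{div}_g(u\,\na_g P)=2u\,|\na^2 f|^2_g\ge 0$; integrating over the outer region $\{0<u\le c\}$ --- whose boundary is $\Sigma$ and which possesses a single complete end, along which the flux of $u\,\na_g P$ vanishes in the limit by the decay estimates of Lemmata~\ref{lem:u_behaves} and~\ref{lem:u_deriv_behaves} --- the divergence theorem yields
\[
\int_{\Sigma}\pa_\nu P\,\rmd\sigma_g\ \ge\ 0,
\]
with equality if and only if $\na^2 f\equiv 0$ on $\{u\le c\}$, where $\nu$ is the unit normal to $\Sigma$ pointing in the direction of increasing $u$. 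Second, a pointwise computation on $\Sigma$ based on $\De u=0$ (which yields $\na^2 u(\nu,\nu)=\HHH\,|\DDD u|$, $\HHH$ being the mean curvature with respect to the normal pointing into $\{u\le c\}$, as in the statement) shows that $\pa_\nu P$ equals, at each point of $\Sigma$, a strictly positive multiple of $\HHH/(n-1)-|\DDD u|/((n-2)\,u)$. Hence the displayed inequality is the exact reverse of condition \eqref{eq:cond_Lore_Virgi}; this is precisely how Theorem~\ref{thm:Lore_Virgi} is proved, for then \eqref{eq:cond_Lore_Virgi} can only hold as an equality, forcing $\na^2 f\equiv 0$ on $\{u\le c\}$ and, by real--analyticity of $P$, on all of $M$. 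Under the present hypotheses we know instead only that $\HHH/(n-1)-|\DDD u|/((n-2)c)$ is nonpositive \emph{somewhere} on $\Sigma$, so the averaged information above has to be upgraded to a pointwise one.

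This is where Hopf's boundary point lemma enters. Since $P$ is a subsolution of the drift Laplacian $L:=\De_g+\langle\na f\,|\,\na\,\cdot\,\rangle_g$, which carries no zeroth order term, and since $P\equiv P_c$ is constant on the regular hypersurface $\Sigma$, as soon as one knows that $\Sigma$ realizes the maximum of $P$ over the closure of one of the two regions it bounds, \emph{every} point of $\Sigma$ is a boundary maximum point and Hopf's lemma makes $\pa_\nu P$ strictly one--signed along all of $\Sigma$ --- unless $P$ is globally constant, in which case we are already done. Taking the outer region $\{u\le c\}$ (for which $\nu$ is the exterior normal along $\Sigma$), the common sign of $\pa_\nu P$ cannot be negative, or else the integral inequality above would be violated, so $\pa_\nu P>0$ everywhere on $\Sigma$, i.e. $\HHH/(n-1)>|\DDD u|/((n-2)c)$ at every point of $\Sigma$; by compactness of $\Sigma$ this gives $\inf_{\Sigma}\HHH/(n-1)>|\DDD u|/((n-2)c)$, contradicting the second condition in \eqref{eq:hyp_nabla_const}. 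Hence $P$ is constant and the proof concludes as above. (Symmetrically, were $\Sigma$ the maximum set for the inner region $\{c\le u\le 1\}$, Hopf's lemma would give $\pa_\nu P<0$ all over $\Sigma$, again contradicting the integral inequality unless $P$ is constant.)

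The step I expect to be the main obstacle is exactly the hypothesis feeding Hopf's lemma: ensuring that $\Sigma$ realizes the maximum of $P$ over one of the regions it bounds. For the outer region --- a manifold with compact boundary $\Sigma$ and a single complete end along which $P$ tends to a limit $P_\infty$ depending only on $\mathrm{Cap}(\Om)=d\,|\Sigma|$ --- the maximum principle for $L$ reduces this to the comparison $P_\infty\le P_c$, equivalently to the scale--invariant inequality $(n-2)\,|\Sph^{n-1}|^{1/(n-1)}c\le d\,|\Sigma|^{1/(n-1)}$. I would establish it through a monotonicity of $t\mapsto\max_{\{u=t\}}P$ along the level--set flow, of the same nature as the integral identities analysed in Section~\ref{sec:int_per_parti}, which forces $\max_{\{u=t\}}P\le P_c$ for every $t<c$ and hence $P\le P_c$ throughout $\{u\le c\}$; this monotonicity, and with it the genuine role played by the behaviour of $u$ at infinity, is the technical heart of the argument. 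Finally, consistently with the remark preceding the statement, one should keep in mind that the conditions in \eqref{eq:hyp_nabla_const} are not optimal: the constancy of $|\DDD u|$ on a single level set is already known to imply radial symmetry (see e.g. \cite{Reich_1996,Reich_ARMA}), the additional curvature condition being what makes this self--contained argument run.
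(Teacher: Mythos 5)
Your overall architecture is the same as the paper's: show that the $P$-function $|\na f|_g^2$ attains its maximum over $\{u\le c\}$ on the level set $\Sigma=\{u=c\}$, observe that the first hypothesis makes $P$ constant there so that every point of $\Sigma$ is a boundary maximum, apply Hopf's lemma to get either $\pa_\nu P>0$ everywhere on $\Sigma$ (equivalently $\HHH_g>0$, i.e.\ $\HHH/(n-1)>|\DDD u|/((n-2)u)$ pointwise, contradicting the $\inf$ hypothesis by compactness) or $P$ constant, and then conclude via the splitting theorem. This is exactly Proposition \ref{prop:const_hyp} combined with Theorem \ref{prop:LCF_sfera}.

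However, there is a genuine gap at the step you yourself flag as ``the technical heart'': you never prove that $\Sigma$ realizes the maximum of $P$ over the exterior region. In the paper this is Lemma \ref{lem:Garof_equiv}, and it is not obtained by the route you sketch. Your reduction to the comparison $P_\infty\le P_c$ presupposes that $P$ converges at infinity to an explicit limit determined by ${\rm Cap}(\Om)$; the asymptotics actually established in the Appendix (Lemmata \ref{lem:u_behaves} and \ref{lem:u_deriv_behaves}) give only two-sided bounds on $u$ and upper bounds on $|\DDD u|$, $|\DDD^2u|$, which yield boundedness of $P$ but not a limit. Moreover, the resulting scale-invariant inequality relating $c$, $d$ and $|\Sigma|$ is not among the hypotheses and would itself have to be derived; the ``monotonicity of $t\mapsto\max_{\{u=t\}}P$'' you invoke is essentially a restatement of the claim to be proved, and the integral identities of Section \ref{sec:int_per_parti} control averages of $P$ over level sets, not suprema. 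The paper's device sidesteps all of this: one considers $w_\alpha:=|\na f|_g^2\,{\rm e}^{\alpha f}=P\,u^\alpha$ for $\alpha\in(0,1)$, which satisfies the elliptic inequality \eqref{eq:eq_alpha} with a zeroth-order term of favourable sign and, crucially, tends to $0$ at infinity because $u\to0$ while $P$ stays bounded; the maximum principle then forces $w_\alpha$ to attain its maximum on the boundary level set, and letting $\alpha\to0^+$ recovers the statement for $P$ without ever identifying $P_\infty$. Without this (or an equivalent) argument, your proof is incomplete.
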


Hypothesis (\ref{eq:hyp_nabla_const}) on $\pa\Om$ has been considered in \cite{Fra_Ga_2008} to prove rotational symmetry of the solutions in the case of \emph{partially} overdetermined elliptic problems.
To give an explanation of the second condition in (\ref{eq:hyp_nabla_const}), we observe that, in the same spirit of 
\cite{Ga_Sa}, one can deduce from the constancy of the gradient on $\pa\Om$ that either $\HHH/n-1=|\DDD u|/(n-2)\,u$ 
or $\HHH/n-1>|\DDD u|/(n-2)\,u$ on $\{u=c\}$.
While the first case allows us to conclude, the second case can be excluded thanks to the second condition in (\ref{eq:hyp_nabla_const}) or, as done in \cite{Ga_Sa}, thanks to the assumption that $\Om$ is starshaped.
A similar comment can be made about the corresponding condition in
Theorem \ref{thm:nabla_const_delta} below.
Finally, we point out that, 
as well as for Theorem \ref{thm:Lore_Virgi} and related corollaries,
the connectedness of $\Om$ is 
not required and can be deduced a posteriori.

\medskip
  
We now turn the attention to the second problem in (\ref{eq:problemi}).
In other words, we consider a solution of the Laplace equation in the punctured domain, with constant Dirichlet boundary data and with a non-removable singularity at the origin.
As pointed out in \cite{Serrin_1965},
note that the first and the third condition of this problem imply that $-\De u=d|\pa\Om|\delta_0$ in $\Om$, 
for some $d>0$, where $\delta_0$ is the Dirac distribution centred at $0$ and the equation is understood in the sense of distributions. 
For this reason, in what follows we will always deal with the problem 
\begin{equation}
\label{eq:pb_delta}
\left\{
\begin{array}{rcll}
\displaystyle
-\Delta u\!\!\!\!&=&\!\!\!\!d\,|\pa\Om|\,\delta_0 & {\rm in }\quad\Om,\\
\displaystyle
  u\!\!\!\!&=&\!\!\!\!c  &{\rm on }\ \ \pa\Om,\\
\end{array}
\right. 
\end{equation}
for some given $d>0$ and $c\in\R$.
For future reference, let us remark that the first equation in the above problem 
implies that
\begin{equation}
\label{eq:cond_nec}
\int_{\pa\Om}|\DDD u|\,\rmd\sigma=d\,|\pa\Om|.
\end{equation}
This can be easily deduced via the Divergence Theorem and Lemma \ref{lem:delta_behaves}.

The next results are the counterpart of Theorem \ref{thm:Lore_Virgi},
Corollary \ref{cor:Lore_Virgi}, and Theorem \ref{thm:nabla_const} in the
interior punctured domain.
To the authors' knowledge, the following overdetermining condition 
(\ref{eq:cond_Lore_Virgi_delta}) and its weaker version
(\ref{eq:cond_Lore_Virgi_cor_delta})
have not been considered in the literature so far.

\begin{theorem}
\label{thm:Lore_Virgi_delta}
Let $u$ be a solution to problem (\ref{eq:pb_delta}) satisfying the integral condition 
\begin{equation}
\label{eq:cond_Lore_Virgi_delta}
\frac{\displaystyle\fint_{\pa\Om}\frac{\HHH}{n-1}\,\,|\DDD u|^2\,\rmd\sigma}
     {\displaystyle\left(\fint_{\pa\Om}|\DDD u|\,\rmd\sigma\right)^2}
\geq
\left(\frac{|\Sph^{n-1}|}{|\pa\Om|}\right)^{\frac1{n-1}}
\left[
     \frac{\displaystyle\fint_{\pa\Om}|\DDD u|^3\,\rmd\sigma}
     {\displaystyle\left(\fint_{\pa\Om}|\DDD u|\,\rmd\sigma\right)^3}
\right]^{\frac{n}{2(n-1)}}\!\!\!\!\!\!\!\!\!\!\!\!,
\end{equation}
where $\HHH$ is the mean curvature of the hypersurface $\pa\Om$ with respect to the normal pointing towards the exterior of $\Om$. 
Then $\Om$ is a ball and $u$ is rotationally symmetric.
\end{theorem}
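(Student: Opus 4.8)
The plan is to run, in the interior punctured domain, the same machine that yields Theorems~\ref{thm:Lore_Virgi}--\ref{thm:nabla_const}. First I reformulate conformally: on $M:=\overline{\Om}\setminus\{0\}$ put $g=u^{\frac2{n-2}}g_{\R^n}$ and $f=\log u$, so that, by the computations of Section~\ref{sec:reformulation}, $(M,g,f)$ solves the coupled system~\eqref{eq:coupled} and $g$ is locally conformally flat. Using Lemma~\ref{lem:delta_behaves} (and a first-derivative refinement of it) I record the behaviour at the puncture, $u(x)=A|x|^{2-n}+B+O(|x|)$ as $x\to 0$ with $A=d\,|\pa\Om|/\big((n-2)|\Sph^{n-1}|\big)$ by~\eqref{eq:cond_nec}; hence $(M,g)$ carries a single cylindrical end over the round sphere of radius $A^{1/(n-2)}$, and in particular $w:=|\na f|^2_g\to w_\infty:=(n-2)^2A^{-\frac2{n-2}}$ along that end, with $w-w_\infty$ and its gradient decaying fast enough for the flux integrals below to converge.

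Combining~\eqref{eq:coupled} with the Bochner formula as in the Introduction, $w$ solves $\mathrm{div}_g\big(\na w+w\,\na f\big)=2\,|\na^2 f|^2_g\ge 0$. I integrate this over $M$, exhausting by $\{x\in\Om:|x|\ge\ep\}$ and letting $\ep\to0$. On $\pa\Om$ the function $f\equiv\log c$ is constant, so the splitting of $\De_g f=0$ across $\pa\Om$ expresses $\na^2 f(\nu,\nu)$ through the $g$-mean curvature $\HHH_g$, and the $\pa\Om$-flux of $\na w+w\na f$ reduces to $-\int_{\pa\Om}\big(2\HHH_g\,w+w^{3/2}\big)\,\rmd\sigma_g$; rewriting $\HHH_g$ by the conformal rule~\eqref{eq:formula_H_H_g} and inserting $u\equiv c$, $\rmd\sigma_g=c^{\frac{n-1}{n-2}}\rmd\sigma$ turns this into an explicit integral of $\HHH$, $|\DDD u|$ and $c$ over $\pa\Om$, which is the Payne--Weinberger--Philippin $P$-function identity read geometrically. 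By the asymptotics of the first paragraph the flux at the puncture converges to $(n-2)\,A\,w_\infty\,|\Sph^{n-1}|$. Collecting terms yields $2\int_M|\na^2 f|^2_g\,\rmd\mu_g=(n-2)\,A\,w_\infty\,|\Sph^{n-1}|-\int_{\pa\Om}\big(2\HHH_g\,w+w^{3/2}\big)\,\rmd\sigma_g$; in particular the left side vanishes precisely when the $\pa\Om$-integral equals $(n-2)Aw_\infty|\Sph^{n-1}|$, a number depending only on $\Om$ and $d$.

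The heart of the argument is to deduce from~\eqref{eq:cond_Lore_Virgi_delta} that the right side of that identity is $\le 0$, hence $\na^2 f\equiv 0$ on $M$. Since replacing $c$ by any other positive value merely adds a constant to $u$ — leaving $|\DDD u|$ on $\pa\Om$ and the number $A$ untouched — the inequality $2c^{-\frac{n}{n-2}}\int_{\pa\Om}\HHH|\DDD u|^2\,\rmd\sigma-\frac{n}{n-2}c^{-\frac{2(n-1)}{n-2}}\int_{\pa\Om}|\DDD u|^3\,\rmd\sigma\le(n-2)Aw_\infty|\Sph^{n-1}|$ holds for every $c>0$. Maximising the left side over $c$ (a Young-type step, legitimate since $\int_{\pa\Om}\HHH|\DDD u|^2\rmd\sigma>0$ — otherwise~\eqref{eq:cond_Lore_Virgi_delta} fails outright), and then eliminating $d$ through~\eqref{eq:cond_nec}, gives a $c$-free inequality which is exactly the reverse of~\eqref{eq:cond_Lore_Virgi_delta}. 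Under~\eqref{eq:cond_Lore_Virgi_delta} it therefore holds with equality, which forces $\na^2 f\equiv 0$ on $M$ (for the solution realising the optimal $c$); equivalently $w=|\na f|^2_g\equiv w_\infty>0$, so $\na f$ is a nontrivial parallel vector field.

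By the splitting principle of Section~\ref{sec:dim_thm}, $(M,g)$ is then the Riemannian product $[0,+\infty)\times(\Sigma,g_\Sigma)$ with $\Sigma$ a level set of $f$ and $f$ affine in the line parameter; feeding this back into~\eqref{eq:coupled} shows $(\Sigma,g_\Sigma)$ is Einstein with positive constant, and the vanishing of the Weyl tensor of $g$ then forces $(\Sigma,g_\Sigma)$ to be a round sphere, exactly as in~\cite{Ca_Ma_Ma_Ri}. Unwinding $g=u^{\frac2{n-2}}g_{\R^n}$ shows that $\Om$ is a ball centred at the origin, and since the solution of~\eqref{eq:pb_delta} on a ball is radial, $u$ is rotationally symmetric. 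I expect the decisive difficulty to be in the third paragraph: carrying out the optimisation over $c$ and checking that the powers of $|\DDD u|$, $|\pa\Om|$ and $|\Sph^{n-1}|$ it produces are precisely those in~\eqref{eq:cond_Lore_Virgi_delta}, with the inequality pointing the right way; a secondary technical point is to make rigorous the computation of the boundary flux at the puncture, which requires the expansions of $u$ and $\DDD u$ one term beyond the leading pole.
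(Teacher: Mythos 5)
Your proposal is correct, and it reaches the conclusion by a genuinely different route at the decisive step, even though the underlying integral identity is the same as the paper's. The paper proves Theorem~\ref{thm:Lore_Virgi_delta} through Proposition~\ref{prop:int_part_2}: it integrates the Bochner identity against the cutoff weights $\varphi_t(f)=\log(1-{\rm e}^f/t)$ (Corollary~\ref{cor:int_pimo} with $K=1$), lets $t\to\infty$ by monotone convergence, and then uses the freedom of adding a constant to $u$ to normalize the Dirichlet datum to the specific value $c_1$ of \eqref{def:c_n_1}, chosen precisely so that the $\int|\na f|_g^3$ term on $\pa\Om$ cancels against the flux coming from the cylindrical end (the first condition in \eqref{eq:cond_curv_media_2}, verified via \eqref{eq:calc_limit}); with that normalization the hypothesis \eqref{eq:cond_Lore_Virgi_delta} reduces to the remaining sign condition $\int_{\pa\Om}|\na f|_g^2\,\HHH_g\,\rmd\sigma_g\geq0$. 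You instead keep the unweighted identity
\[
2\int_M|\na^2f|_g^2\,\rmd\mu_g
=(n-2)\,A\,w_\infty\,|\Sph^{n-1}|
-2c^{-\frac{n}{n-2}}\!\int_{\pa\Om}\!\HHH\,|\DDD u|^2\,\rmd\sigma
+\tfrac{n}{n-2}\,c^{-\frac{2(n-1)}{n-2}}\!\int_{\pa\Om}\!|\DDD u|^3\,\rmd\sigma,
\]
observe that nonnegativity of the left-hand side holds for \emph{every} additive normalization $c$, and maximize over $c$. I have checked the elementary optimization you flag as the decisive difficulty: with $s=c^{-1/(n-2)}$ the left boundary combination is $2P s^n-\frac{n}{n-2}Qs^{2(n-1)}$ with $P=\int\HHH|\DDD u|^2$, $Q=\int|\DDD u|^3$, whose maximum is $\big((n-2)P/(n-1)\big)^{\frac{2(n-1)}{n-2}}Q^{-\frac{n}{n-2}}$, and comparing with $(n-2)Aw_\infty|\Sph^{n-1}|$ (using $A=\int_{\pa\Om}|\DDD u|\,\rmd\sigma/\big((n-2)|\Sph^{n-1}|\big)$) yields exactly the reverse of \eqref{eq:cond_Lore_Virgi_delta}, with all powers of $|\pa\Om|$ cancelling; so equality is forced and $\na^2f\equiv0$ for the optimizing normalization, after which the splitting and Weyl arguments proceed as in the paper. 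Both proofs rest on the same flux computation at the puncture via Lemma~\ref{lem:delta_behaves}. What your version buys is conceptual clarity: it exhibits \eqref{eq:cond_Lore_Virgi_delta} as the exact reversal of an inequality that holds unconditionally, hence as a sharp rigidity condition, and it explains where its peculiar exponents come from; the paper's version fits more systematically into the general weighted framework of Corollary~\ref{cor:int_pimo} and avoids estimating the flux of $\na|\na f|_g^2$ at the puncture directly (your route needs the expansion $u=A|x|^{2-n}+v$ with $v$ smooth across the origin, which Lemma~\ref{lem:delta_behaves} supplies, to see that this flux vanishes in the limit). The two points you rightly keep track of — that $\int_{\pa\Om}\HHH|\DDD u|^2\,\rmd\sigma>0$ under the hypothesis, and that the vanishing of the Hessian is obtained for the shifted solution, which suffices — are handled correctly.
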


Note that the quantity $\big(|\Sph^{n-1}|/|\pa\Om|\big)^{\frac1{n-1}}$ appearing in the above integral inequality is the inverse of the $n$-dimensional \emph{Russel capacity} (see \cite{Cra_Fra_Ga} and the references therein).
In the following straightforward corollary of Theorem \ref{thm:Lore_Virgi_delta} we consider the
pointwise version of hypothesis (\ref{eq:cond_Lore_Virgi_delta}).
This result can be seen as a bridge between Theorem \ref{thm:Lore_Virgi_delta} and Theorem \ref{thm:nabla_const_delta} below.

\begin{corollary}
\label{thm:altro_thm}
Let $u$ be a solution to problem (\ref{eq:pb_delta}) satisfying the condition 
\begin{equation}
\label{eq:cond_Lore_Virgi_cor_delta}
\frac{\HHH}{n-1}
\,\geq\,
\left(\frac{|\Sph^{n-1}|}{|\pa\Om|}\right)^{\frac1{n-1}}
\left[
\frac{\displaystyle\left(\fint_{\pa\Om}|\DDD u|\,\rmd\sigma\right)^2}
{\displaystyle\fint_{\pa\Om}|\DDD u|^2\,\rmd\sigma}
\right]
\!\!
\left[
     \frac{\displaystyle\fint_{\pa\Om}|\DDD u|^3\,\rmd\sigma}
     {\displaystyle\left(\fint_{\pa\Om}|\DDD u|\,\rmd\sigma\right)^3}
\right]^{\frac{n}{2(n-1)}}
\quad\mbox{on }\ \{u=c\}.
\end{equation}
Then $\Om$ is a ball and $u$ is rotationally symmetric.
\end{corollary}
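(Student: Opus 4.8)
The plan is to derive this corollary directly from Theorem \ref{thm:Lore_Virgi_delta}, by checking that the pointwise hypothesis \eqref{eq:cond_Lore_Virgi_cor_delta} forces the integral hypothesis \eqref{eq:cond_Lore_Virgi_delta}. First I would observe that, since the Dirichlet condition in problem \eqref{eq:pb_delta} gives $u\equiv c$ on $\pa\Om$ and $u>c$ in the interior, we have $\{u=c\}=\pa\Om$, so all the integrals appearing in \eqref{eq:cond_Lore_Virgi_delta} and \eqref{eq:cond_Lore_Virgi_cor_delta} are taken over $\pa\Om$. Next I would record a non‑degeneracy remark: by \eqref{eq:cond_nec} one has $\fint_{\pa\Om}|\DDD u|\,\rmd\sigma=d>0$, and hence, by Jensen's inequality applied to the probability measure $\rmd\sigma/|\pa\Om|$ and to the convex functions $t\mapsto t^2$, $t\mapsto t^3$ on $[0,\infty)$,
\[
\fint_{\pa\Om}|\DDD u|^2\,\rmd\sigma\ \ge\ d^2>0,\qquad
\fint_{\pa\Om}|\DDD u|^3\,\rmd\sigma\ \ge\ d^3>0,
\]
so that every denominator occurring in the two conditions is a strictly positive real number and all the quotients are well defined.

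Then I would denote by $K$ the constant appearing on the right-hand side of \eqref{eq:cond_Lore_Virgi_cor_delta}, so that hypothesis \eqref{eq:cond_Lore_Virgi_cor_delta} reads $\HHH/(n-1)\ge K$ at every point of $\pa\Om$. Multiplying this inequality by $|\DDD u|^2\ge 0$ and averaging over $\pa\Om$ yields
\[
\fint_{\pa\Om}\frac{\HHH}{n-1}\,|\DDD u|^2\,\rmd\sigma\ \ge\ K\,\fint_{\pa\Om}|\DDD u|^2\,\rmd\sigma.
\]
By the very definition of $K$, the factor $\fint_{\pa\Om}|\DDD u|^2\,\rmd\sigma$ cancels the matching denominator inside $K$, and dividing both sides by $\big(\fint_{\pa\Om}|\DDD u|\,\rmd\sigma\big)^2>0$ turns the right-hand side precisely into the right-hand side of \eqref{eq:cond_Lore_Virgi_delta}. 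Hence \eqref{eq:cond_Lore_Virgi_delta} holds, and Theorem \ref{thm:Lore_Virgi_delta} applies and gives that $\Om$ is a ball and $u$ is rotationally symmetric.

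As the argument shows, there is no genuine obstacle here: the proof is a one-line averaging of the pointwise inequality followed by an elementary algebraic cancellation, the only mild point deserving a word being the positivity of the mean values of $|\DDD u|$, $|\DDD u|^2$ and $|\DDD u|^3$ on $\pa\Om$, which is needed to make sense of the various ratios and to divide. All the substantive analytic and geometric content of the statement is inherited from Theorem \ref{thm:Lore_Virgi_delta}, of which this corollary is just the pointwise specialization.
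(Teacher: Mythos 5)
Your proof is correct and is exactly the intended argument: the paper gives no explicit proof, calling the corollary a "straightforward" pointwise specialization of Theorem \ref{thm:Lore_Virgi_delta}, and the intended route is precisely your weighted averaging of the pointwise inequality against $|\DDD u|^2$ followed by the algebraic cancellation. The positivity remark via \eqref{eq:cond_nec} and Jensen is a sensible (if minor) addition that the paper leaves implicit.
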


\medskip

In the following theorem we consider the case where a solution to problem (\ref{eq:pb_delta}) has constant Neumann boundary data. In this case, the first equation of (\ref{eq:pb_delta}) and (\ref{eq:cond_nec}) imply that the constant must coincide with $d$, namely
\begin{equation}
\label{eq:Neu_cond}
|\DDD u|=d\qquad\mbox{on}\quad\pa\Om.
\end{equation} 
Let us remark that when this condition is fulfilled, then the 
pointwise inequality (\ref{eq:cond_Lore_Virgi_cor_delta}) 
reduces to 
\[
\frac{\HHH}{n-1}
\,\geq\,
\left(\frac{|\Sph^{n-1}|}{|\pa\Om|}\right)^{\frac1{n-1}}
\qquad\mbox{on }\ \{u=c\}.
\] 
The following results tells us that a weaker version of this 
condition coupled with (\ref{eq:Neu_cond}) 
guarantees the radial symmetry of the solutions.

\begin{theorem}
\label{thm:nabla_const_delta}
Let $u$ be a solution to problem (\ref{eq:pb_delta}) satisfying the conditions
\begin{equation}
\label{eq:hyp_nabla_const_delta}
|\DDD u|=d\quad\ \mbox{ on }\ \pa\Om\qquad\mbox{ and }
\qquad
\sup_{\pa\Om}\frac{\HHH}{n-1}\geq\left(\frac{|\Sph^{n-1}|}{|\pa\Om|}\right)^{\frac1{n-1}}\!\!\!\!\!\!.
\end{equation}
Then $\Om$ is a ball and $u$ is rotationally symmetric.
\end{theorem}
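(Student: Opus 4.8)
Since Theorem~\ref{thm:Lore_Virgi_delta} and its pointwise Corollary~\ref{thm:altro_thm} are already available, the plan is to reduce the present statement to Corollary~\ref{thm:altro_thm} through a Garofalo--Sartori type dichotomy built on the constancy of $|\DDD u|$ on $\pa\Om$. The elementary observation preceding the statement is that, once \eqref{eq:Neu_cond} holds, the pointwise hypothesis \eqref{eq:cond_Lore_Virgi_cor_delta} collapses to
\[
\frac{\HHH}{n-1}\;\geq\;\left(\frac{|\Sph^{n-1}|}{|\pa\Om|}\right)^{\frac1{n-1}}\qquad\text{on }\ \pa\Om;
\]
hence it is enough to show that this inequality holds at \emph{every} point of $\pa\Om$ — in which case Corollary~\ref{thm:altro_thm} concludes — or else to contradict the weaker assumption $\sup_{\pa\Om}\HHH/(n-1)\geq(|\Sph^{n-1}|/|\pa\Om|)^{1/(n-1)}$ in \eqref{eq:hyp_nabla_const_delta}.

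To set up the dichotomy I would pass, via Section~\ref{sec:reformulation}, to the manifold $M=\ove\Om\setminus\{0\}$ with $g=u^{2/(n-2)}g_{\R^n}$ and $f=\log u$, solving the coupled system \eqref{eq:coupled}. By Lemma~\ref{lem:delta_behaves} the puncture is a cylindrical end of $(M,g)$, along which the $P$-function $P:=|\DDD u|^2/u^{2\frac{n-1}{n-2}}=|\na f|^2_g$ (see \eqref{eq:gradienti_u_f}) tends to a positive constant depending only on $d$, $|\pa\Om|$ and $n$. Since $u\equiv c$ and $|\DDD u|\equiv d$ on $\pa\Om$, the function $P$ is constant there, and differentiating it along the outer Euclidean normal $\nu$, while using that $u$ is harmonic in a collar of $\pa\Om$ — so that $\pa^2_{\nu\nu}u=-\HHH\,\pa_\nu u$ on $\pa\Om$ — gives the boundary identity
\[
\pa_\nu P\big|_{\pa\Om}\;=\;2(n-1)\,\frac{d^2}{c^{2(n-1)/(n-2)}}\left[\frac{d}{(n-2)\,c}-\frac{\HHH}{n-1}\right].
\]
Thus, by \eqref{eq:formula_H_H_g}, the sign of $\pa_\nu P$ on $\pa\Om$ is opposite to that of the mean curvature $\HHH_g$ of $\pa\Om$ inside $(M,g)$.

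Next I would use the analysis of Section~\ref{sec:int_per_parti}: from \eqref{eq:coupled} and the refined Kato inequality for the $g$-harmonic $f$ one gets $\De_g P+\langle\na P,\na f\rangle_g=2|\na^2 f|^2_g\geq0$, so $P$ is a subsolution of the drift Laplacian on $M$. Combining the strong maximum principle, Hopf's lemma at the constant boundary value of $P$ on $\pa\Om$, and the control of $P$ at the cylindrical end, one concludes that either $P$ is constant on $M$, or $\pa_\nu P$ is strictly signed at every point of $\pa\Om$; via the boundary identity and the comparison of $P|_{\pa\Om}$ with the limiting value of $P$ at the end, the latter alternative yields the strict reverse of the collapsed condition all over $\pa\Om$, which contradicts \eqref{eq:hyp_nabla_const_delta} since $\pa\Om$ is compact. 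In the surviving alternative $P$ is constant, so $\na f$ is a nontrivial parallel vector field, $(M,g)$ splits isometrically as a product over a level set of $u$, and the vanishing of the Weyl tensor together with the quasi-Einstein relation in \eqref{eq:coupled} forces that level set to be a constant curvature sphere (Section~\ref{sec:dim_thm}); as in Remark~\ref{rm:diffeo} all level sets of $u$ are then regular and diffeomorphic to $\Sph^{n-1}$, so $\pa\Om$ is a round sphere, $\Om$ is a ball, and $u$ is rotationally symmetric.

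The step I expect to be the real obstacle is this dichotomy: one must guarantee that, in the non-constant case, the extremum of $P$ over $\ove M$ is attained on $\pa\Om$ — so that Hopf's lemma applies there with a strict sign — rather than merely approached along the cylindrical end. This calls for the sharp asymptotics of $u$ and $|\DDD u|$ near the puncture from Lemma~\ref{lem:delta_behaves}, together with the monotonicity along the level sets of $u$ underlying the integral argument for Theorem~\ref{thm:Lore_Virgi_delta}; once that comparison is secured, the sign bookkeeping relating $\pa_\nu P$, $\HHH_g$ and the Russel-capacity constant, and the concluding appeal to the splitting theorem, are routine.
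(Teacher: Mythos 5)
Your plan follows essentially the same route as the paper: pass to $(M,g)$ with $f=\log u$, observe via the Bochner identity \eqref{eq:Bochner_g} that $P=|\na f|_g^2$ is a subsolution of the drift Laplacian, run a Hopf-lemma dichotomy on $\pa\Om$ (either $P$ is constant or the normal derivative of $P$ has a strict sign, hence $\HHH_g$ has a strict sign on all of the compact $\pa\Om$, contradicting the curvature hypothesis), and conclude in the constant case with the splitting theorem. Your boundary identity for $\pa_\nu P$ and its sign relation to $\HHH_g$ via \eqref{eq:formula_H_H_g} are correct, and this is exactly the mechanism of Proposition \ref{prop:const_hyp_2}.

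However, the step you yourself flag as ``the real obstacle'' --- guaranteeing that the supremum of $P$ over $M$ is \emph{attained} on $\pa\Om$ rather than merely approached at the puncture, so that Hopf's lemma applies there with the right sign --- is left unresolved, and it is a genuine gap: for a generic Dirichlet constant $c$ the boundary value $P|_{\pa\Om}=d^2c^{-2(n-1)/(n-2)}$ can be strictly smaller than the limiting value of $P$ at the origin, in which case the maximum is not on $\pa\Om$ and the whole dichotomy collapses. The paper closes this by exploiting the freedom to add a constant to $u$ (neither hypothesis in \eqref{eq:hyp_nabla_const_delta} is affected): by Lemma \ref{lem:delta_behaves} the limit of $P$ at the puncture equals $(n-2)^{2\frac{n-1}{n-2}}\big(|\Sph^{n-1}|/(d|\pa\Om|)\big)^{2/(n-2)}$, and one normalizes $u\equiv c_2:=\tfrac{d}{n-2}\big(|\pa\Om|/|\Sph^{n-1}|\big)^{1/(n-1)}$ on $\pa\Om$ so that $P|_{\pa\Om}$ coincides with that limit; the maximum principle for the subsolution $P$ then forces $\max_M P=P|_{\pa\Om}$. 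This normalization is also where the specific constant $\big(|\Sph^{n-1}|/|\pa\Om|\big)^{1/(n-1)}$ in the hypothesis enters: with $u\equiv c_2$ the condition $\sup_{\pa\Om}\HHH/(n-1)\geq\big(|\Sph^{n-1}|/|\pa\Om|\big)^{1/(n-1)}$ becomes exactly $\sup_{\pa\Om}\HHH/(n-1)\geq|\DDD u|/((n-2)u)$, i.e.\ $\sup_{\pa M}\HHH_g\geq 0$, which is what the dichotomy needs. Your plan never accounts for the role of this constant, which is the symptom of the missing normalization; no appeal to ``monotonicity along level sets'' is needed once it is made.
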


The same conclusions of the above theorem have been derived
under weaker assumptions, e.g., in \cite{Pay_Sch,{Enc_Per}}.
See also \cite{Ag_Ma} where a $2$-dimensional analysis of problem (\ref{eq:pb_delta}) with more general
symmetric-type Neumann conditions is performed. 

We finally state a result which shows how our method applies to problems defined on annular domains. This result can be thought of as a link between what we have obtained for the exterior domain and for the interior punctured domain.

\begin{theorem}
\label{thm:2bordi}
Let $u$ be a positive solution to either problem (\ref{eq:pb}) or problem (\ref{eq:pb_delta}) satisfying the conditions 
\begin{equation*}
\frac{\HHH}{n-1}\geq\frac{|\DDD u|}{(n-2)\,u}
\quad\ \mbox{on }\ \{u=a\}
\qquad\mbox{and}\qquad
\frac{\HHH}{n-1}\leq\frac{|\DDD u|}{(n-2)\,u}
\quad\ \mbox{on }\ \{u=b\},
\end{equation*}
for some $0<a<b$
such that $\{u=a\}$ and $\{u=b\}$ are regular level sets.
Suppose that either $\{u=a\}$ or $\{u=b\}$ is connected. 
Then $\Om$ is a ball and $u$ is rotationally symmetric.
\end{theorem}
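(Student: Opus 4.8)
The plan is to adapt the two–step strategy outlined in the introduction to the annular setting, exploiting the fact that $(M,g)$ with $g=u^{2/(n-2)}g_{\R^n}$ is now a manifold with \emph{two} boundary components, namely $\{u=a\}$ and $\{u=b\}$ (together with, in the case of problem \eqref{eq:pb}, the asymptotic end, which is however handled by the behaviour lemmata). First I would recall that the coupled system \eqref{eq:coupled} holds on the portion of $M$ between the two level sets, so that $w:=|\na f|^2_g$ satisfies the Bochner-type equation $\Delta_g w+\langle\na w\,|\,\na f\rangle_g=2|\na^2 f|^2_g\geq 0$, i.e.\ $w$ is a subsolution of a drift-Laplace operator. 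Next I would integrate this equation over the region $\{a\leq u\leq b\}$ (equivalently between the two $g$-level sets of $f$), using the Divergence Theorem: the bulk term gives $\int 2|\na^2 f|^2_g\,d\mu_g\geq 0$, while the boundary terms produce fluxes of $\na w$ across $\{u=a\}$ and $\{u=b\}$. Using formula \eqref{eq:formula_H_H_g} relating the mean curvature $\HHH_g$ of a level set in $(M,g)$ to the Euclidean mean curvature $\HHH$ and $|\DDD u|/u$, the hypotheses translate precisely into the statement that $\{u=a\}$ has nonpositive $g$-mean curvature (with respect to the relevant normal) and $\{u=b\}$ has nonnegative $g$-mean curvature, or vice versa — in any case the two boundary flux terms have a definite, mutually consistent sign that forces $|\na^2 f|^2_g\equiv 0$ and hence $|\na f|_g$ to be a positive constant with $\na^2 f\equiv 0$.

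Once $\na f$ is a nontrivial parallel vector field on $(M,g)$, the de Rham splitting theorem (as invoked in Section~\ref{sec:dim_thm}) gives that $(M,g)$ is isometric to a product $I\times N$, where $I\subset\R$ is an interval and $N$ is a level set of $f$; since $f$ is affine in the product coordinate, $f$—and hence $u$—depends only on that coordinate. At this stage I would use the connectedness hypothesis: the assumption that at least one of $\{u=a\}$, $\{u=b\}$ is connected guarantees that the fiber $N$ is connected, which is what we need in the second step. For the second step I would invoke the fact, emphasized in the introduction, that the Weyl tensor of $g$ vanishes identically (the conformal flatness of $g_{\R^n}$ is preserved, and one checks $W_g\equiv 0$); combined with the quasi-Einstein equation this puts us exactly in the framework of \cite{Ca_Ma_Ma_Ri}, from which one deduces that the connected fiber $N$ has constant sectional curvature, i.e.\ is a round sphere (using also that $N$ is, topologically, a sphere by Remark~\ref{rm:diffeo} in the exterior case, and by the analogous statement in the punctured case). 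Therefore $(M,g)$ is a piece of the round half-cylinder, $g$ is the cylindrical metric up to a constant, and unwinding the conformal change shows $u$ is a radial function of $|x|$; in particular each level set, and hence $\pa\Om$, is a Euclidean sphere, so $\Om$ is a ball.

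I would then dispose of the loose ends. In the exterior case one must check that the analysis extends up to the asymptotic end: Lemmata~\ref{lem:u_behaves} and~\ref{lem:u_deriv_behaves} control $u$ and $\DDD u$ at infinity, which both makes the integrations by parts legitimate (no boundary term at infinity survives, or it has the right sign) and identifies the correct half-cylinder. In the punctured case the singularity at $0$ plays the role of an end and is controlled by Lemma~\ref{lem:delta_behaves} together with \eqref{eq:cond_nec}. One also needs to note that the positivity hypothesis on $u$ and the regularity of the two level sets ensure that the region $\{a\le u\le b\}$ is a genuine compact manifold with smooth boundary on which $g$ is smooth and the normals are well defined, exactly as discussed after Theorem~\ref{thm:Lore_Virgi}.

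The main obstacle I expect is bookkeeping the \emph{signs} of the two boundary flux terms and making sure the two inequality hypotheses (one $\geq$, one $\leq$) pull in compatible directions rather than canceling. Concretely, one must verify that with the stated choices of unit normals the flux of $\na w$ out of $\{a\le u\le b\}$ across $\{u=a\}$ and across $\{u=b\}$ both enter the integrated Bochner identity with the sign that, combined with $\int 2|\na^2 f|^2_g\ge 0$, forces equality; this is where the precise form of \eqref{eq:formula_H_H_g} and the careful identification of "inward/outward" in Theorems~\ref{thm:Lore_Virgi} and~\ref{thm:Lore_Virgi_delta} must be reconciled on a \emph{two}-sided domain. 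A secondary subtlety is that a priori one does not know $\{u=t\}$ is regular for every $t\in(a,b)$, so the integration region should first be taken between two nearby regular level sets and a limiting/coarea argument used — but once $|\na f|_g$ is shown constant, \eqref{eq:gradienti_u_f} retroactively guarantees regularity of all intermediate level sets, closing the loop exactly as in the proof of Theorem~\ref{thm:Lore_Virgi}.
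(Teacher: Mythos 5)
Your overall strategy is the paper's: translate the two pointwise hypotheses, via \eqref{eq:formula_H_H_g}, into $\HHH_g\geq 0$ on $\{f=\log a\}$ and $\HHH_g\leq 0$ on $\{f=\log b\}$, integrate the Bochner identity \eqref{eq:Bochner_g} over the slab between the two level sets to force $\na^2 f\equiv 0$ there, and then conclude by the splitting/Weyl argument of Theorem \ref{prop:LCF_sfera} (in a local version, with a finite interval in place of $[0,+\infty)$), using the connectedness of one --- hence, once $|\na f|_g$ is a positive constant and all intermediate level sets are diffeomorphic, of every --- level set in the slab. Two of your ``loose ends'' are in fact non-issues: the whole analysis is confined to the compact region $\{a\le u\le b\}$, so neither the behaviour at infinity nor at the puncture enters at all, and critical points strictly between the two regular level sets do not obstruct the Divergence Theorem, whose boundary contributions live only on $\{u=a\}\cup\{u=b\}$.

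The one step that, as written, does not close is the integration by parts. If you integrate $\Delta_g w+\langle\na w\,|\,\na f\rangle_g=2|\na^2 f|_g^2$, $w=|\na f|_g^2$, over $\{\log a<f<\log b\}$ \emph{without a weight}, the term $\Delta_g w$ does produce the fluxes $\pm\,2\!\int|\na f|_g^2\,\HHH_g\,\rmd\sigma_g$ you want (since $\langle\na w\,|\,\na f\rangle_g=2|\na f|_g^3\,\HHH_g$ on a level set), but the drift term, being ${\rm div}_g(w\na f)$ because $\Delta_g f=0$, contributes the \emph{additional} pair $\int_{\{f=\log b\}}|\na f|_g^3\,\rmd\sigma_g-\int_{\{f=\log a\}}|\na f|_g^3\,\rmd\sigma_g$, whose sign is controlled by no hypothesis. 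This is exactly what the weight in Corollary \ref{cor:int_pimo} is designed to kill: choosing $\varphi(f)=f$ makes the first integral $K=(1-\varphi'){\rm e}^{\varphi}$ vanish, so the $|\na f|_g^3$ boundary terms drop out of \eqref{eq:fi_ganza_partic} and one is left with
\begin{equation*}
0\,\le\,2\!\!\!\!\!\int\limits_{\{\log a<f<\log b\}}\!\!\!\!\!|\na^2f|_g^2\,{\rm e}^f\,\rmd\mu_g
\,=\,2b\!\!\!\int\limits_{\{f=\log b\}}\!\!\!|\na f|_g^2\,\HHH_g\,\rmd\sigma_g
\,-\,2a\!\!\!\int\limits_{\{f=\log a\}}\!\!\!|\na f|_g^2\,\HHH_g\,\rmd\sigma_g\,\le\,0,
\end{equation*}
whence $\na^2f\equiv0$. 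So the ``sign bookkeeping'' you flag as the main obstacle cannot be resolved by bookkeeping alone: you must integrate the weighted identity ${\rm div}_g({\rm e}^f\na w)=2\,{\rm e}^f|\na^2f|_g^2$ rather than the unweighted one. With that substitution your argument coincides with the paper's proof.
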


More in general, this theorem holds if we replace the pointwise conditions on the level sets $\{u=a\}$ and $\{u=b\}$ with the corresponding integral conditions modeled on (\ref{eq:cond_Lore_Virgi}), as it will be clear from the arguments which are developed in the following sections.
It would be interesting to investigate the relationship between the  two-boundaries overdetermining condition in Theorem \ref{thm:2bordi} and other overdetermining conditions involving two boundaries, such as the ones given in 
\cite{CiMaSa} and \cite{Salani}.

\smallskip

We believe that the approach that we have developed to deal with the classical problems presented in this paper may represent a starting point for future investigations. In particular, we would like to employ the same machinery to treat overdetermined problems for certain types of semilinear and possibly quasilinear equations.

\section{A conformally equivalent formulation of the problems}
\label{sec:reformulation}

Let us consider a solution $u$ to either problem~(\ref{eq:pb}) or problem~(\ref{eq:pb_delta}). Note that in the first case $0<u<1$, whereas in the second case we can suppose that $u>0$, up to an additive constant. We refer the reader to the Appendix for a description of the basic qualitative properties of the solutions.
Here, we present an equivalent formulation of problems~ (\ref{eq:pb}) and (\ref{eq:pb_delta}), which is based on a conformal change of the Euclidean metric.
To set up the notation, we let $M$ be either $\R^n\setminus\Om$ in the case of problem~(\ref{eq:pb}) or $\overline\Om\setminus\{0\}$ in the case of problem~(\ref{eq:pb_delta}).
We denote by $g_{\R^n}$ the flat Euclidean metric of $\R^n$,
and we consider the conformally equivalent metric given by
\begin{equation}
\label{eq:tilde_g}
g:=u^{\frac2{n-2}}g_{\R^n}.
\end{equation}
For future convenience, we set $f:=\log u$, so that the metric $g$ can be equivalently written as
\begin{equation}
\label{eq:tilde_g_con_f}
g={\rm e}^{\frac{2f}{n-2}}g_{\R^n}.
\end{equation}
To proceed, we fix local coordinates $\{x^{\alpha}\}_{\alpha=1}^n$ in $M$
and we use the formulas from 
\cite[pag 42]{Haw_book}
to deduce that
\begin{equation*}
\Gamma_{\alpha\beta}^{\gamma}
=\GGG_{\alpha\beta}^{\gamma}+\frac1{n-2}\bigg(\delta_{\alpha}^{\gamma}\pa_{\beta}f
                      +\delta_{\beta}^{\gamma}\pa_{\alpha}f-g_{\alpha\beta}^{\R^n}\,g_{\R^n}^{\gamma\eta}\,\pa_{\eta}f\bigg),                                                
\end{equation*}
where  $\GGG_{\alpha\beta}^{\gamma}$ and $\Gamma_{\alpha\beta}^{\gamma}$     
are the Christoffel symbols associated with the metric $g_{\R^n}$ and $g$, respectively.
In what follows we will denote by $\DDD$ and $\na$ the covariant derivatives of the metrics $g_{\R^n}$ and $g$, respectively. The symbols $\DDD^2$, $\na^2$ and $\Delta$, $\Delta_g$ will stand for the corresponding Hessians and Laplacian operators.  
Notice that throughout this paper the Einstein summation convention for the sum over repeated indices is adopted. 
As a consequence of the previous formula, we get
\begin{align}
\na^2_{\alpha\beta}w&=\frac{\pa^2w}{\pa x^{\alpha}\pa x^{\beta}}
                                  -\Gamma_{\alpha\beta}^{\gamma}\frac{\pa w}{\pa x^{\gamma}}\nonumber\\
               &=\DDD^2_{\alpha\beta}w-\frac1{n-2}\bigg(\pa_{\alpha}w\,\pa_{\beta}f
                            +\pa_{\beta}w\,\pa_{\alpha}f
                   -\langle\DDD w|\DDD f\rangle\,g_{\alpha\beta}^{\R^n}\bigg),\label{eq:hess_tilde}                   
\end{align}
for any given $w\in{\rm C}^2(M)$. 
By taking the trace of~(\ref{eq:hess_tilde}), we obtain
\begin{equation}
\label{eq:delta_tilde}
\Delta_gw:=g^{\alpha\beta}\,\na^2_{\alpha\beta}w
={\rm e}^{-\frac{2f}{n-2}}\left(\Delta w+\langle\DDD w|\DDD f\rangle\right).
\end{equation}
In particular, one has
$\Delta_gf=0$,
because
\begin{equation}
\label{eq:delta_f}
\Delta f=-|\DDD f|^2.
\end{equation}   
Observe that \cite[Theorem 1.159]{Besse_book} implies that the Ricci tensor $\Ric_g=\RRR_{\alpha\beta}\,dx^{\alpha}\otimes~dx^{\beta}$ of the metric $g$ is related to the one of the metric $g$ by  
\begin{equation*}
\RRR_{\alpha\beta}=\RRR_{\alpha\beta}^{\R^n}-\DDD^2_{\alpha\beta}f+\frac{\pa_{\alpha}f\,\pa_{\beta}f}{n-2}
    -\frac{\Delta f+|\DDD f|^2}{n-2}\,g_{\alpha\beta}^{\R^n}\\
=-\DDD^2_{\alpha\beta}f+\frac{\pa_{\alpha}f\,\pa_{\beta}f}{n-2},
\end{equation*}
where in the second equality we have used 
(\ref{eq:delta_f}) and the fact that $\Ric_{\R^n}=0$.
Therefore,  from (\ref{eq:hess_tilde}) we get
\begin{equation}
\label{eq:tilde_Ric_f}
\Ric_g+\na^2f+\frac{df\otimes df}{n-2}\,=\,\frac{|\na f|^2_g}{n-2}\,g,
\end{equation}
since
$|\na f|^2_g\,g=|\DDD f|^2\,g_{\R^n}$.
We remark en passant that the latter identity implies in particular that
\begin{equation}
\label{eq:P_function}
|\na f|^2_g=\frac{|\DDD u|^2}{u^{2\frac{n-1}{n-2}}}.
\end{equation}
This formula says that the function $|\na f|^2_g$ coincides with the $P$-function usually associated with problems (\ref{eq:pb}) and (\ref{eq:pb_delta}), see \cite[formula (1.5)]{Ga_Sa} and \cite[formula (7)]{Enc_Per}, respectively.

We are now in the position to reformulate problem~(\ref{eq:pb})
as
\begin{equation}
\label{eq:pb_reform}
\left\{
\begin{array}{rcll}
\displaystyle
\phantom{\frac12}\Delta_gf\!\!\!\!&=&\!\!\!\!0 & {\rm in }\quad M,\\
\displaystyle
\Ric_g+\na^2f+\frac{df\otimes df}{n-2}
\!\!\!\!&=&\displaystyle\!\!\!\!\frac{|\na f|^2_g}{n-2}\,g, & {\rm in }\quad M,\\
\displaystyle
 \phantom{\frac12}f\!\!\!\!&=&\!\!\!\!0  &{\rm on }\ \ \pa M,\\
\displaystyle
\phantom{\frac12}f(x)\!\!\!\!&\to&\!\!\!\!-\infty & \mbox{as }\ x\to\infty,
\end{array}
\right.
\end{equation}
and problem~(\ref{eq:pb_delta}) as
\begin{equation}
\label{eq:pb_2_reform}
\left\{
\begin{array}{rcll}
\displaystyle
\phantom{\frac12}\Delta_gf\!\!\!\!&=&\!\!\!\!0 & {\rm in }\quad M,\\
\displaystyle
\Ric_g+\na^2f+\frac{df\otimes df}{n-2}
\!\!\!\!&=&\displaystyle\!\!\!\!\frac{|\na f|^2_g}{n-2}\,g, & {\rm in }\quad M,\\
\displaystyle
 \phantom{\frac12}f\!\!\!\!&=&\!\!\!\!\log c &{\rm on }\ \ \pa M.\\
\displaystyle
\phantom{\frac12}f(x)-\log\frac{d\,|\pa\Om|\,|x|^{2-n}}{(n-2)|\Sph^{n-1}|}\!\!\!\!&\to&\!\!\!\!0 & \mbox{as }\ x\to0.
\end{array}
\right.
\end{equation}
Note that the limit in (\ref{eq:pb_2_reform}) has been deduced from Lemma \ref{lem:delta_behaves}.
Taking the trace of the second equation in (\ref{eq:pb_reform}) or in (\ref{eq:pb_2_reform}) gives the expression for the scalar curvature $\RRR_g$ of the metric $g$, namely
\begin{equation*}
\frac{\RRR_g}{n-1}=\frac{|\na f|^2_g}{n-2}.
\end{equation*}
For the forthcoming analysis it is important to study the geometry of the level sets of $f$, which coincide with the level sets of $u$ by definition. To this end, we fix on $M$ the $g_{\R^n}
$-unit vector field $\nu:=-\DDD u/|\DDD u|=-\DDD f/|\DDD f|$ and the $g$-unit vector field $\nu_g:=-\na u/|\na u|_g=-\na f/|\na f|_g$. Consequently, the second fundamental forms of the regular level sets of $u$ or $f$ with respect to the flat ambient metric and the conformally-related ambient metric $g$, are given by
\begin{align*}
h(X,Y)&=-\frac{\DDD^2u(X,Y)}{|\DDD u|}=-\frac{\DDD^2f(X,Y)}{|\DDD f|},\\
h_g(X,Y)&=-\frac{\na^2u(X,Y)}{|\na u|_g}=-\frac{\na^2f(X,Y)}{|\na f|_g},
\end{align*}  
respectively, where $X$ and $Y$ are vector fields tangent to the level sets. In particular, $h$ and $h_g$ are related by
\begin{equation*}
h_g(X,Y)={\rm e}^{\frac f{(n-2)}}\Big(h(X,Y)-\frac{|\DDD f|}{n-2}\langle X|Y\rangle\Big).
\end{equation*}
As a consequence, taking into account the fact that $u$ is $g_{\R^n}$-harmonic and $f$ is $g$-harmonic, we have that
the mean curvatures are given by
\begin{equation}
\label{eq:formula_curvature}
\HHH=\frac{\DDD^2 u(\nu,\nu)}{|\DDD u|},
\qquad\qquad
\HHH_g=\frac{\na^2 f(\nu_g,\nu_g)}{|\na f|_g}.
\end{equation}
They are related by the formula
\begin{equation}
\label{eq:formula_H_H_g}
\frac{\HHH_g}{n-1}={\rm e}^{-\frac{f}{n-2}}
\bigg(\frac{\HHH}{n-1}-\frac{|\DDD f|}{n-2}\bigg).
\end{equation}
Having this at hand, we notice that 
the quantity appearing in hypothesis (\ref{eq:cond_Lore_Virgi}) of Theorem \ref{thm:Lore_Virgi} 
can be rewritten as
\begin{equation}
\label{eq:ipotesi_riscritta}
\int\limits_{\{u=c\}}|\DDD u|^2
\left[\frac{\HHH}{n-1}-\frac{|\DDD u|}{(n-2)\,u}\right]\rmd\sigma
\,=\,
c^{\frac n{n-2}}\!\!\!\!\!\!\int\limits_{\{f=\log c\}}|\na f|_g^2\,\frac{\HHH_g}{n-1}\,\rmd\sigma_g.
\end{equation}
As it will be clear at a later stage, this quantity turns out to be relevant also in the proof of Theorem \ref{thm:Lore_Virgi_delta}, through hypothesis (\ref{eq:cond_Lore_Virgi_delta}).


\section{Some consequences of the Bochner formula}
\label{sec:int_per_parti}

For what follows it is useful to specialize the Bochner formula
\[
\frac12\Delta_g|\na f|_g^2=|\na^2f|^2_g+\Ric_g(\na f,\na f)+\big\langle\na(\Delta_gf)\big|\na f\big\rangle_g
\]
to our problems. In particular, from equation (\ref{eq:tilde_Ric_f}) we get
\begin{equation}
\label{eq:Bochner_g}
\Delta_g|\na f|_g^2
=2|\na^2f|^2_g-\big\langle\na|\na f|^2_g\,\big|\,\na f\big\rangle_g.
\end{equation}
In this section we exploit some concequences of this formula in relation to some overdetermining conditions under which the solutions to problems (\ref{eq:pb_reform}) and (\ref{eq:pb_2_reform}) can be proved to be affine.

We start by discussing the integral-type conditions, which
involve the right-hand side of (\ref{eq:ipotesi_riscritta}).
Integrating identity (\ref{eq:Bochner_g}) by parts with respect to a weighted measure we obtain the following general formula.

\begin{lemma}
Given $a$, $b\in f(M)$ such that $a<b$, and any $\varphi:f(M)\to\R$, 
we have that
\begin{align}\label{eq:fi_ganza}
2\int\limits_{\{a<f<b\}}|\na^2f|_g^2 \,\, {\rm e}^{\varphi(f)}\,{\rm d}\mu_g=&
\int\limits_{\{a<f<b\}}\left[\varphi''(f)+(\varphi'(f))^2-\varphi'(f)\right]|\na f|_g^4\,\,{\rm e}^{\varphi(f)}\,{\rm d}\mu_g\nonumber
\\
&+[1-\varphi'(b)]{\rm e}^{\varphi(b)}\!\!\!\int\limits_{\{f=b\}}  |\na f|_g^3\,{\rm d}\sigma_g   
\,+\,{\rm e}^{\varphi(b)}\!\!\!\!\!\int\limits_{\{f=b\}}2|\na f|_g^2\,\HHH_g\,{\rm d}\sigma_g
\nonumber\\
&-[1-\varphi'(a)]{\rm e}^{\varphi(a)}\!\!\!\int\limits_{\{f=a\}}|\na f|^3_g\,{\rm d}\sigma_g
\,-\,{\rm e}^{\varphi(a)}\!\!\!\!\!\int\limits_{\{f=a\}}2|\na f|_g^2\,\HHH_g\,{\rm d}\sigma_g
\end{align}
\end{lemma}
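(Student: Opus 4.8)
The plan is to integrate the Bochner identity \eqref{eq:Bochner_g} against the weighted volume measure ${\rm e}^{\varphi(f)}\,{\rm d}\mu_g$ on the slab $\{a<f<b\}$ and to pass all the $f$-derivatives onto the weight. First I would rewrite the left-hand side of \eqref{eq:Bochner_g} so that it is in divergence form: from $\Delta_g|\na f|_g^2+\langle\na|\na f|_g^2\,|\,\na f\rangle_g=2|\na^2f|_g^2$ and the fact that $\Delta_gf=0$ one gets
\[
{\rm e}^{\varphi(f)}\,2|\na^2f|_g^2 \,=\, \mathrm{div}_g\!\left({\rm e}^{\varphi(f)}\na|\na f|_g^2\right)-\varphi'(f)\,{\rm e}^{\varphi(f)}\langle\na f\,|\,\na|\na f|_g^2\rangle_g+{\rm e}^{\varphi(f)}\langle\na|\na f|_g^2\,|\,\na f\rangle_g.
\]
Then I would apply the Divergence Theorem on $\{a<f<b\}$; the boundary $\partial\{a<f<b\}=\{f=a\}\cup\{f=b\}$ with $g$-unit outer normals $\pm\nu_g$, and since $\nu_g=-\na f/|\na f|_g$, the flux term produces $\mp{\rm e}^{\varphi(b)}\langle\na|\na f|_g^2\,|\,\na f\rangle_g/|\na f|_g$ evaluated on the level sets (with the appropriate sign on each component). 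This is where the hypothesis that we are on a regular slab matters, so $\nu_g$ is well defined on both level sets.

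The second step is to turn the remaining interior integrand $\langle\na|\na f|_g^2\,|\,\na f\rangle_g\,[1-\varphi'(f)]\,{\rm e}^{\varphi(f)}$, which is itself $\langle\na f\,|\,\na\psi(f)\rangle_g$-type once one notices $\langle\na|\na f|_g^2\,|\,\na f\rangle_g$ is a function of $f$ along... — more honestly, I would instead handle it directly by a second integration by parts: write $[1-\varphi'(f)]\,{\rm e}^{\varphi(f)}=\Psi'(f)$ is not quite available, so the cleaner route is to integrate $\langle\na|\na f|_g^2\,|\,\na f\rangle_g$ against $[1-\varphi'(f)]{\rm e}^{\varphi(f)}$ by parts, moving $\na$ off $|\na f|_g^2$, which again yields a divergence term (boundary integrals of $|\na f|_g^2\langle\na f\,|\,\nu_g\rangle_g$-type, i.e. $\mp|\na f|_g^3$ on $\{f=b\}$, $\{f=a\}$) plus an interior term $-\mathrm{div}_g\!\big([1-\varphi'(f)]{\rm e}^{\varphi(f)}\na f\big)\,|\na f|_g^2$. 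Expanding that divergence using $\Delta_gf=0$ gives exactly $-\big[\varphi''(f)+(\varphi'(f))^2-\varphi'(f)\big]|\na f|_g^2\,{\rm e}^{\varphi(f)}$ times $|\na f|_g^2$, which is the claimed bulk term. The mean-curvature boundary terms $2{\rm e}^{\varphi}\!\int|\na f|_g^2\HHH_g\,{\rm d}\sigma_g$ arise from the first flux term above: on a level set, $\langle\na|\na f|_g^2\,|\,\na f\rangle_g/|\na f|_g=2\na^2f(\na f,\na f)/|\na f|_g$, and by \eqref{eq:formula_curvature} this equals $2|\na f|_g^2\,\HHH_g$ (up to sign) — here one uses that the tangential part of $\na f$ vanishes on a level set, so the full Hessian contracted with $\na f$ reduces to its normal-normal component, which is $-\HHH_g|\na f|_g$ by \eqref{eq:formula_curvature}.

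Finally I would collect the boundary contributions on $\{f=b\}$: the $|\na f|_g^3$ flux term with coefficient $[1-\varphi'(b)]{\rm e}^{\varphi(b)}$ and the mean-curvature term with coefficient $2{\rm e}^{\varphi(b)}$; and symmetrically on $\{f=a\}$ with the opposite overall sign (outer normal reversed). Matching these against \eqref{eq:fi_ganza} finishes the proof. The one point demanding care — the main obstacle — is keeping all the signs straight: the orientation convention $\nu_g=-\na f/|\na f|_g$ together with the sign in \eqref{eq:formula_curvature} and the fact that $\{f=b\}$ and $\{f=a\}$ sit on opposite sides of the slab means a factor $(-1)$ is lurking in several places, and it must be tracked consistently so that the $\{f=b\}$ terms come with $+$ and the $\{f=a\}$ terms with $-$ as in the statement. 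A secondary technical point is justifying the integrations by parts and the finiteness of all integrals, which on a regular bounded slab is routine given the smoothness and the asymptotics recorded in the Appendix lemmas.
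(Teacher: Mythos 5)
Your proposal is correct and follows essentially the same route as the paper: multiply the Bochner identity by ${\rm e}^{\varphi(f)}$, integrate by parts twice over the slab (once for the Laplacian term and once for the combined drift term carrying the weight $[1-\varphi'(f)]\,{\rm e}^{\varphi(f)}$, using $\Delta_g f=0$ to produce the bulk term), and identify the flux of $\na|\na f|_g^2$ through the level sets with $2|\na f|_g^2\,\HHH_g$. On the one sign you leave open: since $\nu_g=-\na f/|\na f|_g$ enters the normal--normal Hessian quadratically, one has $\big\langle\na|\na f|_g^2\,\big|\,\na f\big\rangle_g=2\,\na^2f(\na f,\na f)=2|\na f|_g^3\,\HHH_g$ with no residual minus sign, after which the opposite orientations of the outer normal on $\{f=a\}$ and $\{f=b\}$ alone account for the relative signs in \eqref{eq:fi_ganza}.
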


\begin{proof}
First, multiplying equation (\ref{eq:Bochner_g}) by ${\rm e}^{\varphi(f)}$ and integrating by parts twice on the set $\{a<f<b\}$, one gets
\begin{multline}\label{eq:per_fi_ganza}
2\int\limits_{\{a<f<b\}}|\na^2f|_g^2 \,\, {\rm e}^{\varphi(f)}\,{\rm d}\mu_g=
\int\limits_{\{a<f<b\}}\left[\varphi''(f)+(\varphi'(f))^2-\varphi'(f)\right]|\na f|_g^4\,\,{\rm e}^{\varphi(f)}\,{\rm d}\mu_g
\\
+
\!\!\!\!\!\int\limits_{\pa\{a<f<b\}}\left[1-\varphi'(f)\right]|\na f|_g^2\,\,\left\langle\na f\,|\,\nu\right\rangle_g\,\,{\rm e}^{\varphi(f)}\,{\rm d}\sigma_g
\,-
\!\!\!\!\!\int\limits_{\pa\{a<f<b\}}\left\langle\na|\na f|_g^2\,|\,\nu\right\rangle_g\,\,{\rm e}^{\varphi(f)}\,{\rm d}\sigma_g,
\end{multline}
where $\nu$ is the outer unit normal of $\{a<f<b\}$ on $\pa\{a<f<b\}$.
Noticing that $\nu=\na f/|\na f|_g$ on $\{f=b\}$ and $\nu=-\na f/|\na f|_g$ on $\{f=a\}$, and using the formula 
\begin{equation*}
\left\langle\na|\na f|_g^2\,|\,\na f\right\rangle_g=2|\na f|^3\,\HHH_g,
\end{equation*}
equation (\ref{eq:fi_ganza}) easily follows from (\ref{eq:per_fi_ganza}).
\end{proof}

If we choose the weighting function $\varphi$ so that
$\varphi''+(\varphi')^2-\varphi'=0$, then the quantity
$(1-\varphi'){\rm e}^{\varphi}$ is a first integral and we obtain the following corollary.

\begin{corollary}
\label{cor:int_pimo}
If $\varphi:I\subseteq f(M)\to\R$ is a solution to $\varphi''+(\varphi')^2-\varphi'=0$ defined on the maximal interval of existence $I$, then, setting
\begin{equation*}
K:=(1-\varphi'){\rm e}^{\varphi},
\end{equation*}
formula (\ref{eq:fi_ganza}) particularizes to
\begin{align}
\label{eq:fi_ganza_partic}
2\int\limits_{\{a<f<b\}}|\na^2f|_g^2 \,\, {\rm e}^{\varphi(f)}\,{\rm d}\mu_g\,\,=
&
\,\,\,\,K\,\bigg(\int_{\{f=b\}}  |\na f|_g^3\,{\rm d}\sigma_g-   
\int_{\{f=a\}}|\na f|^3_g\,{\rm d}\sigma_g\bigg)\nonumber\\
&
+{\rm e}^{\varphi(b)}\!\!\!\!\!\int\limits_{\{f=b\}}2|\na f|_g^2\,\HHH_g\,{\rm d}\sigma_g\,-\,{\rm e}^{\varphi(a)}\!\!\!\!\!\int\limits_{\{f=a\}}2|\na f|_g^2\,\HHH_g\,{\rm d}\sigma_g,
\end{align}
for every $a,b\in I$ with $a<b$.
\end{corollary}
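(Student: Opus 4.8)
The plan is to substitute the defining ODE for $\varphi$ directly into the general identity \eqref{eq:fi_ganza} and to notice that two simplifications occur at the same time. First I would observe that the bracket $\varphi''(f)+(\varphi'(f))^2-\varphi'(f)$ multiplying the bulk integral $\int_{\{a<f<b\}}|\na f|_g^4\,{\rm e}^{\varphi(f)}\,{\rm d}\mu_g$ on the right-hand side of \eqref{eq:fi_ganza} vanishes identically, since $\varphi$ solves $\varphi''+(\varphi')^2-\varphi'=0$ on $I$ and $a,b\in I$. Hence that term disappears entirely.

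The second observation is that $K:=(1-\varphi')\,{\rm e}^{\varphi}$ is constant along any solution of the ODE, i.e.\ it is a first integral of the associated flow. To check this I would simply differentiate: $\frac{d}{ds}\big[(1-\varphi'(s))\,{\rm e}^{\varphi(s)}\big]={\rm e}^{\varphi(s)}\big(-\varphi''(s)+\varphi'(s)-(\varphi'(s))^2\big)=-{\rm e}^{\varphi(s)}\big(\varphi''(s)+(\varphi'(s))^2-\varphi'(s)\big)=0$. Consequently $[1-\varphi'(b)]\,{\rm e}^{\varphi(b)}=[1-\varphi'(a)]\,{\rm e}^{\varphi(a)}=K$ for all $a,b\in I$.

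Plugging these two facts into \eqref{eq:fi_ganza}, the bulk term drops out, the coefficients $[1-\varphi'(b)]\,{\rm e}^{\varphi(b)}$ and $[1-\varphi'(a)]\,{\rm e}^{\varphi(a)}$ of the two cubic boundary integrals both collapse to the single constant $K$, and regrouping the resulting $K$-terms into one difference produces exactly \eqref{eq:fi_ganza_partic}; the two mean-curvature boundary integrals are carried over unchanged. So the proof is essentially a one-line algebraic manipulation once the first integral is identified.

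There is no substantial obstacle here. The only point worth a moment's care is that speaking of $K$ as a \emph{single} constant presupposes that any $a<b$ lie in a common interval, which is guaranteed precisely because $I$ is taken to be the maximal interval of existence; it is this connectedness of $I$ that makes $(1-\varphi')\,{\rm e}^{\varphi}$ a well-defined first integral independent of the chosen level. For completeness one might also remark that the ODE $\varphi''+(\varphi')^2-\varphi'=0$ can be integrated explicitly (via $\psi=\varphi'$, which solves a Bernoulli-type equation), exhibiting nonconstant solutions, but this is not needed for the statement.
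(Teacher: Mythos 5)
Your proposal is correct and is exactly the argument the paper intends: the sentence preceding the corollary already notes that the ODE kills the bulk term in \eqref{eq:fi_ganza} and that $(1-\varphi'){\rm e}^{\varphi}$ is a first integral, so the two cubic boundary coefficients collapse to the single constant $K$. Your explicit differentiation verifying the first integral and the remark on the connectedness of $I$ are the only details the paper leaves implicit.
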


Specializing this corollary to a suitable choice of the function 
$\varphi$, with the following two propositions we prove that the harmonic function $f$ in problems (\ref{eq:pb_reform}) and (\ref{eq:pb_2_reform}) is indeed an affine function, provided appropriate integral conditions are satisfied. 

\begin{proposition}
\label{prop:int_part_1}
Let $f$ be a smooth function satisfying problem~(\ref{eq:pb_reform}) in $M=\R^n\setminus\overline\Om$, where $g$ is metric defined in (\ref{eq:tilde_g_con_f}). If the condition
\begin{equation}
\label{eq:cond_curv_media_1}
\int\limits_{\{f=\log c\}}|\na f|_g^2\,\HHH_g\,\rmd\sigma_g\leq0,
\end{equation}
is fulfilled for some $0<c\leq1$, then $\na^2f=0$ in $\{f<\log c\}$.
\end{proposition}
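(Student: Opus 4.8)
The strategy is to apply Corollary \ref{cor:int_pimo} with a cleverly chosen $\varphi$ so that the first-integral constant $K$ vanishes, thereby killing the two cubic boundary terms, and then to exploit the asymptotic decay of $f$ at infinity so that the outer boundary term at $b\to\sup f$ disappears as well. What remains is an inequality forcing the Hessian to vanish.

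\textbf{First step: solving the ODE for $\varphi$ with $K=0$.} We look for a solution of $\varphi''+(\varphi')^2-\varphi'=0$ with $1-\varphi'\equiv 0$, i.e.\ $\varphi'\equiv 1$; this indeed satisfies the equation trivially. So take $\varphi(f)=f$ (equivalently $\mathrm{e}^{\varphi(f)}=\mathrm{e}^f=u$), which gives $K=(1-\varphi')\mathrm{e}^{\varphi}=0$. With this choice, formula (\ref{eq:fi_ganza_partic}) reduces to
\[
2\int\limits_{\{a<f<b\}}|\na^2f|_g^2\,\mathrm{e}^{f}\,\rmd\mu_g
=\mathrm{e}^{b}\!\!\!\int\limits_{\{f=b\}}2|\na f|_g^2\,\HHH_g\,\rmd\sigma_g
-\mathrm{e}^{a}\!\!\!\int\limits_{\{f=a\}}2|\na f|_g^2\,\HHH_g\,\rmd\sigma_g,
\]
valid for every $a<b$ in the interval of existence of $\varphi$, which here is all of $\R$.

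\textbf{Second step: choosing $b=\log c$ and letting $a\to-\infty$.} Fix $b=\log c$; the corresponding boundary term is $\le 0$ by hypothesis (\ref{eq:cond_curv_media_1}). It then suffices to show that the inner boundary term, along $\{f=a\}$ with $a\to-\infty$ (that is, $\{u=\mathrm{e}^a\}$ with $\mathrm{e}^a\to 0$, which by properness of $u$ corresponds to $|x|\to\infty$), tends to zero. Using (\ref{eq:P_function}) and (\ref{eq:formula_H_H_g}) one rewrites
\[
\mathrm{e}^{a}\!\!\!\int\limits_{\{f=a\}}|\na f|_g^2\,\HHH_g\,\rmd\sigma_g
\]
in terms of Euclidean quantities on $\{u=\mathrm{e}^a\}$; invoking the asymptotic expansions of $u$ and $\DDD u$ (Lemmata \ref{lem:u_behaves} and \ref{lem:u_deriv_behaves}) — namely $u\sim \gamma|x|^{2-n}$, $|\DDD u|\sim (n-2)\gamma|x|^{1-n}$ and $\HHH\sim (n-1)/|x|$ — one checks that this boundary integral is $O(|x|^{-1})\to 0$ as $|x|\to\infty$. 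Passing to the limit, we conclude $\int_M |\na^2 f|_g^2\,\mathrm{e}^f\,\rmd\mu_g$ restricted to $\{f<\log c\}$ is $\le 0$, hence $\na^2 f=0$ on $\{f<\log c\}$.

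\textbf{Main obstacle.} The delicate point is the decay estimate for the inner boundary term: one must justify that the level sets $\{u=\mathrm{e}^a\}$ are regular for $a$ sufficiently negative (so the integral makes sense), control $\HHH$ on them, and verify the rate at which the integral vanishes — all of which rely on the precise asymptotics of the harmonic function $u$ and its derivatives near infinity collected in the Appendix. A secondary subtlety is making sure the integrations by parts leading to (\ref{eq:fi_ganza}) are legitimate on the unbounded region $\{a<f<\log c\}$, which is again handled by the same asymptotic control together with finiteness of the relevant integrals.
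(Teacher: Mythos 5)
Your overall route is exactly the paper's: take $\varphi(f)=f$ so that $K=(1-\varphi')\mathrm{e}^{\varphi}\equiv 0$, apply Corollary \ref{cor:int_pimo} on $\{a<f<\log c\}$, use hypothesis (\ref{eq:cond_curv_media_1}) for the term at $\{f=\log c\}$, and show the term at $\{f=a\}$ vanishes as $a\to-\infty$. The one place where your justification over-reaches is the treatment of that inner boundary term: you invoke ``asymptotic expansions'' $u\sim\gamma|x|^{2-n}$, $|\DDD u|\sim(n-2)\gamma|x|^{1-n}$ and $\HHH\sim(n-1)/|x|$ and attribute them to Lemmata \ref{lem:u_behaves} and \ref{lem:u_deriv_behaves}, but those lemmata only give a two-sided bound on $u$ and \emph{upper} bounds on $|\DDD u|$ and $|\DDD^2u|$; in particular they give no lower bound on $|\DDD u|$, which you would need to control $\HHH=\DDD^2u(\nu,\nu)/|\DDD u|$ from above (the expansions themselves are classical, via the Kelvin transform and positivity of the capacity, but they are not what is proved in the Appendix). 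The paper avoids this entirely: using (\ref{eq:formula_curvature}) it bounds $|\na f|_g^2\,\HHH_g=|\na f|_g\,\na^2f(\nu_g,\nu_g)\le|\na f|_g\,|\na^2f|_g$, then checks from the crude bounds of the Appendix that $|\na f|_g$, $|\na^2f|_g$ and the $g$-area of $\{f=\log\ep\}$ are all uniformly bounded, so the explicit prefactor $\ep$ alone kills the term. If you want your version to be self-contained with only the stated lemmata, you should replace the asymptotic-expansion step by this Cauchy--Schwarz bound (or separately establish the lower bound $|\DDD u|\ge c|x|^{1-n}$ near infinity). A minor further remark: the region $\{a<f<\log c\}=\{\mathrm{e}^a<u<c\}$ is bounded, since $u\to0$ at infinity, so the ``unbounded region'' worry you raise about the integration by parts does not actually arise; what does need a word is the regularity of the level set $\{f=a\}$, which holds for all $a$ sufficiently negative by Remark \ref{rm:diffeo} (or for a.e.\ $a$ by Sard).
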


\begin{proof}
We consider the function $\varphi(f)=f$, which satisfies the equation $\varphi''+(\varphi')^2-\varphi'=0$ with the first integral $K=(1-\varphi'){\rm e}^{\varphi}\equiv0$. Therefore, we can apply Corollary \ref{cor:int_pimo} with $a=\log\ep$ and $b=\log c$, for some small $\ep>0$, obtaining
\begin{equation*}
\int\limits_{\{\log\ep\,<f<\log c\}}\!\!\!\!\!\!|\na^2f|_g^2 \,\,{\rm e}^f\,{\rm d}\mu_g
\,=\,
c\!\!\!\!\!\!\int\limits_{\{f=\log c\}}\!\!\!\!\!\!|\na f|_g^2\,\HHH_g\,{\rm d}\sigma_g
\,\,-\,\,\ep\!\!\!\!\!\!\int\limits_{\{f=\log\ep\}}\!\!\!\!\!|\na f|_g^2\,\HHH_g\,{\rm d}\sigma_g.
\end{equation*}   
Next we are going to show that
\begin{equation}
\label{eq:lim_in_prop}
\lim_{\ep\to0}\,\,\,
\ep\!\!\!\!\!\!\!\int\limits_{\{f=\log\ep\}}\!\!\!\!\!|\na f|_g^2\,\HHH_g\,{\rm d}\sigma_g=0.
\end{equation}
Combining this fact with the hypothesis (\ref{eq:cond_curv_media_1}), the statement follows at once.
To check (\ref{eq:lim_in_prop}), we first observe that 
\begin{equation*}
\int\limits_{\{f=\log\ep\}}\!\!\!\!\!|\na f|_g^2\,\HHH_g\,{\rm d}\sigma_g
\,\,=\!\!\!\!\!
\int\limits_{\{f=\log\ep\}}\!\!\!\!\!|\na f|_g\,\na^2f(\nu_g,\nu_g)\,{\rm d}\sigma_g
\,\,\leq\!\!\!\!\!
\int\limits_{\{f=\log\ep\}}\!\!\!\!\!|\na f|_g\,|\na^2f|_g\,{\rm d}\sigma_g,
\end{equation*}
where in the equality we have used (\ref{eq:formula_curvature}).
By Lemmata \ref{lem:u_behaves} and \ref{lem:u_deriv_behaves} and by the identity 
$|\na f|_g=u^{-\frac{n-1}{n-2}}|\DDD u|$,
it is easy to see that there exists a constant $A_1>0$ such that 
\begin{equation}
\label{eq:na_f_bounded}
|\na f|_g\leq A_1.
\end{equation}
Now, using (\ref{eq:hess_tilde}) and making $u$ explicit, we have that
\begin{align}
\big|\na^2f\big|^2_g
&=
\bigg|\frac{\DDD^2u}u-\frac n{n-2}\frac{du\otimes du}{u^2}+\frac1{n-2}\left|\frac{\DDD u}u\right|^2\!\!g_{\R^n}\bigg|_g^2
\nonumber\\
&=
u^{-\frac4{n-2}}\bigg(\left|\frac{\DDD^2 u}u\right|^2+\frac{n(n-1)}{(n-2)^2}\left|\frac{\DDD u}u\right|^4-\,\frac{2n}{n-2}\frac{\DDD^2u(\DDD u,\DDD u)}{u^3}\bigg)
\nonumber\\
&\leq 
A_2\,u^{-\frac4{n-2}}\bigg(\left|\frac{\DDD^2 u}u\right|^2+\left|\frac{\DDD u}u\right|^4\bigg),
\label{eq:per_D^2_bdd}
\end{align}
for some constant $A_2>0$.
Note that in the last inequality we have used the fact that
\[
\left|\frac{\DDD^2u(\DDD u,\DDD u)}{u^3}\right|
\leq
\left|\frac{\DDD^2u}u\right|\left|\frac{\DDD u}u\right|^2
\leq
\frac12\bigg(\left|\frac{\DDD^2u}u\right|^2+\left|\frac{\DDD u}u\right|^4\bigg).
\]
Employing Lemmata \ref{lem:u_behaves} and \ref{lem:u_deriv_behaves}, from (\ref{eq:per_D^2_bdd}) we can deduce that
there exists $A_3>0$ such that
\begin{equation}
\label{eq:D^2_bdd}
|\na^2f|_g\leq A_3.
\end{equation}
Finally, observe that
\begin{equation*}
\int\limits_{\{f=\log\ep\}}\!\!{\rm d}\sigma_g
\,=\!\!\!
\int\limits_{\{u=\ep\}}\!u^{\frac{n-1}{n-2}}\,{\rm d}\sigma
\,=\,\,
\ep^{\frac{n-1}{n-2}}\!\!
\int\limits_{\{u=\ep\}}\!\!{\rm d}\sigma.
\end{equation*}
Using Lemma \ref{lem:u_behaves}, it is not hard to show that
$|\{u=\ep\}|\leq A_4\,\ep^{-(n-1)/(n-2)}$, for some $A_4>0$, so that
\begin{equation}
\label{eq:vol_bdd}
\int\limits_{\{f=\log\ep\}}\!\!{\rm d}\sigma_g
\leq
A_4.
\end{equation}
Inequalities (\ref{eq:na_f_bounded}), (\ref{eq:D^2_bdd}), and (\ref{eq:vol_bdd}) imply (\ref{eq:lim_in_prop}).
This concludes the proof. 
\end{proof}

The following proposition is the counterpart of the previous one in the interior punctured domain. In this case, since $f(x)\to+\infty$ as $x\to0$, the integrability of the function
\[
|\na^2 f|^2{\rm e}^{\varphi(f)}
\]
on $\{f>\log c\}$ is not a priori guaranteed by the choice 
$\varphi(f)=f$, so that $\varphi$ has to be chosen differently.
The new determination of $\varphi$ implies that the first integral $K$ is in general different from $0$. This justify the introduction of an extra hypothesis on the limiting behavior of a relevant integral quantity. 

\begin{proposition}
\label{prop:int_part_2}
Let $f$ be a smooth function satisfying problem~(\ref{eq:pb_2_reform}) in $M=\Om\setminus\{0\}$, where $g$ is the metric defined in (\ref{eq:tilde_g_con_f}). If the conditions
\begin{equation}
\label{eq:cond_curv_media_2}
\int\limits_{\{f=\log c\}}\!\!\!|\na f|_g^3\,{\rm d}\sigma_g
\,
=
\,\,
\limsup_{t\to\infty}\!\!\!\int\limits_{\{f=\log t\}}\!\!\!|\na f|_g^3\,{\rm d}\sigma_g
\qquad\mbox{ and }\quad
\int\limits_{\{f=\log c\}}|\na f|_g^2\,\HHH_g\,\rmd\sigma_g\geq0
\end{equation}
are fulfilled, 
then $\na^2f=0$ in $\{f>\log c\}$.
\end{proposition}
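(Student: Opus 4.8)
The plan is to mimic the argument of Proposition \ref{prop:int_part_1}, but with a weighting function $\varphi$ adapted to the behaviour $f\to+\infty$ near the origin. First I would look for $\varphi$ solving $\varphi''+(\varphi')^2-\varphi'=0$; writing $\psi=\varphi'$ this is a Riccati-type equation $\psi'=\psi-\psi^2$, whose general solution is $\psi(s)=1/(1+Ce^{-s})$ for a constant $C$, hence $\varphi(s)=s-\log(1+Ce^{-s})+\text{const}$, equivalently $e^{\varphi(s)}$ proportional to $e^s/(1+Ce^{-s})=e^{2s}/(e^s+C)$. Choosing $C>0$ one gets $\varphi'\to 1^-$ as $s\to+\infty$ while $e^{\varphi(s)}$ grows only like $e^{s}$ rather than like $e^{s}$ times the "bad" factor; the point is that with this choice the first integral $K=(1-\varphi')e^{\varphi}$ is a positive constant rather than zero, and — crucially — $e^{\varphi(f)}$ decays fast enough (in the variable $u=e^{f}$, $e^{\varphi}\sim u^2/(u+C)$) that $|\na^2f|^2_g\,e^{\varphi(f)}$ is integrable near $0$. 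I would record the precise constant $K$ and note $K>0$.

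Next I would apply Corollary \ref{cor:int_pimo} with $a=\log c$ and $b=\log t$, and let $t\to+\infty$. This yields
\[
2\!\!\int\limits_{\{f>\log c\}}\!\!\!\!|\na^2f|_g^2\,e^{\varphi(f)}\,{\rm d}\mu_g
=\lim_{t\to\infty}\!\Big[K\!\!\int\limits_{\{f=\log t\}}\!\!\!\!|\na f|_g^3\,{\rm d}\sigma_g+e^{\varphi(\log t)}\!\!\!\!\int\limits_{\{f=\log t\}}\!\!\!\!2|\na f|_g^2\HHH_g\,{\rm d}\sigma_g\Big]
-K\!\!\!\int\limits_{\{f=\log c\}}\!\!\!\!|\na f|_g^3\,{\rm d}\sigma_g-e^{\varphi(\log c)}\!\!\!\!\int\limits_{\{f=\log c\}}\!\!\!\!2|\na f|_g^2\HHH_g\,{\rm d}\sigma_g,
\]
provided the relevant limits exist. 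The boundary term at $\log c$ is controlled from above: the second hypothesis in \eqref{eq:cond_curv_media_2} makes the $\HHH_g$-term nonpositive (with a minus sign), and the first hypothesis says that the $\limsup$ of the $\HHH_g$-term at infinity equals the value of the $|\na f|_g^3$-term at $\log c$ divided by $K$... more precisely the first hypothesis is tailored so that, after using $e^{\varphi(\log t)}\to$ a finite multiple (or after rescaling $\varphi$), the $K\int_{\{f=\log t\}}|\na f|_g^3$ term and the $-K\int_{\{f=\log c\}}|\na f|_g^3$ term cancel in the limit. Thus the right-hand side is $\le 0$, forcing $|\na^2f|_g\equiv 0$ on $\{f>\log c\}$.

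The routine but necessary analytic input is showing that the quantities $\int_{\{f=\log t\}}|\na f|_g^3\,{\rm d}\sigma_g$ and $e^{\varphi(\log t)}\int_{\{f=\log t\}}|\na f|_g^2\HHH_g\,{\rm d}\sigma_g$ have finite $\limsup$ (indeed are bounded) as $t\to\infty$, and that the two $|\na f|_g^3$-integrals pair up correctly under the limit. Here I would use Lemma \ref{lem:delta_behaves} to get the asymptotics of $u$ and $\DDD u$ near the origin — $u\sim$ const $|x|^{2-n}$, $|\DDD u|\sim$ const $|x|^{1-n}$ — whence $|\na f|_g=u^{-(n-1)/(n-2)}|\DDD u|$ stays bounded, $|\na^2 f|_g$ stays bounded (by the same Hessian computation \eqref{eq:per_D^2_bdd} used in the previous proof), and the $g$-area of $\{f=\log t\}$ is comparable to $t^{(n-1)/(n-2)}$ times the Euclidean area of $\{u=t\}$, which is $O(t^{-(n-1)/(n-2)})$; combined with $e^{\varphi(\log t)}=O(t)$ one checks the surface integrals are bounded, and in fact that $e^{\varphi(\log t)}\int_{\{f=\log t\}}|\na f|_g^2\HHH_g\,{\rm d}\sigma_g$ and $K\int_{\{f=\log t\}}|\na f|_g^3\,{\rm d}\sigma_g$ converge along the chosen sequence; the first hypothesis in \eqref{eq:cond_curv_media_2} is precisely what forces the limit of the bracket to be $\le K\int_{\{f=\log c\}}|\na f|_g^3\,{\rm d}\sigma_g$.

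The main obstacle is the bookkeeping at infinity: unlike in Proposition \ref{prop:int_part_1}, where $K=0$ killed the problematic interior boundary term outright, here $K>0$ and one must keep the $|\na f|_g^3$-boundary term alive, so the delicate point is verifying that $\limsup_{t\to\infty} e^{\varphi(\log t)}\int_{\{f=\log t\}}2|\na f|_g^2\HHH_g\,{\rm d}\sigma_g$ is dominated by $K\,\limsup_{t\to\infty}\int_{\{f=\log t\}}|\na f|_g^3\,{\rm d}\sigma_g$ — this is where the factor $(|\Sph^{n-1}|/|\pa\Om|)^{1/(n-1)}$ and the normalization of $\varphi$ enter, and it is exactly the content encoded in the first equality of \eqref{eq:cond_curv_media_2} once one identifies the limiting model (the round cylinder / the exact solution $u_0$ on a ball), on which $\HHH_g\to 0$ and both sides match.
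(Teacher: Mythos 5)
There is a genuine gap, and it sits exactly at the point you yourself flag as ``the main obstacle.'' First, a computational error: the general solution of $\psi'=\psi-\psi^2$ with $\psi=\varphi'$ gives $\varphi(s)=\log(e^s+C)+\mathrm{const}$, so ${\rm e}^{\varphi(s)}\propto e^s+C$ and $K=(1-\varphi'){\rm e}^{\varphi}\propto C$ --- \emph{not} ${\rm e}^{\varphi}\propto e^{2s}/(e^s+C)$. Consequently, for any fixed admissible $\varphi$ with $K>0$ one has ${\rm e}^{\varphi(f)}\sim e^{f}=u\to+\infty$ at the puncture, so the weight does not decay there; your claim that $|\na^2f|^2_g\,{\rm e}^{\varphi(f)}$ is integrable near $0$ rests on this false decay, and in fact ${\rm e}^{f}\,{\rm d}\mu_g=u^{\frac{2n-2}{n-2}}{\rm d}\mu\sim|x|^{-(2n-2)}{\rm d}\mu$ is not integrable near the origin. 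This is precisely the obstruction the paper points out before Proposition \ref{prop:int_part_2}. Second, and fatally, with a fixed weight the inner boundary term ${\rm e}^{\varphi(\log t)}\int_{\{f=\log t\}}2|\na f|_g^2\,\HHH_g\,{\rm d}\sigma_g$ survives as $t\to\infty$: ${\rm e}^{\varphi(\log t)}\sim t$, the $g$-area of $\{f=\log t\}$ is $O(1)$, and $\HHH_g$ is only known to be $O(1/t)$ at best from the expansion of Lemma \ref{lem:delta_behaves}, so this term is $O(1)$ of indeterminate sign. Neither hypothesis in \eqref{eq:cond_curv_media_2} says anything about it (the first concerns only the $|\na f|_g^3$ integrals), and the appeal to ``$\HHH_g\to0$ on the limiting model'' is circular, since the vanishing of $\HHH_g$ is part of what is being proved.

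The paper's resolution is structural rather than asymptotic: it uses a $t$-\emph{dependent} family of weights $\varphi_t(f)=\log\big(1-{\rm e}^f/t\big)$, i.e.\ the branch of the ODE with $C=-t<0$, normalized so that $K\equiv1$ and ${\rm e}^{\varphi_t}$ \emph{vanishes} on the inner boundary $\{f=\log t\}$. Applying Corollary \ref{cor:int_pimo} on $\{\log c<f<\log(t-\ep)\}$ and letting $\ep\to0$, the mean-curvature term at the inner boundary is killed by the factor $\ep/t$, while the $|\na f|_g^3$ terms survive with coefficient $K=1$; the first hypothesis in \eqref{eq:cond_curv_media_2} then makes these cancel as $t\to\infty$, and since $1-{\rm e}^f/t\nearrow1$ the Monotone Convergence Theorem handles the left-hand side. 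You would need to replace your fixed weight by such an exhaustion to make the argument work.
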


\begin{proof}
For $c<t<+\infty$, we consider the family of functions
\begin{equation*}
\varphi_t(f):=\log\big(1-{\rm e}^f\!/t\big),
\end{equation*} 
defined for $\log c<f<\log t$.
Since $\varphi_t$ satisfies the equation $\varphi''+(\varphi')^2-\varphi'=0$ with the first integral $K=(1-\varphi_t'){\rm e}^{\varphi_t}\equiv1$, we can apply Corollary \ref{cor:int_pimo} with $a=\log c$ and $b=\log(t-\ep)$, for some small $\ep>0$, obtaining
\begin{multline*}
2\!\!\!\!\!\!\int\limits_{\{\log c\,<f<\log(t-\ep)\}}\!\!\!\!\!\!\!\!\!|\na^2f|_g^2 \,\,\big(1-{\rm e}^f\!/t\big)\,{\rm d}\mu_g
\,\,=\!\!\!\!\int\limits_{\{f=\log(t-\ep)\}}\!\!\!\!\!\!\!\!\!|\na f|_g^3\,{\rm d}\sigma_g\,-\!\!\!   
\int\limits_{\{f=\log c\}}\!\!\!\!\!|\na f|^3_g\,{\rm d}\sigma_g\\
+(\ep/t)\!\!\!\!\!\!\!\!\int\limits_{\{f=\log(t-\ep)\}}\!\!\!\!\!\!\!\!\!2|\na f|_g^2\,\HHH_g\,{\rm d}\sigma_g\,-\,(1-c/t)\!\!\!\!\!\int\limits_{\{f=\log c\}}\!\!\!\!\!2|\na f|_g^2\,\HHH_g\,{\rm d}\sigma_g.
\end{multline*}   
Letting $\ep\to0$ in the previous formula, we end up with
\begin{multline}
\label{eq:per_limite_t}
2\!\!\!\!\!\!\int\limits_{\{\log c\,<f<\log t\}}\!\!\!\!\!\!\!\!\!|\na^2f|_g^2 \,\,\big(1-{\rm e}^f\!/t\big)\,{\rm d}\mu_g
\,\,=\!\!\!\int\limits_{\{f=\log t\}}\!\!\!|\na f|_g^3\,{\rm d}\sigma_g\,-\!\!\!   
\int\limits_{\{f=\log c\}}\!\!\!|\na f|^3_g\,{\rm d}\sigma_g\\
\,-\,(1-c/t)\!\!\!\!\!\int\limits_{\{f=\log c\}}\!\!\!\!\!2|\na f|_g^2\,\HHH_g\,{\rm d}\sigma_g.
\end{multline} 
Finally, by the first condition in assumption (\ref{eq:cond_curv_media_2}) and by the Monotone Convergence Theorem,
letting $t\to+\infty$ in (\ref{eq:per_limite_t}) gives
\begin{equation*}
\!\!\!\!\!\!\int\limits_{\{f>\log c\}}\!\!\!\!\!\!\!\!\!|\na^2f|_g^2\,\,{\rm d}\mu_g
\,\,
=
\,\,
-\!\!\!\!\!\int\limits_{\{f=\log c\}}\!\!\!\!\!|\na f|_g^2\,\HHH_g\,{\rm d}\sigma_g.
\end{equation*}
The statement is now a direct consequence of the second condition in assumption (\ref{eq:cond_curv_media_2}).
\end{proof}

We now come back to the analysis of identity (\ref{eq:Bochner_g}), this time in relation to pointwise-type overdetermining conditions
for problems (\ref{eq:pb_reform}) and (\ref{eq:pb_2_reform}). 
In analogy with Proposition \ref{prop:int_part_1} and Proposition \ref{prop:int_part_2}, our conclusion will be that $f$ is an affine function.
We start the analysis of problem~(\ref{eq:pb_reform}) with the following lemma.

\begin{lemma}
\label{lem:Garof_equiv}
Let $f$ be a smooth function satisfying problem~(\ref{eq:pb_reform}) in 
$M=\R^n\setminus\Om$, where $g$ is the metric defined in (\ref{eq:tilde_g_con_f}).
Then, for every $0<c\leq1$,
\begin{equation*}
\max_{\{f\leq\log c\}}|\na f|_g^2
=
\max_{\{f=\log c\}}|\na f|_g^2.
\end{equation*}
\end{lemma}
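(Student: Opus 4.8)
The plan is to establish the maximum principle statement by combining the Bochner identity (\ref{eq:Bochner_g}) with a careful control of $|\na f|_g^2$ near infinity, exploiting that $f\to-\infty$ there. First I would observe that, from (\ref{eq:Bochner_g}), the function $w:=|\na f|_g^2$ satisfies the elliptic inequality
\[
\Delta_g w+\big\langle\na w\,\big|\,\na f\big\rangle_g=2|\na^2f|_g^2\geq0
\qquad\text{in }M,
\]
so $w$ is a (weak) subsolution of the drift-Laplace operator $L:=\Delta_g+\langle\na f\,|\,\cdot\,\rangle_g$. Since $L$ satisfies the weak maximum principle on bounded domains, the value of $w$ on the compact region $\{f\leq\log c\}\cap\{f\geq\log\ep\}$ is controlled by its boundary values on $\{f=\log c\}$ and $\{f=\log\ep\}$. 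Letting $\ep\to0$, it therefore suffices to show that the contribution of the "inner" boundary $\{f=\log\ep\}$ (i.e.\ the region near infinity in the original picture) does not create a spurious maximum, namely that
\[
\limsup_{\ep\to0}\ \max_{\{f=\log\ep\}}|\na f|_g^2\ \leq\ \max_{\{f=\log c\}}|\na f|_g^2,
\]
or even just that $|\na f|_g^2$ stays bounded and one can conclude by a Phragmén--Lindelöf type argument.

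The key estimate is the asymptotic behavior of $|\na f|_g^2=|\DDD u|^2/u^{2(n-1)/(n-2)}$ as $|x|\to\infty$, which follows from the decay estimates on $u$ and $\DDD u$ recorded in Lemmata \ref{lem:u_behaves} and \ref{lem:u_deriv_behaves}: these give $u\sim |x|^{2-n}$ and $|\DDD u|\sim |x|^{1-n}$, so that $|\na f|_g^2$ is bounded near infinity (in fact it converges, to the value dictated by the model solution $u_0$). Thus $w$ is a bounded subsolution of $L$ on $M\setminus\{f\leq\log c\}^c$, and on $\{f=\log\ep\}$ one has $w\leq A_1^2$ uniformly in $\ep$ by (\ref{eq:na_f_bounded}). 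Applying the weak maximum principle for $L$ on each annular region $\{\log\ep<f<\log c\}$ and letting $\ep\to0$, I would conclude $\max_{\{f\le\log c\}}w\le\max\{\max_{\{f=\log c\}}w,\ \limsup_{\ep\to0}\max_{\{f=\log\ep\}}w\}$; the point is then to rule out that the latter term is the larger one.

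To close this gap I would use a barrier/scaling argument: since $|\na f|_g^2$ actually converges to the constant $\lambda_0:=\big((n-2)|\Sph^{n-1}|\big)^{-2/(n-2)}$ (the value on the model half-cylinder) as $f\to-\infty$ — again a consequence of the sharp asymptotics in the Appendix — and since on $\{f=\log c\}$ the maximum of $|\na f|_g^2$ is at least $\lambda_0$ (this can be seen either from the divergence theorem giving $\mathrm{Cap}(\Om)=\int_{\{u=c\}}|\DDD u|\,\rmd\sigma$ together with an isoperimetric-type inequality, or more simply from the fact that if $\max_{\{f=\log c\}}w<\lambda_0$ one contradicts the subsolution property applied between $\{f=\log c\}$ and infinity), the limsup term cannot dominate. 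The main obstacle is precisely this last point: making rigorous that the interior boundary term near infinity is harmless. I expect the cleanest route is to not identify the limiting constant at all, but instead to apply Corollary \ref{cor:int_pimo} with $\varphi(f)=f$ exactly as in the proof of Proposition \ref{prop:int_part_1}: the boundary term at $\{f=\log\ep\}$ is $\ep\int_{\{f=\log\ep\}}(\,\cdots)$ which was already shown there to vanish as $\ep\to0$ via (\ref{eq:na_f_bounded}), (\ref{eq:D^2_bdd}), (\ref{eq:vol_bdd}); combined with the weak maximum principle for $L$ this yields the claim without ever computing $\lambda_0$. In short, the proof reduces to the subsolution property plus the vanishing-at-infinity estimate (\ref{eq:lim_in_prop}) already available in the paper.
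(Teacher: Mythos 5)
Your setup is right: by (\ref{eq:Bochner_g}) the function $w:=|\na f|_g^2$ is a subsolution of the drift operator $L=\Delta_g+\langle\na f\,|\,\cdot\,\rangle_g$, and the entire difficulty is the ``boundary at infinity''. But neither of the routes you sketch closes that difficulty, and this is a genuine gap. Since $w$ tends to a \emph{positive} constant at infinity (not to zero), the weak maximum principle on the annuli $\{\log\ep<f<\log c\}$ only yields $\max_{\{f\le\log c\}}w\le\max\{\max_{\{f=\log c\}}w,\ \limsup_{x\to\infty}w\}$, and showing that the second entry does not dominate is essentially the lemma itself. Your claim that $\max_{\{f=\log c\}}w<\lambda_0$ ``contradicts the subsolution property'' is not correct: a bounded subsolution on an exterior domain can perfectly well be strictly smaller on the compact boundary than its limit at infinity (take $\lambda_0(1-|x|^{2-n})$ on $\R^n\setminus B_1$, which is harmonic, vanishes on $\pa B_1$ and tends to $\lambda_0$); its supremum is then approached only at infinity and no contradiction arises. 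The capacity/isoperimetric alternative is not carried out and is itself a nontrivial comparison result. Your ``cleanest route'' is also off target: Corollary \ref{cor:int_pimo} with $\varphi(f)=f$ and the vanishing of $\ep\int_{\{f=\log\ep\}}(\cdots)$ control an \emph{integral} boundary contribution in the identity of Proposition \ref{prop:int_part_1}; they give no control on $\max_{\{f=\log\ep\}}w$, which is the pointwise quantity the maximum principle needs, and in any case that proposition uses the sign hypothesis (\ref{eq:cond_curv_media_1}), which is not available here. (Also, the convergence of $|\na f|_g^2$ at infinity is not contained in Lemmata \ref{lem:u_behaves} and \ref{lem:u_deriv_behaves}, which only give two-sided bounds on $u$ and upper bounds on its derivatives.)

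The missing idea is a weight that kills the value at infinity. The paper sets $w_{\alpha}:=|\na f|_g^2\,{\rm e}^{\alpha f}=|\na f|_g^2\,u^{\alpha}$ for $\alpha\in(0,1)$ and derives from (\ref{eq:Bochner_g}) the inequality $\Delta_g w_{\alpha}+(1-2\alpha)\langle\na w_{\alpha}|\na f\rangle_g-\alpha(1-\alpha)|\na f|_g^2\,w_{\alpha}=2\,{\rm e}^{\alpha f}|\na^2f|_g^2\ge0$, whose zero-order coefficient is nonpositive. Now $w_{\alpha}\to0$ at infinity because $u^{\alpha}\to0$ while $|\na f|_g$ stays bounded, so the strong maximum principle forces the maximum of $w_{\alpha}$ to be attained at some $p_{\alpha}\in\pa M$ (constancy being excluded since it would force $|\na f|_g\equiv0$ on $\pa M$). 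Letting $\alpha\to0^+$ along a subsequence with $p_{\alpha}\to p_0\in\pa M$ gives $|\na f|_g^2(p_0)\ge|\na f|_g^2(x)$ for every fixed $x$, which is the claim. Without an input of this kind your argument remains circular at the inner boundary.
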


\begin{proof}
Let us restrict to the case $c\,{=}1$, so that $\{f\leq\log c\}=M$ and $\{f=\log c\}{=}~\pa M$. The proof for an arbitrary $0<c<1$ is identical, up to minor changes. 
We define the functions
\begin{equation*}
w_{\alpha}:=|\na f|_g^2\,{\rm e}^{\alpha f},\qquad\alpha\in(0,1).
\end{equation*}
Using equation (\ref{eq:Bochner_g}), one can show that the functions $w_{\alpha}$ satisfy
\begin{equation}
\label{eq:eq_alpha}
\Delta_gw_{\alpha}+(1-2\alpha)\langle\na w_{\alpha}|\na f\rangle_g
-\alpha(1-\alpha)|\na f|_g^2\,w_{\alpha}
=
2\,{\rm e}^{\alpha f}|\na^2f|^2_g.
\end{equation}
Observe that due to Lemmata \ref{lem:u_behaves} and
\ref{lem:u_deriv_behaves} and equation (\ref{eq:P_function}), we have that $w_{\alpha}(x)\to0$, as $|x|\to\infty$.
This fact, together with equation (\ref{eq:eq_alpha}), tells us that the function $w_{\alpha}$ attains its maximum value on $\pa M$. Indeed, the equation prevents $w_{\alpha}$ from attaining the maximum in the interior of $M$, unless $w_{\alpha}$ is a constant function. But since $w_{\alpha}$ vanishes at infinity, if $w_{\alpha}$ were constant we would have $0=w_{\alpha}=|\na f|_g$ on $\pa M$, which is impossible.
Now, let $p_{\alpha}\in\pa M$ be such that $\max_{\overline M}w_{\alpha}=w_{\alpha}(p_{\alpha})$ so that, fixed an arbitrary $x\in M$, we have
\begin{equation*}
w_{\alpha}(p_{\alpha})\geq w_{\alpha}(x).
\end{equation*}
Therefore, up to a subsequence, we have $p_{\alpha}\to p_0$, as $\alpha\to0^+$, for some $p_0\in\pa M$, and in turn $|\na f|_g(p_0)\geq|\na f|_g(x)$. Since this holds true for every $x\in M$, the thesis follows.
\end{proof}


Combining the above lemma with the Strong Maximum Principle we can deduce the next result. 

\begin{proposition}
\label{prop:const_hyp}
Let $f$ be a smooth function satisfying problem~(\ref{eq:pb_reform}) in $M=\R^n\setminus\Om$, where $g$ is the metric defined in (\ref{eq:tilde_g_con_f}). If the conditions
\begin{equation}
\label{eq:const_hyp}
|\na f|_g=d\ \ \mbox{ on $\ \pa M$}\qquad\mbox{ and }\qquad\inf_{\pa M}\HHH_g\leq0,
\end{equation}
for some $d>0$, then $\na^2f\equiv0$ in $M$.
Moreover, for every $0<c<1$, the same conclusion holds in $\{f\leq\log c\}$ if condition (\ref{eq:const_hyp}) is satisfied with $\{f=\log c\}$ in place of $\pa M$.
\end{proposition}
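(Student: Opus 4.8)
The plan is to combine the Bochner-type identity \eqref{eq:Bochner_g} with the boundary constancy of $|\na f|_g$ to run a maximum principle argument on the auxiliary functions $w_\alpha$, and then let $\alpha \to 0^+$. First I would recall from the proof of Lemma \ref{lem:Garof_equiv} that the functions $w_\alpha := |\na f|_g^2\,{\rm e}^{\alpha f}$ satisfy \eqref{eq:eq_alpha}, and that $w_\alpha$ attains its maximum over $\overline{M}$ at some point $p_\alpha \in \pa M$ (using that $w_\alpha \to 0$ at infinity). The first hypothesis in \eqref{eq:const_hyp} says $|\na f|_g \equiv d$ on $\pa M$, hence on $\pa M$ we have $w_\alpha = d^2\,{\rm e}^{\alpha f}$; since $f \equiv 0$ on $\pa M$ (from \eqref{eq:pb_reform}), in fact $w_\alpha \equiv d^2$ on all of $\pa M$. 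So the boundary value of $w_\alpha$ is a constant, and this constant equals $\max_{\overline M} w_\alpha$.

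The key step is then the Hopf boundary point lemma applied to \eqref{eq:eq_alpha}. The operator $L_\alpha v := \Delta_g v + (1-2\alpha)\langle \na v\,|\,\na f\rangle_g - \alpha(1-\alpha)|\na f|_g^2\,v$ is uniformly elliptic with locally bounded coefficients near $\pa M$ (note $|\na f|_g$ is bounded, by \eqref{eq:na_f_bounded}), and the zeroth order coefficient $-\alpha(1-\alpha)|\na f|_g^2$ is nonpositive for $\alpha \in (0,1)$; moreover $L_\alpha w_\alpha = 2\,{\rm e}^{\alpha f}|\na^2 f|^2_g \ge 0$, so $w_\alpha$ is a subsolution. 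Since $w_\alpha$ attains its max over $\overline M$ at the boundary point $p_\alpha$ and is not identically that constant unless $\na^2 f \equiv 0$ (which would already finish the proof), the Hopf lemma forces $\pa_{\nu_g} w_\alpha(p_\alpha) > 0$, where $\nu_g$ is the outward $g$-unit normal. On the other hand, on $\pa M$ we may compute the normal derivative of $w_\alpha$ directly: since $w_\alpha \equiv d^2$ along $\pa M$, the tangential derivatives vanish, and a short computation using $\na^2 f(\nu_g,\cdot)$ and the definition of $\HHH_g$ in \eqref{eq:formula_curvature} gives
\[
\pa_{\nu_g} w_\alpha = -2 |\na f|_g^3\,\HHH_g + \alpha\,|\na f|_g^2\,\langle \na f\,|\,\nu_g\rangle_g\,{\rm e}^{\alpha f} = -2 d^3\,\HHH_g - \alpha\, d^3
\]
on $\pa M$, using $\nu_g = -\na f/|\na f|_g$ and $f \equiv 0$ there. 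Hence $\pa_{\nu_g} w_\alpha(p_\alpha) > 0$ implies $-2 d^3\,\HHH_g(p_\alpha) - \alpha d^3 > 0$, i.e. $\HHH_g(p_\alpha) < -\alpha/2 < 0$.

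Finally I would let $\alpha \to 0^+$. Up to a subsequence $p_\alpha \to p_0 \in \pa M$, and by continuity $\HHH_g(p_0) \le 0$; combined with the strictness above one actually needs a little care, so instead I would argue by contradiction from the outset: if $\na^2 f \not\equiv 0$, the Hopf lemma applies at \emph{every} maximum point, and in particular for each $\alpha$ we get a point $p_\alpha \in \pa M$ with $\HHH_g(p_\alpha) < 0$, whence $\inf_{\pa M}\HHH_g < 0$ with a quantitative bound that survives the limit only if we already know $\HHH_g$ is bounded below — which it is, by compactness of $\pa M$ and smoothness. This contradicts $\inf_{\pa M}\HHH_g \le 0$ only in the borderline equality case, so the cleanest route is: assume $\na^2 f \not\equiv 0$; then $w_\alpha$ is a non-constant subsolution attaining its interior-or-boundary max on $\pa M$, so by the strong maximum principle the max is attained \emph{only} on $\pa M$, and Hopf gives $\pa_{\nu_g}w_\alpha > 0$ at the max point, forcing $\HHH_g < 0$ strictly somewhere on $\pa M$ for each $\alpha$; passing $\alpha\to 0$ and using Lemma \ref{lem:Garof_equiv} to locate the limiting maximum point, we contradict $\inf_{\pa M}\HHH_g \le 0$ being compatible. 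Therefore $\na^2 f \equiv 0$ in $M$. The relocalized statement for $\{f \le \log c\}$ is handled identically, replacing $\pa M$ by $\{f = \log c\}$ throughout and invoking the general form of Lemma \ref{lem:Garof_equiv}.

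The main obstacle I anticipate is making the limit $\alpha \to 0^+$ rigorous: the Hopf lemma yields strict inequality $\HHH_g(p_\alpha) < 0$ for each $\alpha > 0$, but the hypothesis only gives $\inf \HHH_g \le 0$, so one must argue that the limiting maximum point $p_0$ (which by Lemma \ref{lem:Garof_equiv} realizes $\max_{\pa M}|\na f|_g$) either already witnesses $\na^2 f = 0$ or produces a genuine contradiction; the delicate point is ruling out the case where $p_\alpha$ drifts and $\HHH_g(p_\alpha) \to 0$. This is resolved by noting that $-2d^3\HHH_g(p_\alpha) - \alpha d^3 > 0$ actually came from $L_\alpha w_\alpha \ge 0$ with equality iff $\na^2 f \equiv 0$, so any non-vanishing of $\na^2 f$ propagates a fixed positive gap via the strong form of Hopf, and the limit then genuinely contradicts $\inf_{\pa M}\HHH_g \le 0$.
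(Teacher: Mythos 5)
Your strategy (Bochner identity plus a Hopf boundary--point argument at the maximum of $|\na f|_g^2$) is the right one, and it is essentially the paper's, but two intertwined errors leave a genuine gap. First, the orientation: since $f=0$ on $\pa M$ and $f<0$ in the interior of $M$, the vector $\na f/|\na f|_g$ is the \emph{outward} unit normal of $M$ along $\pa M$ (it points into $\Om$), whereas $\nu_g=-\na f/|\na f|_g$ points into $M$. Hopf's lemma at a boundary maximum of the subsolution $w_\alpha$ therefore yields $\langle\na w_\alpha\,|\,\na f\rangle_g>0$, and on $\pa M$ one computes, using $\langle\na|\na f|_g^2\,|\,\na f\rangle_g=2|\na f|_g^3\,\HHH_g$,
\[
\langle\na w_\alpha\,|\,\na f\rangle_g
={\rm e}^{\alpha f}\big(2|\na f|_g^3\,\HHH_g+\alpha|\na f|_g^4\big)
=2d^3\,\HHH_g+\alpha\,d^4,
\]
so the correct conclusion is $\HHH_g>-\alpha d/2$ at every boundary point, not $\HHH_g(p_\alpha)<-\alpha/2$. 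Second, even granting your sign, the logic is inverted: a point where $\HHH_g<0$ is perfectly \emph{compatible} with the hypothesis $\inf_{\pa M}\HHH_g\leq0$. To contradict that hypothesis one needs $\HHH_g>0$ strictly at \emph{every} point of $\pa M$, so that compactness upgrades this to $\inf_{\pa M}\HHH_g>0$. With the correct sign, your limit $\alpha\to0^+$ gives only $\HHH_g\geq0$ on $\pa M$, which is consistent with $\inf_{\pa M}\HHH_g=0$ and yields no contradiction; this is exactly the loss of strictness you flag at the end, and the claim that ``the strong form of Hopf propagates a fixed positive gap'' is not an argument.

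The paper avoids the $\alpha>0$ detour at this stage. Lemma \ref{lem:Garof_equiv} (whose proof is where the $w_\alpha$ and the limit $\alpha\to0^+$ are genuinely needed, because $|\na f|_g^2$ itself does \emph{not} decay at infinity) already locates $\max_{\overline M}|\na f|_g^2$ on $\pa M$, where it equals $d^2$ by the first condition in (\ref{eq:const_hyp}); in particular $\langle\na|\na f|_g^2\,|\,\na f\rangle_g\geq0$, i.e.\ $\HHH_g\geq0$, on $\pa M$. One then applies the Hopf lemma directly to $w_0=|\na f|_g^2$, whose equation $\Delta_g w_0+\langle\na w_0\,|\,\na f\rangle_g=2|\na^2f|_g^2\geq0$ carries no zeroth-order term and no $\alpha$-correction: either $|\na f|_g$ is constant in $M$, in which case (\ref{eq:Bochner_g}) gives $\na^2f\equiv0$, or $2d^3\,\HHH_g>0$ strictly everywhere on $\pa M$, whence $\inf_{\pa M}\HHH_g>0$ by compactness, contradicting the second condition in (\ref{eq:const_hyp}). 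You should restructure your argument along these lines; as written, the passage $\alpha\to0^+$ cannot be made to close the proof.
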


\begin{proof}
We first note that $\HHH_g\geq0$ on $\pa M$. This follows from (\ref{eq:formula_curvature}) together with the fact that 
$
2\na^2f(\na f,\na f)=\langle\na|\na f|_g^2|\na f\rangle_g\geq0,
$
which is due to Lemma \ref{lem:Garof_equiv} coupled with the first condition in (\ref{eq:const_hyp}). 
Now, using \cite[Lemma 3.4]{Gil_Tru_book} and again the first condition in (\ref{eq:const_hyp}), we have that either $\langle\na|\na f|_g^2|\na f\rangle_g>0$ on $\pa M$ or $|\na f|_g$ is constant in $M$. Observe that the second condition of (\ref{eq:const_hyp}) and (\ref{eq:formula_curvature}) prevent the first possibility to occur, because of the compactness of $\pa M$. Therefore, we have deduced that $|\na f|_g$ is constant in $M$ and in turn, from (\ref{eq:Bochner_g}), that $\na^2f=0$ in $M$. 
The second part of the statement follows by the same argument, 
up to minor changes.  
\end{proof}

Note that, in virtue of Lemma \ref{lem:Garof_equiv}, the constant $d$ appearing in the previous statement must coincide with $\max_M|\na f|_g$. The following result can be seen as the counterpart of Lemma \ref{lem:Garof_equiv} and Proposition \ref{prop:const_hyp} for the interior punctured domain.

\begin{proposition}
\label{prop:const_hyp_2}
Let $f$ be a smooth function satisfying problem~(\ref{eq:pb_2_reform}) in $M=\overline\Om\setminus\{0\}$, where $g$ is the metric defined in (\ref{eq:tilde_g_con_f}). If the conditions
\begin{equation}
\label{eq:const_hyp_2}
|\na f|_g
\,\equiv\,\,
\max_M|\na f|_g\ \ \mbox{ on $\ \pa M$}\qquad\mbox{ and }\qquad\sup_{\pa M}\HHH_g\geq0
\end{equation}
are fulfilled, then $\na^2f=0$ in $M$.
\end{proposition}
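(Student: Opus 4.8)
The plan is to adapt, with the appropriate sign changes, the proof of Proposition~\ref{prop:const_hyp} concerning the exterior domain. The new feature here --- the puncture at $0$, where $f\to+\infty$ --- is precisely why no substitute for Lemma~\ref{lem:Garof_equiv} is needed: the first condition in (\ref{eq:const_hyp_2}) already places the maximum of $|\na f|_g$ on $\pa M$ by hypothesis. Set $d:=\max_M|\na f|_g$, so that $|\na f|_g^2\leq d^2$ in $M$ with equality on $\pa M$, and note that $d>0$ (otherwise $f$ would be constant, contradicting $f(x)\to+\infty$ as $x\to0$). From the Bochner identity (\ref{eq:Bochner_g}) one reads that $|\na f|_g^2$ is a subsolution of the elliptic operator $Lw:=\Delta_g w+\langle\na w\,|\,\na f\rangle_g$, which has smooth coefficients and no zeroth-order term: indeed $L\,|\na f|_g^2=2|\na^2 f|_g^2\geq0$.

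The first step is to show that $\HHH_g\leq0$ on $\pa M$. For the interior punctured domain $\nu_g=-\na f/|\na f|_g$ is the genuine outward unit normal of $\Om$ along $\pa M$, so at any $p\in\pa M$ the derivative of $|\na f|_g^2$ in the direction $-\nu_g$, which points into $M$, is nonpositive; that is, $\langle\na|\na f|_g^2\,|\,\nu_g\rangle_g\geq0$ at $p$. On the other hand, (\ref{eq:formula_curvature}) together with the $g$-harmonicity of $f$ gives $\langle\na|\na f|_g^2\,|\,\na f\rangle_g=2|\na f|_g^3\,\HHH_g$, hence $\langle\na|\na f|_g^2\,|\,\nu_g\rangle_g=-2|\na f|_g^2\,\HHH_g=-2d^2\HHH_g$ on $\pa M$; comparison yields $\HHH_g\leq0$ there.

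The second step runs the Hopf-lemma argument. Fix $p\in\pa M$ and pick a small ball $B\subset\Om\setminus\{0\}$ internally tangent to $\pa\Om$ at $p$ --- possible because $\pa\Om$ is smooth and $0$ is interior to $\Om$, so the puncture plays no role near $\pa M$. Applying \cite[Lemma 3.4]{Gil_Tru_book} to the $L$-subsolution $|\na f|_g^2\leq d^2$, which attains the value $d^2$ at $p\in\pa B$, we obtain the alternative: either $\langle\na|\na f|_g^2\,|\,\nu_g\rangle_g(p)>0$, i.e.\ $\HHH_g(p)<0$, or $|\na f|_g^2\equiv d^2$ in $B$ and hence, by the strong maximum principle, throughout $M$. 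Now, since $\pa M$ is compact and $\HHH_g$ is continuous along $\pa M$ (as $|\na f|_g=d>0$ there), $\sup_{\pa M}\HHH_g$ is attained at some $p_\ast\in\pa M$; the second condition in (\ref{eq:const_hyp_2}) gives $\HHH_g(p_\ast)\geq0$, and combined with the first step this forces $\HHH_g(p_\ast)=0$, excluding the first alternative at $p_\ast$. Therefore $|\na f|_g\equiv d$ on $M$, and substituting into (\ref{eq:Bochner_g}) gives $|\na^2 f|_g^2\equiv0$, that is $\na^2 f\equiv0$ in $M$.

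I do not expect a genuine obstacle: the proof is the sign-reversed twin of that of Proposition~\ref{prop:const_hyp}. The two points that need care are the orientation bookkeeping --- for the interior problem $\nu_g$ points out of $\Om$, which is why $\HHH_g\leq0$ (and not $\HHH_g\geq0$) appears on $\pa M$ --- and the observation that the interior ball in the Hopf lemma can always be chosen to avoid the puncture, which is immediate since $0$ is an interior point of $\Om$. As already noted, under the present hypotheses the step corresponding to Lemma~\ref{lem:Garof_equiv} is unnecessary.
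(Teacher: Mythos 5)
Your proof is correct and follows essentially the same route as the paper's: both first deduce $\HHH_g\leq0$ on $\pa M$ from the fact that the hypothesis places the maximum of the Bochner subsolution $|\na f|_g^2$ on the boundary, then invoke the Hopf lemma \cite[Lemma 3.4]{Gil_Tru_book} together with the compactness of $\pa M$ and the condition $\sup_{\pa M}\HHH_g\geq0$ to exclude the strict alternative and conclude that $|\na f|_g$ is constant, whence $\na^2f\equiv0$ by (\ref{eq:Bochner_g}). Your extra remarks on the orientation of $\nu_g$ and on choosing the interior tangent ball away from the puncture are accurate elaborations of details the paper leaves implicit.
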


\begin{proof}
Note that from the first condition in (\ref{eq:const_hyp_2}) we obtain that
$
2\na^2f(\na f,\na f)=\langle\na|\na f|_g^2|\na f\rangle_g\leq0,
$
so that $\HHH_g\leq0$ on $\pa M$. 
Now, using \cite[Lemma 3.4]{Gil_Tru_book} and again the first condition in (\ref{eq:const_hyp_2}), we have that either $\langle\na|\na f|_g^2|\na f\rangle_g<0$ on $\pa M$ or $|\na f|_g$ is constant in $M$. Observe that the second condition of (\ref{eq:const_hyp_2}) and (\ref{eq:formula_curvature}) prevent the first possibility to occur, because of the compactness of $\pa M$. Therefore, we have deduced that $|\na f|_g$ is constant in $M$ and in turn, from (\ref{eq:Bochner_g}), that $\na^2f=0$ in $M$.   
\end{proof}
 

\section{Product structure of the conformal metric \\
and rotational symmetry}
\label{sec:dim_thm}

This section is devoted to the proofs of Theorem 
\ref{thm:Lore_Virgi}, Theorem~\ref{thm:nabla_const}, 
Theorem~\ref{thm:Lore_Virgi_delta}, Theorem~\ref{thm:nabla_const_delta}, and Theorem \ref{thm:2bordi}.
Moreover, at the end of the section we present the proof of 
Corollary \ref{cor:cond_glob}, which is a consequence of 
Theorem \ref{thm:Lore_Virgi} combined with the coarea formula.
 
We first show that if an affine function $f$ solves equation (\ref{eq:tilde_Ric_f}) in a complete noncompact Riemannian manifold 
$(M,g)$ with smooth boundary, then $(M,g)$ has a product structure provided $f$ is constant on $\pa M$. Moreover, if 
the Weyl tensor associated with the metric is null, 
we have that $(M,g)$ is indeed isometric to one half round cylinder. 
Building on this result, we then consider the function $f:=\log u$ satisfying problems (\ref{eq:pb_reform}) and (\ref{eq:pb_2_reform}) and for each of the overdetermining conditions considered in the theorems we prove that the hypersurfaces $\{f=\log c\}$, which coincide with $\{u=c\}$, are isometric to constant curvature spheres. 
The rotational symmetry of $u$ will then follow by standard arguments.

Since the subsequent proposition and its proof are based on the orthogonal decomposition of the Riemann tensor as well as on the \emph{Gauss equations}, we briefly recall these classical formulas for ease of reference.

Given a $n$-dimensional Riemannian manifold,
the Riemann curvature tensor $\Rm$ decomposes as 
\begin{equation}
\label{eq:Rm_decomp}
\RRR_{\alpha\beta\gamma\delta}=\SSS_{\alpha\beta\gamma\delta}+\EEE_{\alpha\beta\gamma\delta}+\WWW_{\alpha\beta\gamma\delta},
\end{equation}
where the pure-trace or scalar part $\SSS$ of $\Rm$ is defined by
\[
\SSS_{\alpha\beta\gamma\delta}:=\frac{\RRR}{n(n-1)}\bigg(g_{\alpha\gamma}g_{\beta\delta}-g_{\alpha\delta}g_{\beta\gamma}\bigg),
\]
the semi-tracelss part $\EEE$ by
\begin{equation}
\label{eq:R_decomp_E}
\EEE_{\alpha\beta\gamma\delta}:=\frac 1{n-2}\bigg(g_{\alpha\gamma}\TTT_{\beta\delta}-g_{\alpha\delta}\TTT_{\beta\gamma}+g_{\beta\delta}\TTT_{\alpha\gamma}-g_{\beta\gamma}\TTT_{\alpha\delta}\bigg),
\end{equation}
where
\begin{equation}
\label{eq:R_decomp_T}
\TTT_{\alpha\beta}:=\RRR_{\alpha\beta}-\frac{\RRR}ng_{\alpha\beta}
\end{equation}
is the traceless Ricci tensor $\stackrel{\circ}{\Ric}$,
and the tensor $\WWW$, the fully traceless part of $\Rm$, is the so-called Weyl tensor.

We recall that an $n$-dimensional Riemannian manifold is a \emph{space form} (or a \emph{constant curvature space})
if $\WWW=0$ and $\EEE=0$.
In this case, by the
Fundamental Classification Theorem (see \cite[Theorem 3.82]{Ga_Fo_book}), 
depending on whether $\RRR>0$, $\RRR=0$, or $\RRR<0$, the manifold is locally isometric either to a round sphere, or to the flat $\R^n$, or to the hyperbolic space, respectively.
In the case where $\EEE=0$, the manifold is said to be an \emph{Einstein manifold}. Notice that $\EEE=0$ if and only if $\stackrel{\circ}{\Ric}=0$. Concerning the  Weyl tensor, it is worth pointing out that for $n\leq3$ the tensor $\WWW$ is not present in the decomposition (\ref{eq:Rm_decomp}), whereas for $n>3$ we have that $\WWW=0$ if and only if the manifold is \emph{locally conformally flat}.

We finally recall the Gauss equations for hypersurfaces.
Let $N$ be a hypersurface sitting in our $n$-dimensional Riemannian manifold, and fix local coordinates $\{t,\theta^1,...,\theta^{n-1}\}$ adapted to the hypersurface in the sense that    $\pa/\pa t$ is a unit normal to $N$ and $\pa/\pa\theta^1,...,\pa/\pa\theta^{n-1}$ are tangent to $N$.    In this coordinates, the Gauss equations read
\begin{align}
\RRR_{ijkl}&=\overline{\RRR}_{ijkl}+h_{jk}h_{il}-h_{ik}h_{jl},\label{eq:Gauss_Riemann}\\
\RRR_{ij}&=\overline{\RRR}_{ij}+\RRR_{itjt}+g^{kl}h_{ik}h_{jl}-\HHH\,h_{ij},\label{eq:Gauss_Ricci}\\
\RRR&=\overline\RRR+2\RRR_{tt}+|h|^2-\HHH^2.\label{eq:Gauss_R}
\end{align}
Here, $\bar g$ is the metric induced by $g$ on the hypersurface, and $\overline\Rm$, $\overline\Ric$, and $\overline\RRR$ are the corresponding  Riemann tensor, Ricci tensor, and scalar curvature.

Having recalled these basic concepts from Riemannian geometry, we are now in the position to prove the following structure theorem.

\begin{theorem}
\label{prop:LCF_sfera}
Let $(M,g)$ be a complete noncompact Riemannian manifold with smooth boundary. Suppose that there exists a nonconstant function $f:M\to\R$ such that the boundary $\pa M$ of $M$ is a level set of $f$. 
We have the following.
\begin{itemize}

\item[(i)] If $\na^2f\equiv0$ in $M$, then 
$(M,g)$ is isometric to the Riemannian product 
\[
\Big([0,+\infty)\times\pa M,dt\otimes dt+g_{|\pa M}\Big),
\]
where the variable $t$ represents the distance to the boundary.

\item[(ii)] If in addition the Weyl tensor $\WWW$ of the metric $g$ vanishes and equation (\ref{eq:tilde_Ric_f}) is satisfied, 
then the $(n-1)$-dimensional Riemannian manifold 
$\big(\pa M,g_{|\pa M}\big)$ is 
locally isometric to a spherical space form and $(M,g)$ is 
locally isometric to one half round cylinder.
If in addition $\pa M$ is connected, the local isometry
improves to a global isometry.
\end{itemize}
\end{theorem}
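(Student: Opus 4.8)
The plan is to treat the two statements separately, deriving (i) from a standard de Rham-type splitting and then bootstrapping to (ii) via the vanishing of the Weyl tensor together with the quasi-Einstein equation (\ref{eq:tilde_Ric_f}).

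For part (i), first I would observe that since $\na^2 f\equiv 0$, the gradient $\na f$ is a parallel vector field on $M$; in particular $|\na f|_g$ is a nonzero constant (nonzero because $f$ is nonconstant and parallel, so it cannot vanish anywhere once it vanishes nowhere — more precisely $|\na f|_g^2$ is constant and if it were $0$ then $f$ would be constant). Normalizing, set $t$ to be the $g$-distance to $\pa M$; since $\pa M=\{f=\text{const}\}$ and $\na f$ is parallel and orthogonal to $\pa M$, the function $f$ is an affine function of $t$, and the integral curves of $\na f/|\na f|_g$ are unit-speed geodesics hitting $\pa M$ orthogonally. A parallel unit vector field whose orthogonal distribution is therefore also parallel yields, by the local de Rham decomposition, that $g$ splits locally as $dt\otimes dt + g_{|\pa M}$; completeness and the fact that $M$ is noncompact with boundary the single level set $\{t=0\}$ promote this to the global isometry with $\big([0,+\infty)\times\pa M,\,dt\otimes dt+g_{|\pa M}\big)$. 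The normal exponential map from $\pa M$ is then a diffeomorphism onto $M$, which is the required conclusion.

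For part (ii), I would exploit the product structure from (i) in coordinates $\{t,\theta^1,\dots,\theta^{n-1}\}$ adapted to the slices $\{t=\text{const}\}$. Because $\na f$ is parallel, the second fundamental form of each slice vanishes, $h_{ij}=0$, and also $\RRR_{\alpha\beta}(\na f,\cdot)=0$ from (\ref{eq:tilde_Ric_f}) (the $\na^2 f$ and $df\otimes df$ terms handle the rest after pairing with $\na f$); hence $\RRR_{tt}=0$ and $\RRR_{it}=0$. Tracing (\ref{eq:tilde_Ric_f}) gave $\RRR_g/(n-1)=|\na f|_g^2/(n-2)$, a positive constant; combined with $\RRR_{tt}=0$ and the Gauss equation (\ref{eq:Gauss_R}) (with $h=0$), the intrinsic scalar curvature $\overline\RRR$ of each slice equals $\RRR_g$, a positive constant. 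Next I would insert $\WWW=0$ into the decomposition (\ref{eq:Rm_decomp}): writing out $\RRR_{itjt}$ using $\SSS$ and $\EEE$, the condition $\RRR_{it\,jt}=0$ together with $\RRR_{tt}=0$ forces, via (\ref{eq:R_decomp_E}) and (\ref{eq:R_decomp_T}), the traceless Ricci tensor $\TTT$ to vanish on the tangential block, and then the full $\TTT\equiv 0$, i.e. $(M,g)$ is Einstein; being additionally conformally flat ($\WWW=0$) and Einstein, $(M,g)$ is a space form, necessarily flat since $\RRR_{tt}=0$ forces a degenerate sign unless one argues directly on the slices. The cleaner route, which I would actually follow, is to restrict the decomposition to the slice via the Gauss equation (\ref{eq:Gauss_Riemann}) with $h=0$: this identifies $\overline\Rm$ with the tangential components of $\Rm$, and since $\WWW=0$ and $\RRR_{tt}=0=\RRR_{it}$, a short computation shows $\overline\Rm$ has constant sectional curvature $\RRR_g/[(n-1)(n-2)]>0$; hence $(\pa M,g_{|\pa M})$ is a spherical space form, and $(M,g)$ is locally isometric to the half cylinder $[0,+\infty)\times \Sph^{n-1}_\rho$. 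If $\pa M$ is connected, a connected $(n-1)$-manifold of constant positive curvature with the product structure along $[0,+\infty)$ admits a developing map to the model half-cylinder which, by simple connectedness of the universal cover and completeness, descends to a genuine isometry since the only quotients consistent with the product being globally a Riemannian product over a single boundary slice are trivial — giving the global isometry.

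The main obstacle I expect is the careful extraction of ``$\TTT\equiv 0$ on the slice,'' equivalently constant curvature of $g_{|\pa M}$, purely from $\WWW=0$, $h_{ij}=0$, $\RRR_{tt}=0$, and $\RRR_{it}=0$: one must be attentive to how the $n$-dimensional trace-free parts in (\ref{eq:R_decomp_E})–(\ref{eq:R_decomp_T}) interact with the $(n-1)$-dimensional Gauss equation, and in particular to the fact that the intrinsic Weyl tensor of an $(n-1)$-manifold sitting inside a conformally flat $n$-manifold with totally geodesic embedding need not vanish a priori — it is the combination with the quasi-Einstein equation (which pins down $\TTT$ in the normal directions and the scalar curvature) that closes the argument. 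This is precisely the mechanism used in \cite{Ca_Ma_Ma_Ri}, and I would invoke that circle of ideas to keep the computation short.
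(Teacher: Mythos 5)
Your proposal is correct and follows essentially the same route as the paper: part (i) rests on the parallelism of $\na f$ forcing $|\na f|_g$ to be a nonzero constant and the metric to split as $dt\otimes dt+g_{|\pa M}$ (the paper does this by an explicit Christoffel-symbol computation in coordinates $\{f,\theta^1,\dots,\theta^{n-1}\}$ rather than by invoking de Rham, but the idea is identical), and part (ii) is exactly the ``cleaner route'' you settle on — Gauss equations with $h=0$, the quasi-Einstein equation pinning down $\Ric_g$ and hence $\TTT$ and $\EEE$, and $\WWW=0$ then determining the full Riemann tensor so that the slice has constant positive curvature. You were also right to discard the intermediate idea that $\TTT\equiv0$: here $\RRR_{tt}=0\neq\RRR_g/n$, so $(M,g)$ is not Einstein, and the argument must be run on the tangential block as you and the paper both do.
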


\begin{proof}
Since in this proof there is no possibility of confusion,
we omit the subscript $g$ to all the quantities.
To prove the first part of the statement, note first that by Kato inequality we have
\[
0=|\na^2 f|\geq|\na|\na f||\qquad\mbox{in}\quad M,
\]
so that $|\na f|$ is constant everywhere. 
Clearly, it cannot be $|\na f|=0$, because $f$ is noncostant. 
Thus, $|\na f|>0$ on $M$ and, in turn, 
the function $f$ can be regarded as a coordinate everywhere and locally one can choose a system of coordinates
$\{f,\theta^1,\ldots,\theta^{n-1}\}$ such that the metric $g$ can be written as
\[
g=\frac{df\otimes df}{|\na f|^2}+g_{ij}(f,\theta)\,d\theta^i\!\otimes d\theta^j.
\]
To obtain the product structure of $g$, it is enough to show that $\pa g_{ij}/\pa f=0$, for $i,j=1,\ldots,n-1$. 
Indeed, setting $dt=\pm\,df/|\na f|$ (depending on whether $\na f$ on $\pa M$ is inward or outward pointing) and normalizing in such a way that the level set $\{t=0\}$ corresponds to $\pa M$, this would imply that
\[
g
=dt\otimes dt+g_{|\pa M}.
\]
Note that this expression tells us that the function $t$ represents the distance to the boundary.
The fact that $g_{ij}$ is independent of $f$ is a consequence of the basic computation
\begin{align*}
0=\na_{ij}^2f
&=
\frac{\pa^2f}{\pa\theta^i\pa\theta^j}-\Gamma_{ij}^f\frac{\pa f}{\pa f}-\Gamma_{ij}^k\frac{\pa f}{\pa\theta^k}=-\Gamma_{ij}^f\\
&=
-\frac{g^{ff}}2\left(\frac{\pa g_{if}}{\pa\theta^j}+\frac{\pa g_{jf}}{\pa\theta^i}-\frac{\pa g_{ij}}{\pa f}\right)
=-\frac{|\na f|^2}2\frac{\pa g_{ij}}{\pa f}.
\end{align*}
This finishes the proof of (i).

To show that the second part of the statement hods true,
we denote by $\bar g$ the metric $g_{|\pa M}$ and we use the same notation for all the Riemannian quantities related to $\bar g$.
We first prove that $\pa M$ is a space form, that is $\overline\EEE=0$ and $\overline\WWW=0$, according to (\ref{eq:Rm_decomp})-(\ref{eq:R_decomp_T}). 
To check that the condition
$\stackrel{\circ}{\overline\Ric}=0$, which is equivalent to $\overline\EEE=0$, is satisfied, let us use the coordinates $\{t,\theta^1,\ldots,\theta^{n-1}\}$ and note that the Gauss equation (\ref{eq:Gauss_Ricci})
reduces to 
\[
\RRR_{ij}=\overline\RRR_{ij}.
\]
Indeed, it is easy to verify that the product structure of $g$ induces a splitting of $\Rm$ as well, so that $\RRR_{itjt}=0$. Moreover, since $\na^2f\equiv0$, then $h\equiv0$ on $\pa M$.
Now, using equation (\ref{eq:tilde_Ric_f}) on $\pa M$, we obtain that for every $i,j=1,\ldots,n-1$, 
\begin{equation}
\label{eq:per_E=0_1}
\overline\RRR_{ij}=-\na^2_{ij}f-\frac{\pa_if\pa_jf}{n-2}+\frac{|\na f|^2}{n-2}g_{ij}
=
\frac{|\na f|^2}{n-2}\bar g_{ij},
\qquad\mbox{on }\ \pa M.
\end{equation}
Taking the trace, we have that indeed 
\begin{equation}
\label{eq:per_E=0_2}
\frac{\overline\RRR}{n-1}=\frac{|\na f|^2}{n-2},
\end{equation}
and in turn, using again (\ref{eq:per_E=0_1}), we have that
$\stackrel{\circ}{\overline\Ric}=0$.
 
To see that $\overline\WWW=0$, observe first that from the decomposition (\ref{eq:Rm_decomp}) applied to $\overline\Rm$ and from the definition of $\overline\SSS$, using $\overline\EEE=0$ we get
\begin{equation}
\label{eq:per_E=0_3}
\overline\WWW_{ijkl}=\RRR_{ijkl}-\frac{\overline\RRR}{(n-1)(n-2)}\bigg(\bar g_{ik}\bar g_{jl}-\bar g_{il}\bar g_{jk}\bigg).
\end{equation}
Observe that we have also used the identity $\overline\RRR_{ijkl}=\RRR_{ijkl}$, which follows from the Gauss equation (\ref{eq:Gauss_Riemann}) and from the fact that $h=0$.
Now, since $\WWW=0$, again from the decomposition of the Riemann tensor and from the definition of $\SSS$ we have 
\begin{equation*}
\RRR_{ijkl}
=
\frac{\RRR}{n(n-1)}\bigg(g_{ik}g_{jl}-g_{il}g_{jk}\bigg)+\EEE_{ijkl}
=
\frac{\overline\RRR}{n(n-1)}\bigg(\bar g_{ik}\bar g_{jl}-\bar g_{il}\bar g_{jk}\bigg)+\EEE_{ijkl}.
\end{equation*}
Note that in the second equality we have used the fact that $\RRR=\overline\RRR$, which is a consequence of (\ref{eq:Gauss_R}) coupled with $\RRR_{tt}=0$.
The last identity and equation (\ref{eq:per_E=0_3}) imply that
\begin{equation}
\label{eq:per_E=0_5}
\overline\WWW_{ijkl}
=
-\frac{2\,\overline\RRR}{n(n-1)(n-2)}\bigg(\bar g_{ik}\bar g_{jl}-\bar g_{il}\bar g_{jk}\bigg)+\EEE_{ijkl}.
\end{equation}
At the same time, equations (\ref{eq:per_E=0_1})-(\ref{eq:per_E=0_2}) yield
$
\TTT_{ij}=\big(\,\overline\RRR/n(n-1)\big)g_{ij},
$
and in turn
\[
\EEE_{ijkl}
=
\frac{2\,\overline\RRR}{n(n-1)(n-2)}\bigg(\bar g_{ik}\bar g_{jl}-\bar g_{il}\bar g_{jk}\bigg).
\]
Substituting this expression into (\ref{eq:per_E=0_5}) gives $\overline\WWW=0$. Hence, $(\pa M,\bar g)$ is a space form. 

In view of identity (\ref{eq:per_E=0_2}), which gives $\overline\RRR>0$, we can finally deduce that $(\pa M,\bar g)$ is locally isometric to a constant curvature sphere. 
\end{proof}

We now proceed with the proofs of the main theorems stated in the introduction. We start with the proof of Theorem \ref{thm:Lore_Virgi}, which treats the problem in the exterior domain with the integral overdetermining condition (\ref{eq:cond_Lore_Virgi}). 

\begin{proof}[Proof of Theorem \ref{thm:Lore_Virgi} (after Proposition
\ref{prop:int_part_1})]
Let $u$ be a solution to problem (\ref{eq:pb}) and let $0<c\leq1$ be such that the hypotheses of the theorem are satisfied.

Let us consider the metric $g$ defined in (\ref{eq:tilde_g}) and the function $f=\log u$, so that the system (\ref{eq:pb_reform}) is satisfied in $M=\R^n\setminus\Om$. According to (\ref{eq:ipotesi_riscritta}), the hypothesis (\ref{eq:cond_Lore_Virgi}) is equivalent to
\[
\int\limits_{\{f=\log c\}}|\na f|_g^2\,\HHH_g\,\rmd\sigma_g\leq0.
\]
Therefore, we can use Proposition \ref{prop:int_part_1} to deduce that $\na^2 f\equiv0$ in $\{f\leq\log c\}$. At the same time, since the metric $g$ is by construction globally conformally equivalent to the flat metric $g_{\R^n}$, we have the for $n\geq4$ the corresponding Weyl tensor $\WWW_g$ vanishes. As for the case $n=3$, we recall that $\WWW_g$ is not present in the decomposition (\ref{eq:Rm_decomp}). Hence, we can apply Theorem \ref{prop:LCF_sfera} and obtain that the Riemannian manifold $\big(\{f=\log c\},g_{|\{f=\log c\}}\big)$, or equivalently $\big(\{u=c\},g_{|\{u=c\}}\big)$, 
is locally isometric to a constant curvature sphere.  
Since the conformal factor which relates $g_{\R^n}$ and $g$ is a function of $u$, it is immediate to see that the metric $g_{\R^n}$ restricted to $\{u=c\}$ is a constant multiple of $\bar g=g_{|\{u=c\}}$. Hence, $\big(\{u=c\},{g_{\R^n}}_{|\{u=c\}}\big)$ is 
locally isometric to a constant curvature sphere as well. 
Observe that from $\na^2 f\equiv 0$ in $\{u\leq c\}$ we deduce 
that $|\na f|_g$ is a positive constant in the same region, 
so that all the level sets $\{u=t\}$, for $t\leq c$, 
are regular and diffeomorphic to each other. 
In particular, in view of Remark \ref{rm:diffeo},
the level sets are all diffeomorphic to $(n-1)$-dimensional spheres.
Therefore, $\big(\{u=c\},{g_{\R^n}}_{|\{u=c\}}\big)$ is 
indeed globally isometric to a constant curvature sphere, hence
we can set $\{u=c\}=\pa B_{r_c}$, for some $r_c>0$.

Let us now show that $u$ is rotationally symmetric in $\R^n\setminus B_{r_c}$. 
To do this, we note that the function 
$
v(x):=c\,(r_c/|x|)^{n-2}
$
satisfies the conditions
\begin{equation*}
\left\{
\begin{array}{rcll}
\displaystyle
\De v\!\!\!\!&=&\!\!\!\!0 & {\rm in }\quad\R^n\setminus\overline B_{r_c},\\
\displaystyle
  v\!\!\!\!&=&\!\!\!\!c  &{\rm on }\ \ \pa B_{r_c},\\
\displaystyle
v(x)\!\!\!\!&\to&\!\!\!\!0 & \mbox{as }\ |x|\to\infty.
\end{array}
\right.
\end{equation*}
On the other hand, the function $u$ solves the same problem.
Hence, by the Maximum Principle, $u=v$ in $\R^n\setminus B_{r_c}$ and in turn $u$ is rotationally symmetric in the same set. 

Finally, we prove that
$u$ is rotationally symmetric on the whole $\R^n\setminus\Om$. To this end, observe that the function $v$ previously introduced is defined and analytic all over $\R^n\setminus\{0\}$. Thus, since $u$ is analytic as well in $M$ and coincides with $v$ on an open subset of $M$, they must coincide everywhere. In particular, $u$ is rotationally symmetric in $\R^n\setminus\Om$ and $\pa\Om=\pa B_{r_0}$, 
with $r_0=r_c\,c^{1/(n-2)}$.  
\end{proof}

Now, we proceed with the proof of Theorem \ref{thm:Lore_Virgi_delta},
which pertains to the problem in the interior punctured domain with the integral condition (\ref{eq:cond_Lore_Virgi_delta}).

\begin{proof}[Proof of Theorem \ref{thm:Lore_Virgi_delta} (after Proposition \ref{prop:int_part_2})]
Let $u$ be the unique solution of problem (\ref{eq:pb_delta}).
In order to apply Proposition \ref{prop:int_part_2} to the function $f=\log u$, we need to check that the condition 
\begin{equation}
\label{eq:ancora_f}
\int\limits_{\{f=\log c\}}\!\!\!|\na f|_g^3\,{\rm d}\sigma_g
\,
=
\,\,
\limsup_{t\to\infty}\!\!\!\int\limits_{\{f=\log t\}}\!\!\!|\na f|_g^3\,{\rm d}\sigma_g
\end{equation}
is satisfied. In terms of $u$, the above condition is equivalent to 
\begin{equation}
\label{eq:cond_su_c}
\int\limits_{\{u=c\}}\frac{|{\rm D}u|^2}{u^{2\frac{n-1}{n-2}}}|\DDD u|\,{\rm d}\sigma
\,
=
\,\,
\limsup_{t\to+\infty}\int\limits_{\{u=t\}}\frac{|{\rm D}u|^2}{u^{2\frac{n-1}{n-2}}}|\DDD u|\,{\rm d}\sigma.
\end{equation}
Note that the limsup on the right-hand side is indeed a limit and is independent of the Dirichlet data. 
To see this, let us observe that, in view of Lemma \ref{lem:delta_behaves}, the solution $u$ can be written as
\[
u(x)=\frac{d|\pa\Om|}{(n-2)|\Sph^{n-1}|}|x|^{2-n}+v(x), 
\]
for some function $v$ harmonic in $\Om$.
In particular, from this representation one can deduce that 
the identity
\begin{equation*}
\int\limits_{\{u=t\}}|\DDD u|\rmd\sigma
=
\int\limits_{\pa\Om}|\DDD u|\rmd\sigma
=
d\,|\pa\Om|
\end{equation*}
holds for every $t\geq c$, and that 
\begin{equation*}
\lim_{x\to0}
\frac{|\DDD u|}{u^{\frac{n-1}{n-2}}}
=
(n-2)^{\frac{n-1}{n-2}}\left(\frac{|\Sph^{n-1}|}{d\,|\pa\Om|}\right)^{\frac1{n-2}}.
\end{equation*}
From the above identity and limit we get
\begin{equation}
\label{eq:calc_limit}
\lim_{t\to+\infty}\int\limits_{\{u=t\}}\frac{|\DDD u|^2}{u^{2\frac{n-1}{n-2}}}|\DDD u|\,{\rm d}\sigma
=
(n-2)^{\frac{2(n-1)}{n-2}}\left(\frac{|\Sph^{n-1}|}{d\,|\pa\Om|}\right)^{\frac2{n-2}}
\int\limits_{\pa\Om}|\DDD u|\,{\rm d}\sigma,
\end{equation}
which is a quantity that does not depend on the constant value of $u$ on $\Om$. Therefore, we have that condition (\ref{eq:cond_su_c})
is fulfilled if $c=c_1$, with 
\begin{equation*}
c_1:=
\left(
\frac{\displaystyle\int_{\pa\Om}|\DDD u|^3\,\rmd\sigma}
{\displaystyle\lim_{t\to+\infty}\int\limits_{\{u=t\}}\frac{|\DDD u|^2}{u^{2\frac{n-1}{n-2}}}|\DDD u|\,{\rm d}\sigma}
\right)^{\frac{n-2}{2(n-1)}}\!\!\!\!.
\end{equation*}
To stress the fact that $c_1$ only depends on $n$, $\Om$ and $d$, 
let us denote by $u_0$ the solution to problem (\ref{eq:pb_delta}) with homogeneous Dirichlet boundary conditions and use (\ref{eq:calc_limit}) to write 
\begin{equation}
\label{def:c_n_1}
c_1=c_1(n,\Om,d)=
\frac1{(n-2)}\left(\frac{|\pa\Om|}{|\Sph^{n-1}|}\right)^{\frac1{n-1}}
\frac{\displaystyle\left(\fint_{\pa\Om}|\DDD u_0|^3\,\rmd\sigma\right)^{\frac{n-2}{2(n-1)}}}
{\displaystyle\left(\fint_{\pa\Om}|\DDD u_0|\,\rmd\sigma\right)^{\frac{n-4}{2(n-1)}}}\,\cdot
\end{equation}
By this discussion, we have that, up to an additive constant, it is always possible to suppose that $u=c_1$ on $\pa\Om$, so that  
(\ref{eq:cond_su_c}) and in turn (\ref{eq:ancora_f}) are satisfied. 
Observe that this assumption is not restrictive, because
adding a constant does not affect condition (\ref{eq:cond_Lore_Virgi_delta}).
Moreover, it is a matter of computation to check that, in this normalized setting, condition (\ref{eq:cond_Lore_Virgi_delta}) can be rewritten as
\begin{equation*}
\int\limits_{\{u=c_1\}}|\DDD u|^2
\left[\frac{\HHH}{n-1}-\frac{|\DDD u|}{(n-2)\,u}\right]\rmd\sigma\geq0
\end{equation*}
and in turn, by (\ref{eq:ipotesi_riscritta}), as
\[
\int\limits_{\{f=\log c_1\}}|\na f|_g^2\,\HHH_g\,\rmd\sigma_g\geq0.
\]
Therefore, both conditions of hypothesis (\ref{eq:cond_curv_media_2})
in Proposition \ref{prop:int_part_2} are satisfied.
Hence, by analogy with the proof of Theorem \ref{thm:Lore_Virgi}, 
we can apply Proposition \ref{prop:int_part_2} and Theorem \ref{prop:LCF_sfera} to deduce that  
the Riemannian manifold $\big(\pa\Om,{g_{\R^n}}_{|\pa\Om}\big)$, 
is isometric to a constant curvature sphere and that
$u$ is rotationally symmetric.  
\end{proof}

We now pass to consider the problem in the exterior domain (\ref{eq:pb}) with pointwise over determining conditions. 

\begin{proof}[Proof of Theorem \ref{thm:nabla_const} (after Proposition \ref{prop:const_hyp})]
Let $u$ be a solution to problem (\ref{eq:pb})  and let $g$ be the metric defined in (\ref{eq:tilde_g}). Setting $f=\log u$, we have that the system (\ref{eq:pb_reform}) is satisfied in $M=\R^n\setminus\Om$. According to (\ref{eq:formula_H_H_g}) and (\ref{eq:P_function}), the hypothesis (\ref{eq:hyp_nabla_const}) is equivalent to
\begin{equation*}
|\na f|_g=\frac d{c^{\frac{n-1}{n-2}}}\ \ \mbox{ on $\ \{f=\log c\}$}\qquad\mbox{ and }\qquad\inf_{\{f=\log c\}}\HHH_g\leq0.
\end{equation*}
Thus, Proposition \ref{prop:const_hyp} yields $\na^2f\equiv0$ in $\{f\leq\log c\}$.
Finally, arguing as in Theorem \ref{thm:Lore_Virgi}, we can deduce, via Theorem \ref{prop:LCF_sfera}, that 
$\big(\pa\Om,{g_{\R^n}}_{|\pa\Om}\big)$ is isometric to a spherical space form and that $u$ is rotationally symmetric.
\end{proof}

We continue with the proof of Theorem
\ref{thm:nabla_const_delta},
which deals with the problem in the interior punctured domain with the overdetermining pointwise condition (\ref{eq:hyp_nabla_const_delta}).

\begin{proof}[Proof of Theorem \ref{thm:nabla_const_delta} (after Proposition \ref{prop:const_hyp_2})]

Let $u$ be the unique solution of problem (\ref{eq:pb_delta})
satisfying $|\DDD u|=d$ on $\pa\Om$.
In order to apply Proposition \ref{prop:const_hyp_2} to the function $f=\log u$, we need to check that the condition 
\begin{equation*}
|\na f|_g
\,
\equiv\,\,
\max_M|\na f|_g\ \ \mbox{ on $\ \pa M$}
\end{equation*}
is satisfied. 
To do this, it is sufficient to prove that
\begin{equation}
\label{eq:ancora_nabla_f}
|\na f|_g^2
\equiv
\limsup_{x\to0}|\na f|_g^2\qquad\mbox{on }\ \pa M.
\end{equation}
Indeed, from equation (\ref{eq:Bochner_g}) we have 
\begin{equation*}
\Delta_g|\na f|_g^2+\big\langle\na|\na f|^2_g\,\big|\,\na f\big\rangle_g\geq0,
\end{equation*}
so that, by the Maximum Principle and by (\ref{eq:ancora_nabla_f}), we have $\max_{\pa M}|\na f|_g^2=\max_M|\na f|_g^2$.
In terms of $u$, condition (\ref{eq:ancora_nabla_f}) is equivalent to 
\begin{equation}
\label{eq:cond_su_c_2}
\frac{|\DDD u|^2}{u^{2\frac{n-1}{n-2}}}
\,\equiv\,\,
\limsup_{x\to0}\frac{|\DDD u|^2}{u^{2\frac{n-1}{n-2}}}
\qquad\mbox{on }\ \pa\Om.
\end{equation}
As seen in the proof of Theorem \ref{thm:Lore_Virgi_delta},
Lemma \ref{lem:delta_behaves} implies that the $\limsup$ in this condition is indeed a limit and coincides with 
$(n-2)^{2\frac{n-1}{n-2}}(|\Sph^{n-1}|/d\,|\pa\Om|)^{\frac2{n-2}}$.
Therefore, the above condition holds true if $u=c_2$ on $\pa\Om$, with 
\begin{equation}
\label{def:c_n_2}
c_2=c_2(n,\Om,d):=\frac d{n-2}\left(\frac{|\pa\Om|}{|\Sph^{n-1}|}\right)^{\frac1{n-1}}\!\!\!\!.
\end{equation}
Note that, up to an additive constant, it is always possible to suppose that $u=c_2$ on $\pa\Om$, 
because adding a constant does not affect hypothesis (\ref{eq:hyp_nabla_const_delta}). 
Moreover, by this choice of the Dirichlet boundary data, simple computations show that the second condition in hypothesis (\ref{eq:hyp_nabla_const_delta}) is equivalent to
\[
0\,\leq\,
\sup_{\pa\Om}\frac{\HHH}{n-1}-
\frac{|\DDD u|}{(n-2)\,u}
\,=\,
\sup_{\pa\Om}\bigg[\frac{\HHH}{n-1}-\frac{|\DDD u|}{(n-2)\,u}\bigg].
\]
In turn, using (\ref{eq:formula_H_H_g}), this is equivalent to
$\sup_{\pa M}\HHH_g\geq0$. 
Therefore, we are in the position to use Proposition \ref{prop:const_hyp_2}, which yields $\na^2f\equiv0$ in $M$.
Finally, arguing as in Theorem \ref{thm:Lore_Virgi}, we can deduce, via Theorem \ref{prop:LCF_sfera}, that 
$\big(\pa\Om,{g_{\R^n}}_{|\pa\Om}\big)$ is isometric to a spherical space form and that $u$ is rotationally symmetric.
\end{proof}

We now present the proof of the result for annular domains.

\begin{proof}[Proof of Theorem \ref{thm:2bordi}]
Let $u$ be the solution to either problem (\ref{eq:pb}) or problem (\ref{eq:pb_delta}). 
As usual, we set $f=\log u$.
Observe that, via identity (\ref{eq:ipotesi_riscritta}),
the (pointwise) conditions satisfied by $u$ on the level sets $\{u=a\}$
and $\{u=b\}$ imply that 
\begin{equation*}
\int\limits_{\{f=\log a\}}\!\!|\na f|_g^2\,\HHH_g\,{\rm d}\sigma_g\,\geq\,0
\qquad\mbox{and}\qquad
\int\limits_{\{f=\log b\}}\!\!|\na f|_g^2\,\HHH_g\,{\rm d}\sigma_g\,\leq\,0,
\end{equation*}
where $0<a<b$.
Therefore, a straightforward application of Corollary \ref{cor:int_pimo} with $\varphi(f)=f$ gives that 
$\na^2f\equiv0$ in 
$\{\log a\leq f\leq \log b\}$. 
Hence, $|\na f|_g$ is a positive constant in the same region, 
so that all the level sets $\{u=t\}$, for $a\leq t\leq b$, 
are regular, diffeomorphic to each other and in turn
connected by hypothesis.
The conclusion then follows by the same arguments employed in the previous proofs.
The only difference amounts to observing that a local version of 
Theorem~\ref{prop:LCF_sfera} holds, with a finite interval in place of the half straight line $[0,+\infty)$.
\end{proof}

We conclude this section with the proof of 
Corollary \ref{cor:cond_glob}. 

\begin{proof}[Proof of Corollary \ref{cor:cond_glob}]
Let us first note that, by the coarea formula and 
by the first identity in (\ref{eq:formula_curvature}), 
we have
\begin{equation}
\label{eq:id_cor_1.2}
\int\limits_{\R^n\setminus\Om}|\DDD u|^3
\bigg(\,\frac{\DDD^2u(\DDD u,\DDD u)}{(n-1)|\DDD u|^3}
-\frac{|\DDD u|}{(n-2)u}\,\bigg)\rmd\mu
=
\int\limits_0^1\!\int\limits_{\{u=c\}}\!|\DDD u|^2
\bigg(\frac{\HHH}{n-1}
-\frac{|\DDD u|}{(n-2)u}\bigg)\rmd\sigma\,\rmd c,
\end{equation}
where we observe that the slice integral
\[
\int\limits_{\{u=c\}}\!|\DDD u|^2
\bigg(\frac{\HHH}{n-1}
-\frac{|\DDD u|}{(n-2)u}\bigg)\rmd\sigma
\]
is defined for a.e.\ $0<c<1$, in view of Sard's Lemma.
Therefore, if the left-hand in \eqref{eq:id_cor_1.2} is nonpositive, 
we obtain that condition (\ref{eq:cond_Lore_Virgi}) of 
Theorem \ref{thm:Lore_Virgi} is satisfied for some
regular value $c\in(0,1]$, 
so that the conclusion holds.
Now, observing that the quantities $\DDD^2u(\DDD u,\DDD u)$
and $|\DDD u|^4/u$ are integrable in $\R^n\setminus\overline{\Om}$
thanks to Lemmata \ref{lem:u_behaves} and \ref{lem:u_deriv_behaves},
an application of the Divergence Theorem gives
\begin{equation*}
\int\limits_{\R^n\setminus\Om}
\!\DDD^2u(\DDD u,\DDD u)\,\rmd\mu
=
\frac12\int\limits_{\pa\Om}|\DDD u|^3\,\rmd\sigma.
\end{equation*}
Note that we have also use the fact that 
$\Delta u=0$ in $\R^n\setminus\overline{\Om}$.
In view of the previous identity, the non positivity
of the left hand side in (\ref{eq:id_cor_1.2}) translates into
\begin{equation*}
\int\limits_{\{u=1\}}\frac{|\DDD u|^3}u\rmd\sigma
\,=\,
\int\limits_{\pa\Om}|\DDD u|^3\rmd\sigma
\,\leq\,
2\,
\Big(\frac{n-1}{n-2}\Big)
\!\int\limits_{\R^n\setminus\Om}\!\frac{|\DDD u|^4}u\rmd\mu
\,=\,
2\,
\Big(\frac{n-1}{n-2}\Big)
\int\limits_0^1\!\int\limits_{\{u=c\}}\!
\frac{|\DDD u|^3}u\,\rmd\sigma\,\rmd c,
\end{equation*}
where in the second equality we have again used the coarea formula.
Recalling that
\begin{equation*}
\Phi(c)=\int\limits_{\{u=c\}}\!\frac{|\DDD u|^3}u\,\rmd\sigma, 
\end{equation*}
the previous inequality coincides with the condition 
in (\ref{eq:cond_Phi}) and the thesis follows.
\end{proof}


\renewcommand{\theequation}{A-\arabic{equation}}
\renewcommand{\thesection}{A}
\setcounter{equation}{0}  
\setcounter{theorem}{0}  
\section*{Appendix: asymptotic behaviour of the solutions}  

In this section we investigate the asymptotic behaviour of solutions to problem (\ref{eq:pb}) and (\ref{eq:pb_delta}). The estimates presented here are well known, since they can be deduced by standard arguments in the theory of elliptic equations. However, for sake of completeness, we provide the proofs.
We start with the following upper and lower bounds for solutions to problem (\ref{eq:pb}).

\begin{lemma}
\label{lem:u_behaves}
If $u$ is a solution to problem (\ref{eq:pb}), then there exists $r_0>0$
and two positive constants $C_1<C_2$ such that
\begin{equation}
\label{eq:u_cresc}
C_1|x|^{2-n}\leq u(x)\leq C_2|x|^{2-n},\qquad\mbox{for every  }\quad|x|\geq r_0.
\end{equation}
\end{lemma}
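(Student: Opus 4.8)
The plan is to derive the bound \eqref{eq:u_cresc} from the Maximum Principle by comparing $u$ with suitable multiples of the fundamental solution $|x|^{2-n}$. First I would fix $r_0>0$ large enough that $\overline{\Om}\subset B_{r_0}$, so that $u$ is a bounded, positive harmonic function on $\R^n\setminus \overline{B_{r_0}}$, strictly positive and continuous on the compact sphere $\pa B_{r_0}$, and vanishing at infinity. Set $m:=\min_{\pa B_{r_0}}u>0$ and $M:=\max_{\pa B_{r_0}}u<\infty$; both are finite and positive because $u$ is continuous and positive (by the strong maximum principle applied to the solution of \eqref{eq:pb}, which satisfies $0<u<1$). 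The natural barriers are the radial harmonic functions $\underline{v}(x):=m\,r_0^{\,n-2}|x|^{2-n}$ and $\overline{v}(x):=M\,r_0^{\,n-2}|x|^{2-n}$, both harmonic in $\R^n\setminus\{0\}$, tending to $0$ at infinity, with $\underline{v}\le u\le \overline{v}$ on $\pa B_{r_0}$.

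The key step is then a routine application of the Maximum Principle on the exterior region $\R^n\setminus \overline{B_{r_0}}$: the difference $u-\underline{v}$ is harmonic there, nonnegative on the inner boundary $\pa B_{r_0}$, and tends to $0$ at infinity, hence nonnegative throughout by the minimum principle for harmonic functions on exterior domains (equivalently, after a Kelvin transform, on a bounded punctured domain, where the singularity at the origin is removable since the transformed function is bounded). The same argument applied to $\overline{v}-u$ gives the upper bound. This yields \eqref{eq:u_cresc} with $C_1:=m\,r_0^{\,n-2}$ and $C_2:=M\,r_0^{\,n-2}$, and $C_1<C_2$ unless $u$ is already radially constant on $\pa B_{r_0}$, in which case one may enlarge $C_2$ slightly to force the strict inequality.

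The only genuine subtlety — and the step I would be most careful about — is justifying the comparison principle on the unbounded domain $\R^n\setminus \overline{B_{r_0}}$, i.e.\ ruling out a nonnegative harmonic function on the exterior domain that vanishes at infinity but dips negative in the interior. For $n\ge 3$ this is standard: either invoke the maximum principle for the exterior domain directly (a harmonic function on an exterior domain that is $\ge 0$ on the compact boundary and has limit $0$ at infinity is $\ge 0$, since for any $\ep>0$ the function $u-\underline v+\ep|x|^{2-n}$ is $\ge 0$ on $\pa B_{r_0}$ and at infinity, hence $\ge 0$ on an annulus $B_R\setminus B_{r_0}$ for all large $R$ by the ordinary maximum principle, and then let $R\to\infty$ and $\ep\to 0$), or perform a Kelvin transform to reduce to the bounded case. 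Everything else is elementary, so the proof is short; the write-up just needs to state the barrier functions, verify the boundary inequalities on $\pa B_{r_0}$, and cite the exterior maximum principle.
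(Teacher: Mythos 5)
Your proposal is correct and follows essentially the same route as the paper: the paper also compares $u$ with the radial barriers $C_1|x|^{2-n}$ and $C_2|x|^{2-n}$, with $C_1=r_0^{n-2}\min_{\pa B_{r_0}}u$ and $C_2=r_0^{n-2}\max_{\pa B_{r_0}}u$, and concludes by the maximum principle on the exterior domain using the decay at infinity. Your extra care in justifying the exterior-domain comparison (via the $\ep|x|^{2-n}$ perturbation or Kelvin transform) is a welcome filling-in of a step the paper leaves as ``a simple argument based on the Strong Maximum Principle.''
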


\begin{proof}
Note first that $0<u<1$ on $\R^n\setminus\overline\Om$, in view of the Strong Maximum Principle. 
Let then $r_0>0$ be such that
$\Om\subset\subset B_{r_0}$, and set
\[
C_1:=r_0^{n-2}\min_{\pa B_{r_0}}u,\qquad\qquad C_2:=r_0^{n-2}\max_{\pa B_{r_0}}u.
\]
We define for $r>0$
\[
u^-(x):=C_1|x|^{2-n},\qquad\qquad
u^+(x):=C_2|x|^{2-n}.
\]
Observe that $(u^--u)\leq0$ on $\pa B_{r_0}$ and that $(u^--u)\to0$ as $|x|\to\infty$. Since $\Delta(u^--u)=0$ in $\R^n\setminus B_{r_0}$,  a simple argument based on the Strong Maximum Principle shows that $(u^--u)\leq0$ in $\R^n\setminus B_{r_0}$. Similarly, we have that $(u^+-u)\geq0$ on $\R^n\setminus B_{r_0}$.
\end{proof}

Building on the previous lemma, one can derive upper bounds for the
first and the second derivatives of $u$.

\begin{lemma}
\label{lem:u_deriv_behaves}
If $u$ is a solution to problem (\ref{eq:pb}), then there exists $r_0>0$
and two positive constants $C_3$ and $C_4$ such that
\begin{equation}
\label{eq:u_deriv_cresc}
|\DDD u(x)|\leq C_3|x|^{1-n}\quad\mbox{ and }\quad 
|\DDD^2u(x)|\leq C_4|x|^{-n},
\qquad\mbox{for every  }\quad|x|\geq2r_0.
\end{equation}
\end{lemma}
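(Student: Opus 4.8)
The plan is to use interior elliptic estimates on dyadic annuli combined with the pointwise bounds on $u$ obtained in Lemma \ref{lem:u_behaves}, and then exploit scaling. First I would fix $r_0>0$ as in Lemma \ref{lem:u_behaves}, so that $C_1|x|^{2-n}\leq u(x)\leq C_2|x|^{2-n}$ for $|x|\geq r_0$. Given any point $x$ with $|x|\geq 2r_0$, set $\rho:=|x|/2$ and consider the ball $B_\rho(x)$, which is contained in the region $\{|y|\geq r_0\}$ where $u$ is harmonic and where the upper bound $u(y)\leq C_2|y|^{2-n}\leq C_2\,(\rho)^{2-n}$ (up to a dimensional constant, since $|y|\geq\rho$ on $B_\rho(x)$) holds.

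The key step is to rescale: define $u_x(z):=u(x+\rho z)$ for $z\in B_1(0)$. Then $u_x$ is harmonic in $B_1(0)$ and satisfies $0\leq u_x\leq C\,\rho^{2-n}$ there, for a constant $C$ depending only on $n$, $\Om$, $C_2$. By standard interior gradient and Hessian estimates for harmonic functions (e.g. \cite[Theorem 2.10]{Gil_Tru_book} applied on $B_{1/2}$, or simply the mean-value property differentiated), one has $|\DDD u_x(0)|\leq C'\sup_{B_1}|u_x|$ and $|\DDD^2 u_x(0)|\leq C''\sup_{B_1}|u_x|$. Undoing the scaling gives $|\DDD u(x)|=\rho^{-1}|\DDD u_x(0)|\leq C'\rho^{-1}\rho^{2-n}=C_3|x|^{1-n}$ and $|\DDD^2 u(x)|=\rho^{-2}|\DDD^2 u_x(0)|\leq C_4|x|^{-n}$, with $C_3,C_4$ depending only on $n$, $\Om$, and the constant $C_2$ from Lemma \ref{lem:u_behaves}. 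Since $x$ with $|x|\geq 2r_0$ was arbitrary, the two estimates in \eqref{eq:u_deriv_cresc} follow.

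There is essentially no serious obstacle here; the only point requiring a little care is making sure the ball $B_\rho(x)$ of radius $\rho=|x|/2$ stays inside the harmonic region $\{|y|>r_0\}$ (which is why we require $|x|\geq 2r_0$ rather than $|x|\geq r_0$) and that the upper bound for $u$ on this ball is comparable to $|x|^{2-n}$ — both are immediate from $|y|\geq |x|-\rho=\rho=|x|/2$ for $y\in B_\rho(x)$. One could alternatively invoke interior Schauder estimates, but for a harmonic function the elementary derivative bounds from the Poisson kernel suffice and keep constants explicit.
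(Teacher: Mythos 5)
Your proposal is correct and follows essentially the same route as the paper: both combine the decay bound $u\leq C_2|x|^{2-n}$ from Lemma \ref{lem:u_behaves} with the standard interior derivative estimates for harmonic functions on balls of radius comparable to $|x|$. The only cosmetic difference is that the paper derives those derivative estimates by hand from the mean-value property (obtaining $|\pa_\alpha u(x)|\leq \tfrac{n}{\rho}u(x)$ via the divergence theorem and the positivity of $u$), whereas you invoke them in rescaled form on the unit ball.
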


\begin{proof}
First note that, since $u$ is harmonic, we have in particular that the partial derivative $\pa_{\alpha}u$ of $u$ is harmonic as well,
for every $\alpha=1,...,,n$. Hence, by the Mean Value Property, we have that for every $x\in\R^n\setminus\overline\Om$ and every $\rho>0$ such that $B_{\rho}(x)\subset\subset\R^n\setminus\overline\Om$,  
\[
\pa_{\alpha}u(x)=
\frac1{\omega_n\rho^n}\int\limits_{B_{\rho}(x)}\pa_{\alpha}u\,\rmd\mu=
\frac1{\omega_n\rho^n}\int\limits_{\pa B_{\rho}(x)}u\,\langle\nu|\pa_{\alpha}\rangle\,\rmd\sigma,
\]
where $\nu$ is the outer unit normal on $\pa B_{\rho}(x)$.
Therefore, by the positivity of $u$, we get
\begin{equation}
\label{eq:from_pos_u}
|\pa_{\alpha}u(x)|\leq\frac1{\omega_n\rho^n}\int\limits_{\pa B_{\rho}(x)}u\,\rmd\sigma=\frac n{\rho}u(x),
\end{equation}
where in the equality we have used the fact that 
$\int_{\pa B_{\rho}(x)}u\,\rmd\sigma=n\omega_n\rho^{n-1}u(x)$.
In particular, fixing $r_0>0$ such that $\Om\subset\subset B_{r_0}$,
we obtain 
\[
|\pa_{\alpha}u(x)|\leq\frac n{|x|-r_0}u(x)\leq
\frac{nC_2|x|^{2-n}}{|x|-r_0}
\leq2nC_2|x|^{1-n},
\]
for every $|x|\geq2r_0$,
where in the second inequality we have used Lemma \ref{lem:u_behaves}.
Hence, there exists  $C_3>0$ such that the first of (\ref{eq:u_deriv_cresc}) holds true.

To obtain the second estimate in (\ref{eq:u_deriv_cresc}), 
we proceed as before and  we get  
\[
\pa_{\alpha}\pa_{\beta}u(x)=
\frac1{\omega_n\rho^n}\int\limits_{\pa B_{\rho}(x)}\pa_{\alpha}u\,\langle\nu|\pa_{\beta}\rangle\,\rmd\sigma,
\]
for every $x\in\R^n\setminus\overline\Om$ and every $\rho>0$ such that $B_{\rho}(x)\subset\subset\R^n\setminus\overline\Om$.
Thus,
\begin{equation*}
|\pa_{\alpha}\pa_{\beta}u(x)|\leq
\frac{n}{\omega_n\rho^{n+1}}\int\limits_{\pa B_{\rho}(x)}u\,\rmd\sigma=\frac{n^2}{\rho^2}u(x),
\end{equation*}
where in the inequality we have used (\ref{eq:from_pos_u}), whereas the equality follows from the Mean Value Property.
In particular, fixing $r_0>0$ as before, we deduce from the above estimate that
\[
|\pa_{\alpha}\pa_{\beta}u(x)|\leq
\frac{n^2}{(|x|-r_0)^2}u(x)\leq\frac{n^2C_2|x|^{2-n}}{(|x|-r_0)^2}
\leq4n^2C_2|x|^{-n},
\]
for every $|x|\geq2r_0$.
Note that the second inequality follows from Lemma \ref{lem:u_behaves}. The last estimate implies that
the second of (\ref{eq:u_deriv_cresc}) holds true for some $C_4>0$.
\end{proof}

\begin{remark}
\label{rm:diffeo}
\upshape
Let $u$ be the solution to problem (\ref{eq:pb}). 
Since $\pa\Om$ is supposed to be smooth,
according to \cite[Lemma 3.4]{Gil_Tru_book} we have that every level set 
$\{u=c\}$, for $c\leq1$ sufficiently close to $1$, is regular.
Also, thanks to Lemmata \ref{lem:u_behaves} and \ref{lem:u_deriv_behaves}, 
for every $\ep>0$ sufficiently small the level set $\{u=\ep\}$ 
is regular and diffeomorphic to a $(n-1)$-dimensional sphere.
Note that it may happen that $\DDD u$ vanishes at some point 
of $\R^n\setminus\overline\Om$,
so that nonregular level sets of $u$ may exist.
However, observe that the set $\big\{c\in(0,1):\{u=c\}\mbox{ is regular}\big\}$
has full Lebesgue measure, in view of Sard's Lemma.
Finally, recall that if the sublevel $\{u< c\}$ satisfies the 
interior sphere condition, then, by \cite[Lemma 3.4]{Gil_Tru_book},
one can conclude that $\{u=c\}$ is regular. 
\end{remark}

We now turn the attention to the solutions of problem (\ref{eq:pb_delta}).
The following lemma describes their asymptotic behaviour.

\begin{lemma}
\label{lem:delta_behaves}
Let $u$ be the solution to the problem
\begin{equation}
\label{pb:delta_appendice}
\left\{
\begin{array}{rcll}
\displaystyle
-\Delta u\!\!\!\!&=&\!\!\!\!d|\pa\Om|\delta_0 & {\rm in }\quad\Om,\\
\displaystyle
  u\!\!\!\!&=&\!\!\!\!c  &{\rm on }\ \ \pa\Om,\\
\end{array}
\right. 
\end{equation}
where $\Om$ is a bounded domain with smooth boundary which contains the origin and $d>0$, $c\!\in\!\R$ are constants.
Then, there exists a unique harmonic function $v$ in $\Om$ such that
\begin{equation*}
u(x)=\frac{d|\pa\Om|}{(n-2)|\Sph^{n-1}|}|x|^{2-n}+v(x),\qquad\quad\mbox{for every $x\in\Om\setminus\{0\}$.}
\end{equation*}
\end{lemma}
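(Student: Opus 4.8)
The plan is to subtract off an explicit singular model and reduce the statement to a removable-singularity argument. First I would recall that, for $n\geq 3$, the function $\Phi(x)=|x|^{2-n}/\big((n-2)|\Sph^{n-1}|\big)$ is the fundamental solution of the Laplacian on $\R^n$, so that, setting
\[
w(x):=\frac{d\,|\pa\Om|}{(n-2)|\Sph^{n-1}|}\,|x|^{2-n},
\]
one has $-\Delta w=d\,|\pa\Om|\,\delta_0$ in the sense of distributions on $\R^n$, hence in particular on $\Om$. Note that $w\in L^1_{\mathrm{loc}}(\R^n)$ because $2-n>-n$, that $w$ is harmonic (indeed smooth) away from the origin, and that the restriction $w_{|\pa\Om}$ is a smooth function on $\pa\Om$, since $\pa\Om$ does not meet the origin.

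Next I would introduce the auxiliary Dirichlet problem $\Delta v=0$ in $\Om$, $v=c-w_{|\pa\Om}$ on $\pa\Om$. As $\Om$ is bounded with smooth boundary and the boundary datum is smooth, classical elliptic theory (Perron's method for existence together with Schauder boundary estimates) provides a unique $v\in C^\infty(\overline\Om)$ solving it; in particular $v$ is harmonic throughout $\Om$, the origin included. Then $\tilde u:=w+v$ satisfies $-\Delta\tilde u=d\,|\pa\Om|\,\delta_0$ in $\mathcal D'(\Om)$ and $\tilde u=c$ on $\pa\Om$, so $\tilde u$ is a solution of \eqref{pb:delta_appendice} of exactly the asserted form.

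It remains to show that any solution $u$ of \eqref{pb:delta_appendice} coincides with $\tilde u$, which is the only genuine point. Both $u$ and $\tilde u$ lie in $L^1_{\mathrm{loc}}(\Om)$ and satisfy the same distributional equation, so their difference $u-\tilde u$ is a distribution on all of $\Om$ with $\Delta(u-\tilde u)=0$ in $\mathcal D'(\Om)$: the two Dirac masses cancel exactly. By Weyl's lemma, $u-\tilde u$ agrees a.e.\ with a function harmonic in all of $\Om$ — in particular the singularity at $0$ is removable — and since $u-\tilde u$ is continuous up to $\pa\Om$ with vanishing boundary trace, the maximum principle forces $u\equiv\tilde u$ in $\Om\setminus\{0\}$. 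Therefore $u=w+v$ with $v$ harmonic in $\Om$, and the uniqueness of $v$ is immediate, as $v=u-w$ is determined by $u$.

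The main obstacle is precisely this last step: one must work with the correct (distributional) notion of solution, so that the Dirac masses cancel and elliptic regularity via Weyl's lemma applies. If instead one only knows a priori that $u$ is a classical solution of $\Delta u=0$ in $\Om\setminus\{0\}$ with $u\to+\infty$ at $0$, the same conclusion can be reached through a standard removable-singularity lemma, after checking — for instance by comparing $u-\tilde u$ with small multiples of $\Phi$ via the maximum principle on punctured balls — that $u-\tilde u$ remains bounded near the origin, which then reduces matters once more to a harmonic function on $\Om$ with zero boundary trace.
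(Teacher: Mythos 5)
Your proposal is correct and follows essentially the same route as the paper: subtract the multiple of the fundamental solution so that the Dirac masses cancel, invoke Weyl's lemma (elliptic regularity) to see that the difference is genuinely harmonic across the origin, and identify it via the maximum principle with the solution of the auxiliary Dirichlet problem with boundary datum $c - w_{|\pa\Om}$. The extra remarks on existence via Perron's method and on the classical removable-singularity variant are fine but not needed beyond what the paper does.
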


\begin{proof}
Let us set
$a_n:=1/(n-2)|\Sph^{n-1}|$.
It is well-known that $-\Delta(a_n|x|^{2-n})=\delta_0$, so that
$-\Delta(u-a_nd|\pa\Om||x|^{2-n})=0$ in $\Om$, in the sense of distributions.
On the other hand, by the classical regularity theory, a distributional solution of this equation is actually smooth in $\Om$. 
By the maximum principle this solution coincides with the unique solution $v$ of
\begin{equation*}
\left\{
\begin{array}{rcll}
\displaystyle
-\Delta v\!\!\!\!&=&\!\!\!\!0 & {\rm in }\ \ \Om,\\
\displaystyle
  v\!\!\!\!&=&\!\!\!\!c-a_nd|\pa\Om||x|^{2-n}  &{\rm on }\ \ \pa\Om.
\end{array}
\right.
\end{equation*}
This completes the proof of the lemma.
\end{proof}


\subsection*{Acknowledgements}
\emph{
V.~A. has received funding from the 
European Research Council under the 
European Union's Seventh Framework Programme (FP7/2007-2013) / ERC grant agreement ${\rm n^o}$ 291053.
L.~M. has been partially supported by the Italian project FIRB 2012 ``Geometria Differenziale e Teoria Geometrica delle Funzioni'' as well as by the SNS project ``Geometric flows and related topics''.
The authors, who are members of the Gruppo Nazionale per
l'Analisi Matematica, la Probabilit\`a e le loro Applicazioni (GNAMPA)
of the Istituto Nazionale di Alta Matematica (INdAM),
would like to thank G. Buttazzo, R. Magnanini, M. Novaga, and 
B. Velichkov for interesting discussions during the preparation 
of this paper.
}


\bibliographystyle{plain}

\end{document}